\documentclass[a4paper,10pt]{article}
\usepackage{a4}
\usepackage{amsfonts}
\usepackage{amsthm}
\usepackage{amssymb}
\usepackage{graphicx, enumerate}
\usepackage{amsmath}
\usepackage{latexsym}
\usepackage{longtable}
\usepackage{tabularx}
\usepackage{enumitem}
\usepackage{amsmath}
\usepackage{amsfonts}
\usepackage{comment}
\usepackage{amsthm}
\usepackage{setspace}
\usepackage{graphicx}
\usepackage{tikz}
\usepackage{hyperref}
\usepackage{url}

\newtheorem{thm}{Theorem}[section]

\newtheorem{prb}[thm]{Problem}
\newtheorem{obs}[thm]{Observation}

\newtheorem{cor}[thm]{Corollary}

\theoremstyle{definition}
\newtheorem{definition}[thm]{Definition}

\newtheorem{exm}[thm]{Example}

 \makeatletter
\def\moverlay{\mathpalette\mov@rlay}
\def\mov@rlay#1#2{\leavevmode\vtop{%
   \baselineskip\z@skip \lineskiplimit-\maxdimen
   \ialign{\hfil$\m@th#1##$\hfil\cr#2\crcr}}}
\newcommand{\charfusion}[3][\mathord]{
    #1{\ifx#1\mathop\vphantom{#2}\fi
        \mathpalette\mov@rlay{#2\cr#3}
      }
    \ifx#1\mathop\expandafter\displaylimits\fi}
\makeatother

\footskip=30pt
\vspace{5cm}

\textwidth4.5true in
\textheight7.2true in


\usepackage{placeins}
\pgfdeclarelayer{nodelayer}
\pgfdeclarelayer{edgelayer}
\pgfsetlayers{edgelayer,nodelayer,main}
\tikzstyle{new style 0}=[fill=blue, draw=black, shape=circle]
\tikzstyle{none}=[fill=none, draw=none, shape=circle]
\tikzstyle{Black Node}=[fill=none, draw=black, shape=circle, text=black, inner sep=2pt]
\tikzstyle{White Node}=[fill=white, draw=none, shape=rectangle, inner sep=0pt]
\tikzstyle{Black Edge}=[-]
\tikzstyle{new edge style 0}=[fill=black, draw={rgb,255: red,191; green,128; blue,64}, -]


\begin{document}

\title
{\Large \sc \bf On some classes of cycles-related $\Gamma$-harmonious graphs}
\date{}
\author{{\sc\bf Gyaneshwar Agrahari$^1$ and Dalibor Froncek$^2$}\\
{\footnotesize $^1$Louisiana State University, $^2$University of Minnesota Duluth}\\
{\footnotesize {\it  gyan.agrahari77@gmail.com,  dalibor@d.umn.edu}}
 }
\maketitle

\begin{abstract}
	A graph $G(V,E)$ is $\Gamma$-harmonious when there is an injection $f$ from $V$ to an Abelian group $\Gamma$ such that the induced edge labels defined as $w(xy)=f(x)+f(y)$ form a bijection from $E$ to $\Gamma$. We study $\Gamma$-harmonious labelings of several cycles-related classes of graphs, including Dutch windmills, generalized prisms, generalized closed and open webs, and superwheels.
\end{abstract}

\noindent
\textbf {Keywords :}
Harmonious labeling, Abelian group.

\noindent
\textbf{2000 Mathematics Subject Classification:} 05C78

\section{Introduction}\label{sec:intro}

Harmonious graphs have been extensively studied since 1980, when Graham and Sloane~\cite{Graham-Sloane} first defined the notion. 

A graph $G=(V,E)$ with $q$ edges is \emph{harmonious} if there exists an injection $f$ from the set of vertices of $G$ to the additive group ${Z}_q$, $f: V\to{Z}_q$  such that the induced \emph{edge labels} $w(e)$ of all edges, defined as $w(xy)=(f(x)+f(y))\mod q$, are all distinct. 
That is, the induced function $w:E\to{Z}_q$ is a bijection. When $G$ is a tree, exactly one vertex label is repeated. Graham and Sloane proved that almost all graphs are not harmonious. They also proved that if a harmonious graph has an even number of edges $q$ and the degree of every vertex is divisible by $2k$ then $q$ is divisible by $2k+1$. Liu and Zhang~\cite{Liu-Zhang}  generalized this condition and also proved that there is no forbidden subgraph condition for harmonious graphs, that is, that every graph is a subgraph of a harmonious graph. 

In this paper, we study the concept of $\Gamma$-harmonious graphs by extending the notion of harmonious graphs to any Abelian group $\Gamma$. We say that a graph $G$ with $q$ edges is \emph{$\Gamma$-harmonious} for a given Abelian group $\Gamma$ of order $q$ if there exists an injection $f: V\to\Gamma$ such that the induced labels $w(e)$ of all edges, defined as $w(xy)=f(x)+f(y)$ (where the addition is performed in $\Gamma$), are again all distinct. In other words, the induced function $w:E\to\Gamma$ is a bijection. This notion was introduced by Montgomery, Pokrovskiy and Sudakov in~\cite{MPS}, but their results were all dealing just with cyclic groups. We are not aware of any results for non-cyclic groups whatsoever.

We will present several classes of cycles-related graphs that are $\Gamma$-harmonious for various Abelian groups $\Gamma$. All groups in this paper are finite.


\section{Definitions and notation}\label{sec:defs}

We will use $A, H$ for subgroups, and the group operation will be the usual component-wise addition. A coset determined by a subgroup $A$ of $\Gamma$ and a coset representative $\alpha$ will then be denoted by $\alpha+A$. A cyclic subgroup of $Z_n$ generated by an element $a$ will be denoted by $\langle a\rangle_n$ or simply  $\langle a\rangle$ if no confusion can arise.

First we repeat the formal definition of $\Gamma$-harmonious graphs.

\begin{definition}\label{def:gamma-harmonious}
	 Let $G=(V,E)$ be a graph with $q$ edges and $\Gamma$ an Abelian group of order $q$. We say that $G$ is \emph{$\Gamma$-harmonious} if there exists an injection $f: V\to\Gamma$ (called \emph{$\Gamma$-harmonious labeling}) with the property that the induced labels $w(e)$ of all edges, defined as $w(xy)=f(x)+f(y)$ (where the addition is performed in $\Gamma$), are all distinct. In other words, the induced function $w:E\to\Gamma$ is a bijection. 
	 
	 When $\Gamma=Z_q$, then a $Z_q$-harmonious graph is simply called \emph{harmonious}.
\end{definition}

\begin{definition}\label{def:strongly-harmonious}
     A graph $G$ with $q$ edges is \emph{strongly $\Gamma$-harmonious} if it is $\Gamma$-harmonious for all Abelian groups of order $q$.
\end{definition}

Now we present definitions of the classes of graphs studied in the following sections.

\begin{definition}\label{def:wheel}
	The \emph{wheel graph} $W_n$ arises from the cycle $C_n$ by adding a single vertex and joining it by an edge to every vertex of the cycle.
\end{definition}

	The wheel graph is often described as $C_n + K_1$.

\begin{definition}\label{def:superwheel}
	The \emph{superwheel} $SW_{k,n}$ is a graph arising from the cycle $C_n$ by adding $k$ vertices not adjacent to each other but adjacent to every vertex in $C_n$. 
\end{definition}

	The number of edges in $SW_{k,n}$ is $(k+1)n$. It can be also described as $C_n + kK_1$ or $C_n + \overline{K_k}$. When $k=1$, $SW_{1,n}$ is just the wheel $W_n$.

\begin{definition}\label{def:windmill}
	The \emph{Dutch windmill graph} $D_n^m$ is a graph consisting of $m$ copies of $C_n$ with a common vertex, called the \emph{central vertex}. The $n$-cycles in $D_n^m$ are also called \emph{blades}. 
\end{definition}
	The number of edges of $ D_n^m$ is $nm$. When $m=1$,  $D_n^1$ is a cycle; thus, in our examination, $m>1$. For $n=3$, the graph $D^m_3$ is also called the \emph{friendship graph}. 

\begin{definition}\label{def:gen_prism}
	The \emph{generalized prism} $Y_{m,n}$ is the Cartesian product of the path $P_m$ and the cycle $C_n$.
\end{definition}
	
	The number of edges in $Y_{m,n}$ is $(2m-1)n$. When $m=2$, we obtain the usual \emph{prism}. The cycle $C_n$ can also be seen as the degenerate generalized prism $Y_{1,n}$.

\begin{definition}\label{def:closed_web}
	The \emph{generalized closed web} $CW_{m,n}$ is a graph arising from  the generalized prism $Y_{m,n}$ by adding a new vertex and joining it by an edge to every vertex of degree three of the top cycle of $Y_{m,n}$.
\end{definition}

	The number of edges in $CW_{m,n}$ is $2mn$. The closed web $CW_{2,n}$ is sometimes also called the \emph{closed helm} (see below). The wheel $W_n$ is $CW_{1,n}$. 

\begin{definition}\label{def:open_web}
	The \emph{generalized open web} $OW_{m,n}$ is a graph arising from the generalized closed web $CW_{m,n}$ by removing the edges of the bottom cycle $C_n$.
\end{definition}

		The number of edges in $OW_{m,n}$ is $(2m-1)n$. The \emph{generalized open web} $OW_{m,n}$ is sometimes called just the \emph{generalized web}. The open web $OW_{2,n}$ is better know as the \emph{helm graph}. The usual \emph{web graph} is then $OW_{3,n}$.

\section{Harmonious groups}\label{sec:related}

For the sake of completeness, we first state some well known group theory results, which we will use later, starting with \emph{The Fundamental Theorem of Finite Abelian Groups.} 

\begin{thm}[The Fundamental Theorem of Finite Abelian Groups]\label{thm:fund-Abel}
	Let $\Gamma$ be an Abelian group of order $n=p^{s_1}_1 p^{s_2}_2\dots p^{s_k}_k$, where
	$k\geq1$,  $p_1,p_2,\dots,p_k$ are primes, not necessarily distinct, and $s_1,s_2,\dots,s_k$ positive integers.
	
	Then $\Gamma$ is isomorphic to
	$Z_{p^{s_1}_1}\oplus Z_{p^{s_2}_2}\oplus\dots\oplus Z_{p^{s_k}_k}$ and if we moreover require that
	 for every $i=1,2,\dots,k-1$ we have $p_i\leq p_{i+1}$ and if $p_i=p_{i+1}$, then $s_i\leq s_{i+1}$, then the expression determined by 
	the $k$-tuple $(p^{s_1}_1,p^{s_2}_2,\dots,p^{s_k}_k)$ is unique and we call it the \emph{canonical form} of the group $\Gamma$. 
\end{thm}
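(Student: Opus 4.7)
The plan is to prove the theorem in three stages: a primary decomposition of $\Gamma$ into its Sylow subgroups, a decomposition of each $p$-primary component into cyclic factors, and finally uniqueness of the resulting factors (which yields the canonical form).

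First I would establish the primary decomposition. Grouping equal primes in the factorization $n = p_1^{s_1}\cdots p_k^{s_k}$, I may assume we have distinct primes $q_1,\dots,q_\ell$ with exponents $t_1,\dots,t_\ell$. Define, for each $i$, the subgroup $\Gamma_{q_i} = \{x \in \Gamma : q_i^r x = 0 \text{ for some } r \geq 0\}$. By Lagrange, $|\Gamma_{q_i}|$ is a power of $q_i$, so the subgroups $\Gamma_{q_i}$ have pairwise coprime orders and intersect trivially. To show $\Gamma = \Gamma_{q_1}+\cdots+\Gamma_{q_\ell}$, I would apply Bezout's identity to $\gcd(q_i^{t_i}, n/q_i^{t_i})=1$ and write each $x \in \Gamma$ as a sum of elements lying in the respective primary components. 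This gives $\Gamma \cong \Gamma_{q_1}\oplus\cdots\oplus\Gamma_{q_\ell}$.

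Next I would decompose each $p$-primary component $\Gamma_p$ as a direct sum of cyclic subgroups of $p$-power order, by induction on $|\Gamma_p|$. The heart of the argument, which I expect to be the main obstacle, is the splitting lemma: if $a \in \Gamma_p$ has maximal order, then $\langle a\rangle$ is a direct summand of $\Gamma_p$. The standard approach is to choose a subgroup $H \leq \Gamma_p$ that is maximal subject to $H\cap\langle a\rangle = \{0\}$, and then argue by contradiction that $\langle a\rangle + H = \Gamma_p$: pick $x \notin \langle a\rangle + H$ with $px \in \langle a\rangle + H$, write $px = ma + h$, and use the fact that the order of $a$ is maximal to force $p \mid m$, thereby producing an element that enlarges $H$ while preserving trivial intersection with $\langle a\rangle$, contradicting maximality of $H$. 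Once the splitting lemma is in place, induction applied to $H$ decomposes $\Gamma_p$ into cyclic factors of $p$-power order.

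Finally, for uniqueness I would exhibit invariants of $\Gamma_p$ that determine the multiset of factor orders independently of the chosen decomposition. The cleanest choice is to note that each quotient $p^k\Gamma_p / p^{k+1}\Gamma_p$ is an $\mathbb{F}_p$-vector space whose dimension equals the number of cyclic factors of order at least $p^{k+1}$ in any decomposition; these dimensions determine the multiset $\{p^{s_i}\}$ uniquely. Combining uniqueness across primary components yields the canonical form once the ordering convention on $(p_i,s_i)$ is imposed. The principal difficulty throughout is the splitting lemma; the primary decomposition and the invariant-based uniqueness are then routine.
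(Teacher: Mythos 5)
Your outline is the standard and correct proof of this classical theorem; note, however, that the paper does not prove Theorem~\ref{thm:fund-Abel} at all---it is stated without proof as background, so there is no authorial argument to compare against. Your three stages (primary decomposition via Bezout, the splitting lemma that a cyclic subgroup generated by an element of maximal order is a direct summand, and uniqueness via the $\mathbb{F}_p$-dimensions of $p^k\Gamma_p/p^{k+1}\Gamma_p$) are exactly the textbook route, and the two small points worth making explicit when you write it up are that in an Abelian $p$-group the maximal element order equals the exponent (which is what forces $p\mid m$ in the splitting lemma) and that the element $x\notin\langle a\rangle+H$ with $px\in\langle a\rangle+H$ exists because the quotient by $\langle a\rangle+H$ is a nontrivial $p$-group.
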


Sometimes it is useful to express an Abelian group in a different way.

\begin{thm}
	\label{thm:Abel-chain}
	Let  $\Gamma=Z_{p^{s_1}_1}\oplus Z_{p^{s_2}_2}\oplus\dots\oplus Z_{p^{s_k}_k}$ of order $n$ be as in Theorem~\ref{thm:fund-Abel}.
	Then it can be written in a unique way in \emph{nested form} as
	$\Gamma=Z_{n_1}\oplus Z_{n_2}\oplus\dots\oplus Z_{n_t}$, where 
	$n_t\geq2, \ n= n_1 n_2\dots n_t,$ and $n_{i+1}\mid n_i$ for $i=1,2,\dots,t-1$.
\end{thm}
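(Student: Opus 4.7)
The plan is to prove existence by regrouping the primary components using the Chinese Remainder Theorem, and then to derive uniqueness from the uniqueness of the canonical form given in Theorem~\ref{thm:fund-Abel}.

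For existence, I would first reorganize the canonical decomposition by prime. For each distinct prime $p$ dividing $n$, collect the exponents of the summands $Z_{p^{s_i}}$ with $p_i=p$, and list them in non-increasing order as $a_{p,1}\geq a_{p,2}\geq\dots\geq a_{p,r_p}\geq 1$. Let $t=\max_p r_p$, and for $i=1,2,\dots,t$ set $a_{p,i}=0$ when $i>r_p$. Now define
\[
n_i=\prod_{p}p^{a_{p,i}}\qquad(i=1,\dots,t).
\]
Since the prime powers $p^{a_{p,i}}$ for distinct primes $p$ are pairwise coprime, the Chinese Remainder Theorem gives $Z_{n_i}\cong\bigoplus_{p}Z_{p^{a_{p,i}}}$. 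Taking the direct sum over $i$ and reordering summands yields exactly the canonical decomposition of $\Gamma$, so $\Gamma\cong Z_{n_1}\oplus\dots\oplus Z_{n_t}$. The condition $n_{i+1}\mid n_i$ is immediate from $a_{p,i+1}\leq a_{p,i}$ for every prime $p$, and $n_t\geq 2$ holds because at least one $a_{p,t}$ is positive (namely for the prime $p$ achieving $r_p=t$).

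For uniqueness, suppose $\Gamma\cong Z_{m_1}\oplus\dots\oplus Z_{m_u}$ is another nested decomposition with $m_u\geq 2$ and $m_{j+1}\mid m_j$. I would apply CRT to each $Z_{m_j}$: write $m_j=\prod_p p^{b_{p,j}}$, so $Z_{m_j}\cong\bigoplus_p Z_{p^{b_{p,j}}}$. The divisibility $m_{j+1}\mid m_j$ forces $b_{p,j+1}\leq b_{p,j}$ for every $p$, so for each $p$ the sequence $b_{p,1}\geq b_{p,2}\geq\dots\geq b_{p,u}$ is non-increasing. Discarding the zero exponents gives, for each prime $p$, the list of exponents appearing in the $p$-primary part of $\Gamma$ in the canonical decomposition. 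By Theorem~\ref{thm:fund-Abel} these multisets of exponents are uniquely determined by $\Gamma$, hence $b_{p,j}=a_{p,j}$ for all $p$ and all $j$, and in particular $u=t$ and $m_j=n_j$.

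The main obstacle is conceptual rather than computational: one must be careful to pad the shorter lists of prime-power exponents with zeros so that the indexing lines up across different primes. Once this padding is in place, existence is a direct application of CRT, and uniqueness reduces cleanly to the already-established uniqueness of the canonical form. No calculation-heavy step is needed; the entire argument is a bookkeeping translation between the elementary-divisor form and the invariant-factor form.
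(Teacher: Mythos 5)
Your proof is correct: the regrouping of the elementary divisors by prime, padding exponent lists with zeros, applying the Chinese Remainder Theorem to assemble the invariant factors $n_i=\prod_p p^{a_{p,i}}$, and reducing uniqueness back to the uniqueness of the canonical form of Theorem~\ref{thm:fund-Abel} is the standard derivation of the nested (invariant-factor) form. The paper itself states this theorem without proof, as a well-known group-theoretic fact, so there is no argument in the paper to compare against; your write-up supplies exactly the expected argument, with the only slightly compressed step being the deduction $u=t$, which indeed follows since $m_u\geq2$ and $n_t\geq2$ force each of $u$ and $t$ to be at most the other via the prime attaining the longest exponent list.
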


Another well known theorem is the following.

\begin{thm}\label{thm:cyclic-group}
	An Abelian group $\Gamma
	=Z_{p^{s_1}_1}\oplus Z_{p^{s_2}_2}\oplus\dots\oplus Z_{p^{s_k}_k}$ is cyclic if and only if all primes $p_i$, $i=1,2,\dots,k$ are distinct.
\end{thm}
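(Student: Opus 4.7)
The plan is to reduce everything to the standard fact that the order of an element $(a_1,\dots,a_k)$ in a direct sum $Z_{m_1}\oplus\cdots\oplus Z_{m_k}$ equals $\mathrm{lcm}(|a_1|,\dots,|a_k|)$, so the maximum order of any element of $\Gamma$ (its exponent) is $\mathrm{lcm}(m_1,\dots,m_k)$. A finite Abelian group is cyclic iff it contains an element whose order equals the group order, i.e.\ iff its exponent equals $|\Gamma|$. With this reformulation the theorem reduces to comparing $\mathrm{lcm}(p_1^{s_1},\dots,p_k^{s_k})$ with the product $n=p_1^{s_1}\cdots p_k^{s_k}$.

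For the forward direction, assume the primes $p_1,\dots,p_k$ are pairwise distinct. Then the prime powers $p_1^{s_1},\dots,p_k^{s_k}$ are pairwise coprime, so $\mathrm{lcm}(p_1^{s_1},\dots,p_k^{s_k})=p_1^{s_1}\cdots p_k^{s_k}=n$. The element $(1,1,\dots,1)$ then has order $n=|\Gamma|$, so $\Gamma=\langle(1,1,\dots,1)\rangle$ is cyclic. (One can equivalently argue by induction on $k$, using the familiar isomorphism $Z_m\oplus Z_n\cong Z_{mn}$ whenever $\gcd(m,n)=1$, which itself comes from the Chinese Remainder Theorem.)

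For the converse, I would argue by contrapositive. Suppose two of the primes coincide, say $p_i=p_j=p$ with $i\ne j$, and without loss of generality $s_i\le s_j$. In the lcm the contribution from these two indices collapses from $p^{s_i}\cdot p^{s_j}=p^{s_i+s_j}$ to $p^{\max(s_i,s_j)}=p^{s_j}$, so $\mathrm{lcm}(p_1^{s_1},\dots,p_k^{s_k})\le n/p^{s_i}<n$. Hence every element of $\Gamma$ has order strictly less than $|\Gamma|$, and $\Gamma$ cannot be cyclic.

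There is no real obstacle here; the statement is essentially a direct consequence of the Chinese Remainder Theorem, and the main care required is just to keep the bookkeeping between prime powers, exponents, and group orders clean. The cleanest exposition is via the exponent, as described above, since it handles both directions uniformly in a single comparison.
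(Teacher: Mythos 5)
Your proof is correct. Note, however, that the paper offers no proof of this statement at all: it is listed in Section~3 among standard group-theoretic background (``Another well known theorem is the following''), so there is no argument of the authors' to compare yours against. Your route via the exponent --- the order of $(a_1,\dots,a_k)$ is the least common multiple of the component orders, so $\Gamma$ is cyclic iff $\mathrm{lcm}(p_1^{s_1},\dots,p_k^{s_k})$ equals $|\Gamma|=p_1^{s_1}\cdots p_k^{s_k}$ --- is the standard textbook argument and handles both directions cleanly. One tiny bookkeeping remark: when a prime repeats more than twice, the phrase ``the contribution from these two indices collapses'' is slightly loose, but your stated bound $\mathrm{lcm}\le n/p^{s_i}<n$ holds in general (the lcm already divides the product of the remaining $k-1$ prime powers), so the argument is complete as written.
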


The following observation will be useful.

\begin{obs}\label{obs:cyclic subgroup}
	Let $\Gamma=Z_{n_1}\oplus Z_{n_2}\oplus\dots\oplus Z_{n_t}$ of order $n$ satisfy conclusion of Theorem~\ref{thm:Abel-chain} and $H$ be its cyclic subgroup of order $m$. Then $m\mid n_1$ and $Z_{n_1}$ has a cyclic subgroup of order $m$.
\end{obs}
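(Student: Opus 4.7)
The plan is to exploit the divisibility chain $n_{i+1} \mid n_i$ from the nested form of Theorem~\ref{thm:Abel-chain}, which cascades to give $n_i \mid n_1$ for every $i=1,2,\ldots,t$. This will force $n_1$ to act as the exponent of $\Gamma$, that is, every element of $\Gamma$ has order dividing $n_1$.

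First I would take an arbitrary element $(a_1,a_2,\ldots,a_t) \in Z_{n_1}\oplus Z_{n_2}\oplus\cdots\oplus Z_{n_t}$ and recall that its order is $\mathrm{lcm}(\mathrm{ord}(a_1),\ldots,\mathrm{ord}(a_t))$, where $\mathrm{ord}(a_i)$ divides $n_i$. Since $n_i \mid n_1$ for all $i$, each component order divides $n_1$, and hence so does their least common multiple. Thus every element of $\Gamma$ has order dividing $n_1$.

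Next, because $H$ is cyclic of order $m$, it has a generator $h$, and $h$ has order exactly $m$ as an element of $\Gamma$. Applying the previous observation to $h$ yields $m \mid n_1$, which is the first half of the statement. For the second half, $Z_{n_1}$ is cyclic of order $n_1$, and a cyclic group contains a (unique) cyclic subgroup of every order dividing its own order; since $m \mid n_1$, the group $Z_{n_1}$ has a cyclic subgroup of order $m$.

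I do not anticipate a substantive obstacle: the entire claim boils down to recognizing that the nested-form divisibility makes $n_1$ the exponent of $\Gamma$, together with the elementary fact about subgroups of cyclic groups. The only point that warrants care is making the cascaded divisibility $n_i \mid n_1$ explicit, since it follows from $n_{i+1}\mid n_i$ only by transitive application along the chain.
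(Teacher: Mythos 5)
Your proof is correct, and it takes a genuinely different route from the paper. You argue via the exponent: the chain $n_{i+1}\mid n_i$ gives $n_i\mid n_1$ for all $i$, so the order of any element $(a_1,\dots,a_t)$, being $\mathrm{lcm}(\mathrm{ord}(a_1),\dots,\mathrm{ord}(a_t))$ with each $\mathrm{ord}(a_i)\mid n_i\mid n_1$, divides $n_1$; applying this to a generator of the cyclic subgroup $H$ gives $m\mid n_1$, and the subgroup of order $m$ in $Z_{n_1}$ comes from the standard fact that a cyclic group has a subgroup of every order dividing its order. The paper instead works with prime decompositions: it writes $Z_{n_1}$ in canonical form $Z_{q_1^{z_1}}\oplus\dots\oplus Z_{q_r^{z_r}}$, uses the chain condition to show every prime occurring in the canonical form of $\Gamma$ already occurs among $q_1,\dots,q_r$, then expresses $H$ as $Z_{q_1^{u_1}}\oplus\dots\oplus Z_{q_r^{u_r}}$ and bounds $u_i\leq z_i$ again via the chain condition, which yields $m\mid n_1$. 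Both proofs hinge on the same structural input (the nested divisibility $n_{i+1}\mid n_i$), but your exponent argument is shorter and avoids canonical forms and case analysis on primes entirely, while the paper's version makes the prime-power structure of $H$ inside $Z_{n_1}$ explicit. The only point you should make sure is spelled out, as you note yourself, is the transitive cascade $n_i\mid n_1$; otherwise there is no gap.
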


\begin{proof}
	Let $Z_{q^{z_1}_1}\oplus Z_{q^{z_2}_2}\oplus\dots\oplus Z_{q^{z_r}_r}$ be the canonical form of $Z_{n_1}$. Then by Theorem~\ref{thm:cyclic-group} $q_1,q_2,\dots,q_r$ are all distinct. Moreover, $q_1,q_2,\dots,q_r$ are all  primes in the multiset $\{p_1,p_2,\dots,p_k\}$ defining the canonical form of $\Gamma$. For if not, then there is $p_i$ which appears in the canonical form of some $Z_{n_j}, j>1$ but not in $\{q_1,q_2,\dots,q_r\}$. But then $n_j$ does not divide $n_1$, which contradicts Theorem~\ref{thm:Abel-chain}. Therefore, we can write $H$ as 
	$Z_{q^{u_1}_1}\oplus Z_{q^{u_2}_2}\oplus\dots\oplus Z_{q^{u_r}_r}$, where we allow each $u_i$ to be a non-negative integer.
	
	For the same reason as above we must have $u_i\leq z_i$ for every $i=1,2,\dots,r$, since otherwise $q^{u_i}_i$ appears in the canonical form of some $Z_{n_j}, j>1$ and $n_j\nmid n_1$, a contradiction.
\end{proof}


Another well-known result will be useful.

\begin{thm}\label{thm:fact-group-isom-to-subgroup}
     Let $\Gamma$ be a finite Abelian group. Then
     
	\begin{itemize}
	\item[(1)]
	if $H$ is a subgroup of $\Gamma$, then the quotient group $\Gamma/H$ is isomorphic to some subgroup $K$ of\/ $\Gamma$, and conversely,
	\item[(2)]
	if $K$ is a subgroup of $\Gamma$, then there exists a subgroup $H$ such that $K\cap H=\{e\}$ and the quotient group $\Gamma/H$ is isomorphic to $K$.     
	\end{itemize}
\end{thm}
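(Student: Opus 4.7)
The plan is to reduce both parts to the case where $\Gamma$ is a finite abelian $p$-group, using the primary decomposition. Writing $\Gamma = \bigoplus_{p} \Gamma_p$ where $\Gamma_p$ is the $p$-primary component, any subgroup decomposes compatibly as $H = \bigoplus_p H_p$ with $H_p = H \cap \Gamma_p$, and quotients split the same way. It therefore suffices to prove both statements separately for each primary component and then reassemble the pieces at the end.

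For part (1), once we are in the $p$-group setting $\Gamma_p \cong Z_{p^{s_1}} \oplus \cdots \oplus Z_{p^{s_k}}$ with $s_1 \geq \cdots \geq s_k$, I would apply the structure theorem to the quotient $\Gamma_p / H_p$. The key fact is that this quotient admits invariant factors $p^{t_1} \geq \cdots \geq p^{t_l}$ with $l \leq k$ and $t_j \leq s_j$ for each $j$, since passing to a quotient cannot enlarge the exponent in any cyclic summand. Because $Z_{p^{t_j}}$ embeds canonically into $Z_{p^{s_j}}$ as its unique subgroup of order $p^{t_j}$, summing these embeddings across $j$ and then across primes yields the required subgroup $K$ of $\Gamma$ isomorphic to $\Gamma/H$.

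For part (2), the idea is to reverse this construction. Given a subgroup $K$ of $\Gamma$, decompose each $K_p$ as a direct sum $\bigoplus_j \langle x_j \rangle$ of cyclic groups with $|x_j| = p^{u_j}$ and $u_j \leq s_j$. The plan is to replace the decomposition of $\Gamma_p$ by one in which each generator $x_j$ lies inside a single cyclic factor $Z_{p^{s_j}}$, and then to take $H_p$ as the direct sum of a chosen complement to $\langle x_j \rangle$ inside $Z_{p^{s_j}}$ together with the cyclic factors not occupied by $K_p$. Assembling the $H_p$ across primes produces the desired $H$, and the conditions $K \cap H = \{e\}$ and $\Gamma/H \cong K$ follow factor-by-factor from the construction.

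The main obstacle is the alignment step in part (2): a subgroup $K_p$ does not in general decompose as a direct sum of subgroups of the cyclic summands with respect to any preset decomposition of $\Gamma_p$, because the generators of $K_p$ can project nontrivially onto several summands at once. Arranging that each generator $x_j$ lies ``purely'' in one cyclic factor requires an inductive change-of-basis argument, essentially a Smith normal form computation adapted to abelian $p$-groups. I expect this alignment, together with the verification that the constructed $H_p$ simultaneously meets both defining properties, to be the most delicate part of the proof; once it is in place, the primary reassembly is immediate and both parts follow together.
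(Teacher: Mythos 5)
The paper states this result without proof (as ``well known''), so there is nothing to compare your argument against; judged on its own merits, your part (1) is essentially correct: primary decomposition is compatible with subgroups and quotients, and the termwise domination $t_j\le s_j$ of the invariant factors of a quotient is true, although your one-line justification (``a quotient cannot enlarge the exponent in any cyclic summand'') needs an actual argument — for instance, compare $\dim_{\mathbb{F}_p}\bigl(p^iA/p^{i+1}A\bigr)$ for $A$ and for a quotient of $A$, or dualize so that quotients become subgroups.

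Part (2), however, cannot be repaired along the lines you propose, and in fact cannot be proved at all, because statement (2) as printed is false. The conditions $K\cap H=\{e\}$ and $\Gamma/H\cong K$ force $|K||H|=|\Gamma|$ and hence $\Gamma=K\oplus H$, i.e.\ every subgroup would be a direct summand; but for $\Gamma=Z_4$ and $K=\langle 2\rangle$ the only subgroup of order $2$ is $K$ itself, so no admissible $H$ exists. Your construction runs into exactly this wall twice. First, ``a chosen complement to $\langle x_j\rangle$ inside $Z_{p^{s_j}}$'' does not exist when $0<u_j<s_j$: any two nontrivial subgroups of a cyclic $p$-group share its unique subgroup of order $p$. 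Second, the alignment step is genuinely impossible in general, not merely delicate: in $\Gamma=Z_{p^3}\oplus Z_p=\langle a\rangle\oplus\langle b\rangle$ take $K=\langle pa+b\rangle\cong Z_{p^2}$; the unique order-$p^2$ subgroup of any cyclic summand of order $p^3$ is $\langle pa\rangle\ne K$, and $K$ is not itself a summand (a complement would have order $p^2$, forcing $\Gamma$ to have exponent at most $p^2$), so no cyclic decomposition of $\Gamma$ puts $pa+b$ inside a single factor — and indeed every subgroup of order $p^2$ here meets $K$ in $\langle p^2a\rangle$. What is true, and is all the paper ever uses (e.g.\ in Corollary~\ref{cor:odd-prisms}), is the weaker assertion that for every subgroup $K$ there is some $H$ with $\Gamma/H\cong K$. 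That follows from your part-(1) machinery with no alignment at all: the invariant factors $p^{u_j}$ of $K_p$ are dominated termwise by those of $\Gamma_p$, so inside the $j$-th cyclic factor $C_j$ one can choose $H_j$ with $C_j/H_j\cong Z_{p^{u_j}}$ and set $H=\bigoplus_p\bigoplus_j H_j$ (alternatively, both parts follow at once by duality, since $\Gamma/H\cong\widehat{\Gamma/H}$ embeds in $\widehat{\Gamma}\cong\Gamma$). So keep part (1), prove only this weaker form of part (2), and note that the hypothesis $K\cap H=\{e\}$ should be deleted from the theorem statement.
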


The main building block we will be using in our constructions is a theorem that follows as a direct corollary form a group theory result proved by Beals, Gallian, Headly and Jungreis~\cite{BGHJ}. They studied {harmonious groups.}

\begin{definition}\label{def:harm-group}
	A finite group $\Gamma$ of order $n$ is \emph{harmonious} if the elements can be ordered  $(g_1,g_2,\dots,g_n)$ so that the set of element products $\{g_1g_2,g_2g_3,\dots,g_{n-1}g_n,g_ng_1\}$ is equal to the set of all elements of $\Gamma$. The ordered list $(g_1,g_2,\dots,g_n)$ is called the \emph{harmonious sequence} of $\Gamma$.

\end{definition}

They proved the following characterization of harmonious groups.

\begin{thm}[Beals et al.~\cite{BGHJ}]
	\label{thm:harm-groups}
	If\/ $\Gamma$ is a finite, non-trivial Abelian group, then\/ $\Gamma$ is har\-mo\-nious if and only $\Gamma$ has a non-cyclic or trivial Sylow $2$-subgroup and $\Gamma$ is not an elementary $2$-group.
\end{thm}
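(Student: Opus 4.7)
The plan is to prove the two directions separately: necessity follows from a clean global sum argument, while sufficiency requires explicit constructions that depend on the structure of the Sylow $2$-subgroup.

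For necessity, my first step is to exploit the identity $\sum_{i=1}^n (g_i + g_{i+1}) = 2 \sum_{i=1}^n g_i$ (indices mod $n$). Since $g_1,\dots,g_n$ is a permutation of $\Gamma$, the right side equals $2\sigma$ where $\sigma := \sum_{g \in \Gamma} g$; and if the sequence is harmonious the left side also equals $\sigma$. Pairing each non-involution with its inverse gives the classical identity $\sigma = \sum_{g : 2g=0} g$, which is $0$ unless $\Gamma$ has a unique involution $t$ — equivalently, unless the Sylow $2$-subgroup is cyclic and non-trivial — in which case $\sigma = t \neq 0$. So harmoniousness forces $\sigma = 2\sigma$; if $\sigma = t$ then $2\sigma = 2t = 0 \neq t = \sigma$, contradiction. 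This rules out cyclic non-trivial Sylow $2$-subgroups. For an elementary $2$-group, every element satisfies $2g = 0$, so $g_i + g_{i+1} = 0$ would force $g_{i+1} = -g_i = g_i$, impossible in a permutation; hence $0 \in \Gamma$ is never realized as an edge label, so no harmonious ordering exists.

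For sufficiency, I would proceed by cases on the canonical decomposition given by Theorem~\ref{thm:fund-Abel}, exhibiting an explicit harmonious ordering in each case. The base case is $\Gamma = Z_n$ with $n$ odd: the natural sequence $(0, 1, \dots, n-1)$ produces sums $\{1, 3, \dots, 2n-3, n-1\}$, which cover $Z_n$ because $2$ is invertible mod $n$. For a general group $\Gamma = O \oplus S$ where $O$ is the odd part and $S$ is a non-cyclic non-elementary Sylow $2$-subgroup, the strategy is inductive: starting from a harmonious sequence on one factor, I would scan its elements in order while cycling through cosets of a carefully chosen cyclic subgroup of the other factor, adjusting by ``shifts'' so the induced edge labels stay distinct. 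Odd-order non-cyclic groups such as $Z_3 \oplus Z_3$ admit the same coset-sweep approach, using that $2$ is a unit. The $2$-Sylow cases — $Z_{2^{a_1}} \oplus \dots \oplus Z_{2^{a_r}}$ with $r \geq 2$ and not all $a_i = 1$ — should first be settled by hand in small cases (e.g.\ $Z_2 \oplus Z_4$) to produce a template, and then extended inductively, arranging the initial and final elements of each block so that concatenation into a larger product is legal.

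The main obstacle, and the place where the argument genuinely needs more than the necessity counting, is the explicit construction on groups with non-cyclic, non-elementary Sylow $2$-subgroup. The doubling map is no longer a bijection there, so the sum identity is automatically satisfied and gives no help in locating a harmonious ordering; one is left to engineer the sequence directly. I expect that most of the proof's effort lies in producing a Gray-code-like traversal of the $2$-part that plays well with a cyclic traversal of the odd part, and in verifying bijectivity of the induced edge-sum map on the concatenated sequence. Once such a modular construction is in place, combining it with the cyclic odd-order base case (and the trivially handled trivial-Sylow-$2$ case) closes out all groups permitted by the two necessary conditions.
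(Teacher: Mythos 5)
The paper does not prove this statement at all: it is quoted verbatim from Beals, Gallian, Headly and Jungreis~\cite{BGHJ}, so there is no internal proof to compare yours against; your proposal has to stand on its own as a proof of the characterization. Its necessity half does stand: the identity $\sum_i\bigl(g_i+g_{i+1}\bigr)=2\sigma$ with $\sigma=\sum_{g\in\Gamma}g$, combined with the classical fact that $\sigma$ equals the unique involution when the Sylow $2$-subgroup is cyclic and non-trivial (and $0$ otherwise), correctly rules out cyclic non-trivial Sylow $2$-subgroups, and the observation that in an elementary $2$-group the label $0$ can never occur on an edge of the cyclic ordering correctly rules out elementary $2$-groups. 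That part is complete and is the standard counting argument.

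The genuine gap is the sufficiency direction, which is the entire substance of the theorem. Beyond the odd cyclic base case $(0,1,\dots,n-1)$ (which you verify correctly), everything else is a plan rather than an argument: ``scan its elements in order while cycling through cosets of a carefully chosen cyclic subgroup \dots adjusting by shifts so the induced edge labels stay distinct'' names no subgroup, no shift rule, and no verification that the induced sums are distinct; the non-cyclic odd groups, the $2$-groups $Z_{2^{a_1}}\oplus\dots\oplus Z_{2^{a_r}}$ with $r\geq 2$ that are not elementary, and the glueing of the odd part with the $2$-part are all deferred to templates ``settled by hand in small cases'' and an unspecified induction on concatenated blocks. You acknowledge this yourself (``the main obstacle \dots the explicit construction''), but acknowledging the obstacle does not discharge it: as written, no reader could reconstruct a harmonious sequence for, say, $Z_2\oplus Z_4\oplus Z_3$ from your text, and the whole difficulty of~\cite{BGHJ} lies exactly in those constructions and in proving that the concatenation preserves bijectivity of the edge-sum map. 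So the proposal proves only the ``only if'' half; the ``if'' half would need the missing constructions (or an explicit appeal to~\cite{BGHJ}, which is what the paper itself does) before it could be accepted.
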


	
	In other words, all non-trivial Abelian groups of odd order are har\-mo\-nious, and a harmonious group of even order {must contain a subgroup isomorphic to $Z_{2}\oplus Z_{2}$} but cannot be equal to $Z_2\oplus Z_2\oplus\dots\oplus Z_2$. That is, it must contain a subgroup isomorphic to $Z_2\oplus Z_{2s}$ where $s\geq2$.

An immediate consequence of Theorem~\ref{thm:harm-groups} is a result on $\Gamma$-harmonious labeling of cycles, which we state in the next section.

\section{Known results}\label{sec:known}

The first result on cycle-related harmonious graphs was proved by Graham and Sloane~\cite{Graham-Sloane}. Note that ``harmonious'' here means ``$Z_n$-harmonious.''

\begin{thm}[Graham and Sloane~\cite{Graham-Sloane}]
	\label{thm:GS-odd-cycles}
	The cycle $C_n$ with $n\geq 3$ is harmonious if and only if $n$ is odd.
\end{thm}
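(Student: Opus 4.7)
The plan is to prove both directions by a counting/parity argument for the necessity and an explicit construction for the sufficiency.

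For the necessity direction, suppose $C_n$ is $Z_n$-harmonious with vertices $v_0,v_1,\dots,v_{n-1}$ traversed in cyclic order and labeling $f: V \to Z_n$. Since $|V|=|Z_n|=n$ and $f$ is an injection, $f$ is in fact a bijection onto $Z_n$, so $\sum_{i=0}^{n-1} f(v_i) = 0+1+\dots+(n-1) = \tfrac{n(n-1)}{2}$. On the other hand, each vertex label is used in exactly two edge sums, so the sum of all edge labels equals $2\sum_{i} f(v_i) = n(n-1)\equiv 0 \pmod n$. But since $w$ is also a bijection onto $Z_n$, the sum of edge labels must also equal $\tfrac{n(n-1)}{2} \pmod n$. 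Thus $\tfrac{n(n-1)}{2}\equiv 0 \pmod n$, which forces $n$ to be odd (if $n$ were even, $\tfrac{n(n-1)}{2} = \tfrac{n}{2}(n-1) \equiv \tfrac{n}{2} \not\equiv 0 \pmod n$ since $n-1$ is odd).

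For the sufficiency direction, assume $n$ is odd and define $f(v_i)=i$ for $i=0,1,\dots,n-1$. Then the $n$ induced edge labels are
\[
1,\ 3,\ 5,\ \dots,\ 2n-3 \pmod n \qquad \text{and} \qquad (n-1)+0 = n-1.
\]
The first $n-1$ values are all distinct modulo $n$, because $2i+1\equiv 2j+1\pmod n$ would force $2(i-j)\equiv 0\pmod n$, and since $\gcd(2,n)=1$ we would get $i=j$. Their sum is $(n-1)^2\equiv 1 \pmod n$, while the sum of all residues of $Z_n$ is $0$ (as $n$ is odd), so the residue missing from $\{1,3,\dots,2n-3\}\pmod n$ is exactly $-1\equiv n-1$. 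This is supplied by the wrap-around edge $v_{n-1}v_0$, so $w$ is a bijection from $E$ to $Z_n$.

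The only mild obstacle is making sure the missing residue in the set of odd sums is identified correctly; this is handled by the brief arithmetic above using $(n-1)^2\equiv 1$ and the fact that the full sum over $Z_n$ is $0$ when $n$ is odd. Both directions together yield the equivalence.
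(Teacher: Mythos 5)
Your proof is correct, and it takes a different route from the paper, which does not prove this statement at all: Theorem~\ref{thm:GS-odd-cycles} is quoted from Graham and Sloane, and the cycle results the paper actually uses are packaged in Theorem~\ref{thm:all-cycle}, whose one-line proof invokes the characterization of harmonious groups by Beals et al.\ (Theorem~\ref{thm:harm-groups}): a harmonious sequence of $\Gamma$ is read off as consecutive vertex labels. Your sufficiency argument is, in effect, the cyclic special case of that: you verify directly that $(0,1,\dots,n-1)$ is a harmonious sequence of $Z_n$ for odd $n$, identifying the one residue missed by the sums $2i+1$ via the total-sum computation $(n-1)^2\equiv 1$ versus $\sum_{a\in Z_n} a\equiv 0$ (one could also note $2i+1\equiv -1$ forces $i\equiv n-1$, which is outside the range, but your accounting is fine). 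Your necessity argument is the classical Graham--Sloane parity count: double-counting vertex labels in the edge sums gives $2\cdot\frac{n(n-1)}{2}\equiv 0$, while bijectivity of $w$ forces the edge-label sum to be $\frac{n(n-1)}{2}\equiv \frac n2\not\equiv 0$ for even $n$. What your approach buys is a short, self-contained, elementary proof for the cyclic group $Z_n$; what the paper's machinery buys is the full statement of Theorem~\ref{thm:all-cycle} for arbitrary Abelian $\Gamma$, including the positive result for even orders when $\Gamma$ contains $Z_2\oplus Z_{2s}$, $s\geq 2$, which a sum-of-elements argument alone cannot deliver (though it does generalize to exclude any $\Gamma$ whose Sylow $2$-subgroup is nontrivial cyclic, since exactly then the sum of all group elements is nonzero).
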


A more general theorem is a direct consequence of Theorem~\ref{thm:harm-groups}. Although it is technically speaking a new result, as it was not formally stated and proved in~\cite{BGHJ}, we state it here because we believe that the authors were aware of it.

\begin{thm}
\label{thm:all-cycle}
  The cycle $C_n$ is $\Gamma$-harmonious if and only if $\Gamma$ is of order $n$,
   and 
  \begin{enumerate}
      \item $n$ is odd, or
      \item $n$ is even and $\Gamma$ has a subgroup isomorphic to $Z_2\oplus Z_{2s}$ where $s\geq2$. 
  \end{enumerate}
\end{thm}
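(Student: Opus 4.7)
The plan is to recognize that a $\Gamma$-harmonious labeling of $C_n$ is essentially the same object as a harmonious sequence of $\Gamma$ in the sense of Definition~\ref{def:harm-group}, and then to translate the characterization in Theorem~\ref{thm:harm-groups} into the subgroup condition stated here.

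First, I would write $C_n = v_1 v_2 \dots v_n v_1$ and note that since $|V(C_n)| = |\Gamma| = n$, any injection $f : V \to \Gamma$ is in fact a bijection. Setting $g_i = f(v_i)$, the edge labels are precisely $g_1 + g_2, g_2 + g_3, \dots, g_{n-1} + g_n, g_n + g_1$. The labeling is $\Gamma$-harmonious if and only if this multiset is exactly $\Gamma$. Comparing with Definition~\ref{def:harm-group}, this is exactly the statement that $(g_1, \dots, g_n)$ is a harmonious sequence of $\Gamma$. Hence $C_n$ is $\Gamma$-harmonious if and only if $\Gamma$ is a harmonious group of order $n$.

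Next, I would apply Theorem~\ref{thm:harm-groups}, which says $\Gamma$ is harmonious iff its Sylow $2$-subgroup is trivial or non-cyclic and $\Gamma$ is not an elementary abelian $2$-group. If $n$ is odd, the Sylow $2$-subgroup is trivial, $\Gamma$ is not a $2$-group, and so $\Gamma$ is harmonious; this is case (1). If $n$ is even, the Sylow $2$-subgroup $S_2$ is non-trivial, so the Beals et al.\ condition becomes: $S_2$ is non-cyclic and $\Gamma \not\cong Z_2^k$. I would then show this is equivalent to the existence of a subgroup isomorphic to $Z_2 \oplus Z_{2s}$ with $s \geq 2$.

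For the forward direction, assume $S_2$ is non-cyclic and $\Gamma \not\cong Z_2^k$. If $\Gamma$ has an element of odd order $m \geq 3$, then combined with a $Z_2 \oplus Z_2$ subgroup of $S_2$, we obtain a subgroup $Z_2 \oplus Z_2 \oplus Z_m$, which contains $Z_2 \oplus Z_{2m}$ (since $\gcd(2,m)=1$), giving $s = m \geq 3$. If $\Gamma$ is a $2$-group, then writing its canonical form we must have at least two $2$-power factors (non-cyclic $S_2$) and at least one of them of order $\geq 4$ (not elementary), yielding a subgroup $Z_2 \oplus Z_{2^a}$ with $a \geq 2$, so $s = 2^{a-1} \geq 2$. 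For the converse, if $\Gamma$ contains $Z_2 \oplus Z_{2s}$ with $s \geq 2$, then taking the $Z_2$ summand together with the order-$2$ subgroup of $Z_{2s}$ shows $S_2$ contains $Z_2 \oplus Z_2$ and is therefore non-cyclic; moreover $Z_{2s}$ has an element of order $2s \geq 4$, so $\Gamma$ is not an elementary $2$-group.

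The main content of the argument is the translation step in the last paragraph — nothing is difficult, but the case split between groups containing odd-order elements and pure $2$-groups has to be handled carefully. The rest is a direct appeal to Theorem~\ref{thm:harm-groups}.
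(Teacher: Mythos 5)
Your proposal is correct and follows essentially the same route as the paper: identify a $\Gamma$-harmonious labeling of $C_n$ (where the injection is forced to be a bijection) with a harmonious sequence of $\Gamma$, and then invoke Theorem~\ref{thm:harm-groups}. The only difference is that you explicitly verify the equivalence between the Sylow-$2$ condition of Beals et al.\ and the existence of a $Z_2\oplus Z_{2s}$ subgroup with $s\geq2$, which the paper merely asserts in the remark following Theorem~\ref{thm:harm-groups}; your case split (odd-order element present versus $2$-group) handles that correctly.
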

\begin{proof}
   This follows directly from Theorem~\ref{thm:harm-groups}. Label the vertices consecutively with the elements of $\Gamma$ in their harmonious sequence order.
\end{proof}

Graham and Sloane~\cite{Graham-Sloane} further proved the following result on wheel graphs.
\begin{thm}[Graham and Sloane~\cite{Graham-Sloane}]
\label{thm:GS-wheels}
    All wheels $W_n$ are harmonious.
\end{thm}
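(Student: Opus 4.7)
The plan is to construct an explicit $Z_{2n}$-harmonious labeling $f$ of $W_n$, splitting the argument by the parity of $n$. Writing the hub as $c$ and the cycle vertices as $v_0, v_1, \ldots, v_{n-1}$ in cyclic order (so the cycle edges are $v_iv_{i+1}$, indices mod $n$), I would set $f(c)=0$; this is no loss of generality, and it reduces the problem to choosing $n$ distinct nonzero elements of $Z_{2n}$ and arranging them cyclically so that the $n$ consecutive sums modulo $2n$ form exactly the complement of the chosen set (the chosen set itself is the set of spoke labels).

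For $n$ odd, I would use the assignment $f(v_i) = 2i+1$. The vertex labels $\{0, 1, 3, 5, \ldots, 2n-1\}$ are then pairwise distinct, the spoke labels are precisely the $n$ odd residues of $Z_{2n}$, and the cycle-edge sums are $4i+4$ for $0 \le i \le n-2$ together with $(2n-1)+1 \equiv 0 \pmod{2n}$. Since $n$ is odd, $\gcd(4,2n)=2$, and so the multiples of $4$ modulo $2n$ form exactly the $n$ even residues of $Z_{2n}$; hence the $2n$ edge labels are pairwise distinct and fill $Z_{2n}$, establishing harmony for odd $n$.

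The main obstacle is $n$ even, where the labeling $f(v_i)=2i+1$ fails because $4i+4$ and $4(i+n/2)+4$ coincide modulo $2n$. Here I would construct a separate labeling in which the cycle vertices are partitioned into two arcs, one receiving ``small'' odd labels in arithmetic progression and the other receiving ``larger'' even labels in arithmetic progression, with the transition between arcs tuned so that the consecutive cycle sums cover exactly the residues missing from the spoke set. Small cases suggest such a two-block form works: for $n=4$ the cycle labels $1,3,2,6$ give spoke set $\{1,2,3,6\}$ and cycle sums $\{4,5,0,7\}$, and for $n=6$ the cycle labels $1,2,5,4,8,10$ give spoke set $\{1,2,4,5,8,10\}$ and cycle sums $\{3,7,9,0,6,11\}$; in each case the two sets partition $Z_{2n}$. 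I anticipate the even case will split further into subcases according to $n \bmod 4$, each reducing to a direct verification that the proposed spoke set and the induced cycle sums are disjoint and together exhaust $Z_{2n}$.
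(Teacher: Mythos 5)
The paper does not reprove this statement; it is quoted from Graham and Sloane, and the paper's own machinery (Theorem~\ref{thm:superwheel} with $k=1$, taking $H=\langle 2\rangle\cong Z_n$ inside $Z_{2n}$) recovers only the odd case, precisely because for $n$ even the cyclic group $Z_n$ is not harmonious, so an $H$-harmonious labeling of $C_n$ does not exist. Your odd case is correct and is essentially this coset argument in disguise: with the hub at $0$, your cycle labels $2i+1$ put the rim in the nontrivial coset of the even subgroup, the spokes sweep out the odd coset, and the consecutive sums $4i+4$ (together with $0$ from the wrap edge) sweep out the even subgroup since $4$ has additive order $n$ in $Z_{2n}$ when $n$ is odd. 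That part is complete.

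The even case, however, is a genuine gap, and it is the substantive half of the theorem. You exhibit valid labelings for $n=4$ and $n=6$ and then assert that a ``two-block'' pattern (one arc of odd labels in arithmetic progression, one arc of even labels) will generalize, with subcases by $n\bmod 4$ ``each reducing to a direct verification.'' No general formula for the labels is given, no rule for where the two arcs meet or how long each is, and no argument that the induced cycle sums avoid the spoke set and each other for arbitrary even $n$. Since the obstruction you identified for the naive labeling ($4i+4$ repeating with period $n/2$ when $n$ is even) is exactly the kind of collision the two-block construction must be engineered around, the claim that it works cannot be taken on faith from two data points; this is where Graham and Sloane's original proof does real case-dependent work. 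As written, your argument proves only that odd wheels are harmonious, plus the (already known, cf.\ Theorem~\ref{thm:BGHJ-prisms}-style small checks) instances $n=4,6$. To close the gap you would need to write down the even-$n$ labeling explicitly, verify injectivity of the vertex labels, and prove that the spoke labels and consecutive rim sums partition $Z_{2n}$, in each residue class of $n$ modulo $4$ that your construction distinguishes.
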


We generalize their result in the  Section~\ref{sec:prisms}. We prove that odd wheels $W_{2k+1}$ are strongly $\Gamma$-harmonious, and even wheels are $\Gamma$-harmonious for certain groups $\Gamma$.

Gallian~\cite{Gallian} cites~\cite{Gnanajothi} which has the following two results on superwheel graphs.
\begin{thm}[{Gnanajothi}~\cite{Gnanajothi}]
    The superwheel graph $S_{k,n}$ is harmonious if $n$ is odd and $k=2$. 
\end{thm}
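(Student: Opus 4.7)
My plan is to construct an explicit $Z_{3n}$-harmonious labeling of $SW_{2,n}$ by exploiting the subgroup $A=\langle n\rangle=\{0,n,2n\}$ of $Z_{3n}$, whose cosets are naturally indexed by $Z_n$. I would place the two hubs at $c_1=0$ and $c_2=n$, so that the two spokes incident to any cycle vertex $v_i$ carry labels $f(v_i)$ and $f(v_i)+n$ lying in the same coset of $A$. Writing $f(v_i)=i+a(i)\,n$ with $a(i)\in\{0,1,2\}$ then guarantees that the $2n$ spoke labels hit every coset of $A$ exactly twice; the unique ``level'' left empty in coset $i$ is $(a(i)+2)\bmod 3$, and the goal is to choose the $a(i)$ so that the cycle edge landing in coset $i$ fills precisely that level.

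Because $n$ is odd, the map sending a cycle edge $v_jv_{j+1}$ to the coset $(g(j)+g(j+1))\bmod n$ (with $g$ the identity) is a bijection, so exactly one cycle edge falls in each coset. I would try the alternating ansatz $a(i)=2$ if $i$ is even and $a(i)=1$ if $i$ is odd. Consecutive cycle vertices $v_i,v_{i+1}$ (for $0\leq i\leq n-2$) have opposite parities, so $a(i)+a(i+1)\equiv 0\pmod 3$ and the edge $v_iv_{i+1}$ picks up the label $2i+1\pmod{3n}$; for the wrap edge $v_{n-1}v_0$ both endpoints are even (since $n-1$ is even), producing $(n-1+2n)+(0+2n)\equiv 2n-1\pmod{3n}$. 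Thus the set of cycle edge labels is exactly $\{1,3,5,\ldots,2n-1\}$, the odd elements of $Z_{3n}$ not exceeding $2n-1$.

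It then remains to check that the $2n$ spoke labels cover the complementary set. A short case split on the parity of $i$ shows that the $c_1$-spokes contribute the evens in $[n+1,2n-2]\cup[2n,3n-1]$, while the $c_2$-spokes contribute the evens in $[0,n-1]$ together with the odds in $[2n+1,3n-2]$; combined with the cycle labels above, every element of $Z_{3n}$ is accounted for exactly once. Vertex-label distinctness is immediate, since $c_1=0$, $c_2=n$, and the $f(v_i)$ lie in $[n+1,2n-2]\cup[2n,3n-1]$ with pairwise distinct residues modulo $n$. I expect the main obstacle to be identifying the right ansatz for $a$: the level constraints form a small linear system over $Z_3$ with many solutions, but the alternating pattern $a(i)=2-(i\bmod 2)$ is the one that meshes cleanly with the fact that, when $n$ is odd, exactly one cycle edge (the wrap edge) joins two vertices of the same parity while all other cycle edges join vertices of opposite parity.
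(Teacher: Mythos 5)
Your construction is correct: I checked the coset/level bookkeeping and the explicit ranges (including the wrap edge $v_{n-1}v_0$, which is the only cycle edge joining two even-indexed vertices), and the $3n$ edge labels do partition $Z_{3n}$ while the vertex labels $0$, $n$, $i+a(i)n$ remain distinct; small cases ($n=3,5$) confirm the pattern. However, your route is genuinely different from the paper's. The paper does not prove this statement directly (it is cited from Gnanajothi); instead it subsumes it in Theorem~\ref{thm:superwheel}, whose proof takes a subgroup $H$ of order $n$ (here $H=\langle 3\rangle\cong Z_n$, harmonious because $n$ is odd), labels the rim with an $H$-harmonious sequence so that the cycle edges exhaust $H$, and labels the hubs with representatives of the $k$ nontrivial cosets of $H$ so that each hub's spokes exhaust one coset. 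You instead work with the small subgroup $A=\{0,n,2n\}$ of order $k+1=3$, place both hubs inside $A$ so the two spokes at each rim vertex fall in the same coset of $A$, and choose the levels $a(i)$ so that the unique cycle edge in each coset of $A$ fills the remaining slot; note that in your labeling the cycle edge labels $\{1,3,\dots,2n-1\}$ do not form a coset of any order-$n$ subgroup, so this is not the paper's decomposition in disguise. The trade-off: the paper's argument is modular and immediately general (any $k$, and any Abelian $\Gamma$ of order $(k+1)n$ possessing a suitable order-$n$ subgroup, via Theorem~\ref{thm:all-cycle}), whereas yours is elementary and fully explicit, avoiding the harmonious-groups machinery entirely, but it is tailored to $k=2$, $\Gamma=Z_{3n}$ and to the parity structure of odd $n$, and extending it would require redoing the level-assignment analysis.
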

\begin{thm}[{Gnanajothi}~\cite{Gnanajothi}]
The superwheel graph $S_{k,n}$ is not harmonious if $n \equiv 2,4,6 \mod 8$ and $k=2$.  
\end{thm}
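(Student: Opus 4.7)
The plan is to apply the classical sum-of-edge-labels identity and then reduce modulo $2$ or modulo $4$, depending on the residue of $n$ modulo $8$. Suppose for contradiction that $f\colon V(SW_{2,n})\to Z_{3n}$ is a harmonious labeling, let $v_1,\dots,v_n$ be the cycle vertices (each of degree $4$), and let $h_1,h_2$ be the two hub vertices (each of degree $n$). Since the induced edge labels form a bijection onto $Z_{3n}$ and every vertex $v$ contributes $\deg(v)f(v)$ to $\sum_{e}w(e)$, double counting yields
\[
4\sum_{i=1}^{n}f(v_i)\;+\;n\bigl(f(h_1)+f(h_2)\bigr)\;\equiv\;\sum_{i=0}^{3n-1}i\;\equiv\;\frac{3n}{2}\pmod{3n},
\]
where the last congruence uses that $(3n-1)\equiv -1\pmod{3n}$.

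Since $n$ is even in every case of the hypothesis, the reduction $\pi_2\colon Z_{3n}\to Z_2$ given by $x\mapsto x\bmod 2$ is well defined, and when $4\mid n$ the analogous reduction $\pi_4\colon Z_{3n}\to Z_4$ is well defined as well. First I would handle the cases $n\equiv 2\pmod 8$ and $n\equiv 6\pmod 8$ by applying $\pi_2$ to the identity above: the left-hand side vanishes because both $4$ and $n$ are even, while $\pi_2(3n/2)=1$, since $n/2$ is odd in both residue classes. This yields $0\equiv 1\pmod 2$, a contradiction.

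For $n\equiv 4\pmod 8$ I would instead apply $\pi_4$. Now $4\sum f(v_i)$ is divisible by $4$, and $n(f(h_1)+f(h_2))$ is divisible by $4$ because $4\mid n$, so the left-hand side vanishes in $Z_4$. On the other hand $n/2\equiv 2\pmod 4$, hence $\pi_4(3n/2)=2$, again a contradiction. The only subtle point is choosing which quotient to project onto: reducing modulo $2$ cannot distinguish $n\equiv 4\pmod 8$ from $n\equiv 0\pmod 8$, so one must escalate to $Z_4$ in that case, and the method breaks down precisely when $8\mid n$, which is exactly why the statement excludes that residue class.
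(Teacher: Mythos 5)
Your argument is correct. The double-counting identity $\sum_{e}w(e)=\sum_{v}\deg(v)f(v)$ holds in $Z_{3n}$, the total $\sum_{i=0}^{3n-1}i\equiv 3n/2\pmod{3n}$ is computed correctly for even $n$, and the reductions you use are legitimate quotient homomorphisms of $Z_{3n}$, since $2\mid 3n$ always and $4\mid 3n$ when $n\equiv 4\pmod 8$; the contradictions $0\equiv1\pmod 2$ and $0\equiv2\pmod 4$ then follow exactly as you state. One point of comparison: the paper does not prove this statement at all — it is quoted from Gnanajothi's thesis — so there is no in-paper proof to measure against. What you have written is precisely the classical Graham--Sloane parity obstruction mentioned (with a typographical slip) in the paper's introduction: if a harmonious graph has an even number of edges $q$ and every vertex degree is divisible by $2^{k}$, then $2^{k+1}\mid q$. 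In $SW_{2,n}$ the cycle vertices have degree $4$ and the two hubs have degree $n$, so for $n\equiv 2,6\pmod 8$ all degrees are even and the criterion would force $4\mid 3n$, which fails, while for $n\equiv 4\pmod 8$ all degrees are divisible by $4$ and the criterion would force $8\mid 3n$, which also fails; your passage from $Z_2$ to $Z_4$ is exactly the specialization $k=1$ versus $k=2$, and your closing observation that the method dies when $8\mid n$ correctly locates where the obstruction vanishes.
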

We will show that all odd superwheel graphs are strongly $\Gamma$-harmonious and even superwheel graphs are $\Gamma$-harmonious for some groups.

Graham and Sloane~\cite{Graham-Sloane} also established the following result on Dutch windmill graphs.

\begin{thm}[Graham and Sloane~\cite{Graham-Sloane}]
\label{thm:GS-windmill}
 The Dutch windmill graph $D_3^m$  is harmonious if and only if $ m\not \equiv 2 \mod 4$.
\end{thm}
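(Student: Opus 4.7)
The proof splits naturally into necessity and sufficiency. For necessity I would use the standard edge-sum parity obstruction. If $f:V\to Z_{3m}$ is harmonious with central vertex $c$, summing $w(xy)=f(x)+f(y)$ over all edges in two ways gives
$$\sum_{e\in E}w(e)\;=\;\sum_{v\in V}\deg(v)f(v)\;=\;2m\,f(c)+2\sum_{v\ne c}f(v)\pmod{3m},$$
which is the reduction of an even integer. On the other hand, since $w:E\to Z_{3m}$ is a bijection, the same sum equals $3m(3m-1)/2\pmod{3m}$. When $m$ is even this reduces to $3m/2$, and writing $m=2\ell$ with $\ell$ odd (the case $m\equiv 2\pmod 4$) gives $3m/2=3\ell$, which is odd. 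Since $3m$ is even, parity is well defined modulo $3m$, so an even integer cannot be congruent to an odd one, contradicting the assumed labeling.

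For sufficiency I set $f(c)=0$, which reduces the problem to partitioning $Z_{3m}$ into $m$ ``harmonious triples'' $\{a_i,b_i,a_i+b_i\}$ with $a_i,b_i\ne 0$: the blade labeled $(a_i,b_i)$ then contributes the three edge labels $a_i$, $b_i$, and $a_i+b_i$. In the case $m$ odd I would exploit the index-$3$ subgroup $3Z_{3m}$ and require each triple to contain exactly one element from each of the three cosets modulo $3$. The ansatz $a_i=1+3i$, $b_i=2+3i$ for $i=0,1,\dots,m-1$ gives $a_i+b_i=3(1+2i)$, and because $m$ is odd the map $i\mapsto 1+2i$ is a permutation of $Z_m$, so these sums exhaust $3Z_{3m}$ and the construction yields a valid partition of $Z_{3m}$.

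The case $m\equiv 0\pmod 4$ is the main obstacle and requires a separate construction. A short counting shows that the clean ``one element per coset mod $3$'' ansatz fails for every even $m$: writing $a_i=1+3\alpha_i$, $b_i=2+3\beta_i$, $s_i=3\gamma_i$, the relation $\gamma_i\equiv 1+\alpha_i+\beta_i\pmod m$ summed over $i$ forces $m(m-1)/2\equiv 0\pmod m$, which reduces to $m/2\equiv 0\pmod m$ when $m$ is even. I would therefore allow some triples to violate the one-per-coset pattern, balancing pairs of triples of types $(1,1,2)$ and $(2,2,1)$ modulo $3$, together with one or two special triples absorbing the elements $0$ and $3m/2$ that carry the parity. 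My plan is to first work out the base case $m=4$ by hand to isolate a constant-size block of ``irregular'' triples, then extend to general $m=4k$ either by tiling with blocks or by an inductive step that reduces $m$ to $m-4$ by carving off twelve elements forming four triples of the required form, while handling the remainder with a shifted copy of the odd-$m$ construction. Verifying simultaneously that the irregular triples use disjoint sets of elements, that the remaining elements admit a uniform sub-construction, and that no element repeats across triples is the delicate point of the argument.
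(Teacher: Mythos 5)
Your necessity argument is correct: summing the induced edge labels in two ways and using that every vertex degree of $D_3^m$ is even, you force an even integer to be congruent to $3m/2$ modulo the even number $3m$, which is impossible exactly when $m\equiv 2\pmod 4$; this is the standard parity obstruction. Your reduction of sufficiency (with $f(c)=0$) to partitioning $Z_{3m}$ into triples $\{a_i,b_i,a_i+b_i\}$ with all $a_i,b_i\neq 0$ is also sound, and the ansatz $a_i=1+3i$, $b_i=2+3i$ genuinely settles every odd $m$, since $i\mapsto 1+2i$ permutes $Z_m$ and the three residue classes mod $3$ are exhausted.

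The gap is the case $m\equiv 0\pmod 4$, which is half of the sufficiency direction and the genuinely hard part of the theorem. For it you offer only a plan: do $m=4$ by hand, then ``tile'' or induct by carving off a block of irregular triples and finishing with a shifted copy of the odd-$m$ construction. No construction is actually exhibited, and the plan as stated is not obviously workable: after removing twelve elements the leftover set is not a group, so the coset-based odd-$m$ construction cannot simply be ``shifted'' onto it; moreover $m-4$ is again divisible by $4$, so the proposed recursion never reaches a case you have solved, and your mod-$3$ balancing of $(1,1,2)$ and $(2,2,1)$ triples together with the special triples through $0$ and $3m/2$ is exactly the disjointness-and-coverage verification you yourself flag as the delicate point --- that verification \emph{is} the content of the theorem for $m\equiv 0\pmod 4$, and it is missing. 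Note also that the paper does not prove this statement; it is quoted from Graham and Sloane, so a self-contained argument would require an explicit family of triple systems for $m\equiv 0\pmod 4$ (or a citation of their construction) rather than a sketch.
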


We will prove that when both $m$ and $n $ are odd, $D_n^m$ is strongly $\Gamma$-harmonious. Moreover, we will show for $m$ odd and $n$ even, $D_n^m$ is $\Gamma$-harmonious for certain groups. 
 
Also, Graham and Sloane~\cite{Graham-Sloane} proved the following result on generalized prisms.
 
\begin{thm}[Graham and Sloane~\cite{Graham-Sloane}]
\label{thm:GS-gen-prisms}
    The generalized prism $Y_{m,n}$ is harmonious whenever $n$ is odd and $m\geq 2$.\\
\end{thm}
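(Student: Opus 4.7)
\medskip
\noindent\textbf{Proof proposal.} The plan is to write down an explicit labeling of $Y_{m,n}$ and argue that the induced edge labels tile $Z_q$, where $q=(2m-1)n$, by cosets of the cyclic subgroup $\langle 2m-1\rangle$ of order $n$. Index the vertices as $(i,j)$ with $i\in\{0,\dots,m-1\}$ (the row, i.e.\ which copy of $C_n$) and $j\in\{0,\dots,n-1\}$ (the position around that cycle), and try
\[
	f(i,j)=(2m-1)j+i\pmod{q}.
\]
Injectivity is immediate since $0\le i<2m-1$ and $0\le j<n$ force the $mn$ values $(2m-1)j+i$ to be distinct in $\{0,\dots,q-1\}$.

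The second step is to organise the $q$ induced edge labels by coset. A non-wrap cycle edge $(i,j)(i,j+1)$ carries label $(2m-1)(2j+1)+2i$, and the wrap edge $(i,n-1)(i,0)$ carries $(2m-1)(n-1)+2i$; both lie in the coset $2i+\langle 2m-1\rangle$. For these $n$ labels to fill the whole coset bijectively I need the multiset of offsets
\[
	\{(2j+1)\bmod n : 0\le j\le n-2\}\cup\{(n-1)\bmod n\}
\]
to equal $Z_n$. This is precisely where $n$ odd enters, since doubling is then a bijection on $Z_n$. A path edge $(i,j)(i+1,j)$ carries $2(2m-1)j+(2i+1)$, which lies in $(2i+1)+\langle 2m-1\rangle$, and $\{2j\bmod n:0\le j\le n-1\}=Z_n$ by the same parity argument.

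Finally, the $m$ cycle layers contribute the cosets with representatives $0,2,4,\dots,2m-2$, while the $m-1$ path layers contribute those with representatives $1,3,\dots,2m-3$; together these are all $2m-1$ cosets of $\langle 2m-1\rangle$, whose disjoint union is $Z_q$, so $f$ is harmonious. The one bookkeeping point to watch is the cycle case: the $n-1$ odd offsets $1,3,\dots,2n-3$ reduced modulo $n$ miss exactly the residue $n-1$ when $n$ is odd, and the wrap offset $n-1$ supplies precisely that residue. The whole scheme collapses for even $n$, consistent with $C_n$ itself failing to be harmonious in that case.
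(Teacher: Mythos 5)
Your construction is correct and is essentially the specialization of the paper's general coset argument (Theorem~\ref{thm:all_prisms} with $\Gamma=Z_{(2m-1)n}$, $H=\langle 2m-1\rangle$ of order $n$, $\beta=1$, and the standard harmonious labeling of the odd cycle): each cycle layer fills an ``even'' coset of $H$ and each rung layer an ``odd'' coset, and since there are $2m-1$ of them they exhaust $Z_{(2m-1)n}$. The only cosmetic differences are that the paper takes the rung edges ``diagonally,'' so rung labels are shifted edge-weights of the base cycle, whereas you take them vertically and instead use that doubling is a bijection on $Z_n$ for odd $n$; both verifications are sound.
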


Later,  Gallian, Prout and Winters~\cite{GPW} showed the following result on even prisms.

\begin{thm}[Gallian et al.~\cite{GPW}]
\label{thm:BGHJ-prisms}
    The prism $Y_{2,n}$ is harmonious except when $n=4$.
\end{thm}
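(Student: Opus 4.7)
The argument splits naturally into three cases: $n$ odd, $n$ even with $n\geq 6$, and $n=4$. For $n$ odd the statement is immediate from Theorem~\ref{thm:GS-gen-prisms}, which already covers $Y_{m,n}$ for all $n$ odd and $m\geq 2$. The new content is therefore the even case and the non-harmoniousness of $Y_{2,4}$.

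For even $n\geq 6$, I would construct an explicit injection $f:V(Y_{2,n})\to Z_{3n}$. Writing the outer cycle as $u_0,\dots,u_{n-1}$, the inner cycle as $v_0,\dots,v_{n-1}$, and rungs $u_iv_i$, the $3n$ edges fall into three families of size $n$. A first attempt would be to place $f(u_i)$ in one coset and $f(v_i)$ in another coset of the order-$n$ subgroup $\langle 3\rangle_{3n}$, so that each of the three edge families lies in its own residue class modulo $3$. But this forces each cycle to carry a $Z_n$-harmonious labeling, which by Theorem~\ref{thm:all-cycle} is impossible for $n$ even. So the construction has to let the rung labels and cycle-edge labels interleave across cosets. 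I would split on $n\bmod 4$ and write piecewise formulas for $f(u_i)$ and $f(v_i)$ — essentially taking a ``harmonious-like'' sequence on one cycle, shifting it to the other, and choosing the shift so that the rungs absorb exactly those values that the two cycles alone cannot cover — and then verify by direct arithmetic that each residue class modulo $3$ in $Z_{3n}$ is hit exactly $n$ times.

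For $n=4$ I would show $Q_3=Y_{2,4}$ admits no harmonious labeling. Suppose $f:V(Q_3)\to Z_{12}$ were one. Since $Q_3$ is $3$-regular,
\[
\sum_{e\in E}w(e)\;=\;3\sum_{v\in V}f(v)\pmod{12},
\]
and the left side equals $0+1+\cdots+11\equiv 6\pmod{12}$, so the four missing vertex labels sum to $0\pmod 4$. Parity counting gives $6$ edges with odd label, all between odd- and even-labeled vertices; combined with $3$-regularity this forces the number $p$ of odd-labeled vertices into $\{2,4,6\}$. For each of these values the bipartite structure of $Q_3$ severely restricts how those vertices can sit in the two sides of the bipartition, and a refinement modulo $3$ or modulo $4$ is then intended to finish the contradiction.

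The hard part is the construction for even $n\geq 6$: the clean three-coset partition that works for odd $n$ breaks down, so one is forced into a hand-tailored case split on $n\bmod 4$ with formulas whose correctness must be verified class by class in $Z_{3n}$. By comparison, the $n=4$ non-existence is a routine, if slightly technical, finite case analysis driven by the arithmetic and regularity constraints above.
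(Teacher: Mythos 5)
The paper offers no proof of this statement at all: it is quoted verbatim from Gallian, Prout and Winters~\cite{GPW} as a known result, so your argument has to stand on its own, and as written it does not. The odd case is indeed immediate from Theorem~\ref{thm:GS-gen-prisms}, and your observation that the clean coset method of Section~\ref{sec:prisms} cannot handle even $n$ (it would force a $Z_n$-harmonious labeling of $C_n$, which Theorem~\ref{thm:all-cycle} rules out because $Z_n$ is cyclic, hence has cyclic Sylow $2$-subgroup) is correct and is precisely why the even case needs a hand-made construction. But that construction is never given: ``I would split on $n\bmod 4$ and write piecewise formulas \dots and then verify'' is a plan, not a proof. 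Since the structured approach provably fails, the existence of such formulas is exactly the content of the theorem for even $n\geq 6$, and nothing in your text certifies it; until explicit labelings are written down and checked in $Z_{3n}$ (or at least a verifiable general formula is stated), the main case is missing.

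The $n=4$ case is likewise unfinished. The trace identity $\sum_{e}w(e)=3\sum_{v}f(v)\equiv 6\pmod{12}$ and the parity count forcing the number $p$ of odd-labelled vertices into $\{2,4,6\}$ are fine as far as they go, but they contradict nothing by themselves -- plenty of injections $f:V(Q_3)\to Z_{12}$ satisfy both constraints -- and you close with ``a refinement modulo $3$ or modulo $4$ is then intended to finish the contradiction,'' i.e.\ the contradiction is never actually derived. The finishing step must exploit the specific structure of $Q_3$ (its bipartition into two $4$-sets, which $4$-element subsets of $Z_{12}$ can be the omitted labels, and how the three edge classes interact), and that finite but genuine case analysis is absent. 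So both substantive parts of the theorem -- the even construction and the $n=4$ impossibility -- remain gaps.
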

 Jungreis and Reid ~\cite{Jungreis-Reid} showed the following results on type harmonious prisms. $Y_{2m,4n}$ and $Y_{2m+1,4m}$
\begin{thm}[Jungreis and Reid~\cite{Jungreis-Reid}]
    The generalized prism $Y_{m,n}$ is harmonious if:
     \begin{enumerate}
         \item $m=2k$ and $n=4l $ except when $ (k,l)=(1,1)$
         \item $m=2k+1$ and $n=4l$
         \item $m=2k$ and $n=4l+2$
         \item $ n=2l+1$
     \end{enumerate}
\end{thm}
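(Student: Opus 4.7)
The plan is to construct explicit $Z_{(2m-1)n}$-harmonious labelings for each of cases (1)--(4). Case (4) ($n$ odd) is already Theorem~\ref{thm:GS-gen-prisms}, so I would focus on the three even-cycle cases. Index the vertices of $Y_{m,n}$ as $v_{i,j}$ with $1\le i\le m$ and $0\le j\le n-1$, and try affine-in-$j$ labels
\begin{equation*}
f(v_{i,j})=a_i+b_i\, j \pmod{(2m-1)n}.
\end{equation*}
Under such an ansatz, the cycle edges in row $i$ produce the arithmetic progression $S^H_i=\{2a_i+(2j+1)b_i:0\le j<n\}$, and the path edges between rows $i$ and $i+1$ produce $S^V_i=\{(a_i+a_{i+1})+(b_i+b_{i+1})j:0\le j<n\}$. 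There are $2m-1$ such length-$n$ progressions, matching $|Z_{(2m-1)n}|$; the problem reduces to choosing $(a_i)$ and $(b_i)$ so that these progressions partition $Z_{(2m-1)n}$ and so that $f$ is injective on the vertex set.

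The first technical step is to pin down the slopes. Since $2m-1$ is odd, the $2$-part of $(2m-1)n$ coincides with the $2$-part of $n$, and the cyclic group $Z_{(2m-1)n}$ has a unique subgroup $H$ of order $n$, with quotient $Z_{(2m-1)n}/H\cong Z_{2m-1}$. I would choose all slopes so that each $S^H_i$ and each $S^V_i$ occupies exactly one coset of $H$; the partition requirement then projects to a combinatorial condition modulo $2m-1$, namely that the list of $2m-1$ coset representatives $\{2a_i+b_i+H\}_i\cup\{a_i+a_{i+1}+H\}_i$ forms a complete residue system in $Z_{2m-1}$. Because $n$ is even, $S^H_i$ consists only of odd multiples of $b_i$ inside the ambient $\langle b_i\rangle$-coset, which forces using mixed parities of slope across the rows and exploiting the $S^V_i$ to fill in the missing even residues within each coset.

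The main obstacle is case (1), $m=2k$, $n=4l$ with $(k,l)\ne(1,1)$, where the mixed-parity bookkeeping is tightest. The excluded prism $Y_{2,4}$ is already known to be non-harmonious by Theorem~\ref{thm:BGHJ-prisms}, so my task is only to construct labelings for the remaining instances; having either $k\ge 2$ or $l\ge 2$ provides enough slack to place the two slope parities without collision, and I would give explicit templates in the two sub-cases $k\ge2$ and $l\ge2$ separately. Cases (2) ($m$ odd, $4\mid n$) and (3) ($m$ even, $n\equiv 2\pmod 4$) should admit a cleaner unified construction using two alternating slopes with heights $a_i$ in arithmetic progression modulo $2m-1$. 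Once slopes and heights are fixed, vertex-injectivity reduces to checking that no two affine maps $a_i+b_i j$ and $a_{i'}+b_{i'}j$ coincide on $\{0,1,\dots,n-1\}$, a finite linear verification to be discharged uniformly at the end.
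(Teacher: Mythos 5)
First, a point of orientation: the paper does not prove this statement at all; it is quoted from Jungreis and Reid~\cite{Jungreis-Reid}, and the paper's own prism machinery (Theorem~\ref{thm:all_prisms}) deliberately cannot reach the even-$n$ cases here, because for $\Gamma=Z_{(2m-1)n}$ with $n$ even the unique subgroup of order $n$ is cyclic and fails the hypotheses of Theorem~\ref{thm:all-cycle}. So your proposal has to stand on its own, and as written it has a structural gap exactly in the cases (1)--(3) that carry the content (case (4) is indeed just Theorem~\ref{thm:GS-gen-prisms}, and excluding $Y_{2,4}$ via Theorem~\ref{thm:BGHJ-prisms} is fine).

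The gap: you insist that each $S^H_i$ occupy a single coset of the unique subgroup $H$ of order $n$. Since the pairwise differences within $S^H_i$ include $2b_i$, this forces $2b_i\in H$, and because $[\Gamma:H]=2m-1$ is odd it forces $b_i\in H$, i.e.\ $b_i\equiv 0\pmod{2m-1}$. But then $nb_i\equiv 0\pmod{(2m-1)n}$, and for even $n$ the cycle-edge sums $2a_i+(2j+1)b_i$ at $j$ and $j+n/2$ coincide, so the $n$ cycle edges of row $i$ receive at most $n/2$ distinct labels. This is an intra-row collision that no choice of the $a_i$, no ``mixed parities of slope across the rows,'' and no arrangement of the vertical edges can repair; it is precisely the obstruction that makes even cycles non-harmonious (Theorems~\ref{thm:GS-odd-cycles} and~\ref{thm:all-cycle}), reappearing in every row. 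If instead you drop the one-coset requirement and allow $b_i\notin H$, then your formula for $S^H_i$ is wrong: the wrap-around edge $v_{i,n-1}v_{i,0}$ has label $2a_i+(n-1)b_i$, not $2a_i+(2n-1)b_i$, so the row's edge labels are not the arithmetic progression you analyze and the coset bookkeeping collapses. Either way, a labeling that is affine in $j$ within each row cannot yield cases (1)--(3); the actual constructions of Jungreis--Reid (and of Gallian--Prout--Winters for $Y_{2,n}$) are not row-affine but use more intricate interleaved patterns, and that is exactly the part your sketch defers to unspecified ``explicit templates.''
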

 
We prove results on generalized prisms showing $Y_{m,n}$ is $\Gamma$-harmonious for some $\Gamma$. 

Seoud and Youssef~\cite{Seoud-Youssef} proved the following two results on open webs (or helm graphs) $OW_{2,n}$ and closed webs (or closed helms) $CW_{2,n}$. In~\cite{Gallian}, Gallian cites~\cite{Gnanajothi} which states the following result on open webs $OW_{3,n}$.

\begin{thm}[Seoud and Youssef~\cite{Seoud-Youssef}]
\label{thm:SY-open webs}
  The open web $OW_{2,n}$ is harmonious for all $n\geq 3$.
\end{thm}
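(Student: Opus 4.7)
The plan is to exhibit, for each $n\geq 3$, an explicit harmonious injection $f:V(OW_{2,n})\to Z_{3n}$. Write the vertex set as $\{c\}\cup\{v_0,\ldots,v_{n-1}\}\cup\{u_0,\ldots,u_{n-1}\}$, where $c$ is the hub, the $v_i$ form the rim cycle with edges $v_iv_{i+1}$ (indices mod $n$), the spokes are $cv_i$, and each pendant $u_i$ is attached to $v_i$ by the matching edge inherited from $Y_{2,n}$; since the bottom cycle has been removed in passing from $CW_{2,n}$ to $OW_{2,n}$, each $u_i$ has degree one. The $3n$ edges split into three natural classes of size $n$: spokes, rim edges, and pendants.

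The core idea is to direct each of the three edge classes into a distinct coset of the index-$3$ subgroup $3Z_n$ of $Z_{3n}$, thereby guaranteeing pairwise disjoint labels between the classes. I would set $f(c)=0$, $f(v_i)=3i+1$, and $f(u_i)=3i+2$. Then the spoke labels form the coset $1+3Z_n$; the rim labels are $6i+5$ for $i=0,\ldots,n-2$ together with the wrap-around label $f(v_{n-1})+f(v_0)=3n-1$, all lying in $2+3Z_n$; the pendant labels are $6i+3=3(2i+1)$, all lying in $3Z_n$.

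For $n$ odd, vertex injectivity is immediate from the three distinct residues modulo $3$. The pendant labels hit every multiple of $3$ in $Z_{3n}$ because $2$ is a unit modulo $n$, so $2i+1$ cycles through all of $Z_n$. The rim labels cover $2+3Z_n$ by the same mechanism: $\gcd(6,3n)=3$ when $n$ is odd, so $\{6i\bmod 3n:0\leq i<n\}$ is a bijection onto $3Z_n$, and the single value missing when $i=n-1$ is dropped is exactly the one recovered by the wrap-around edge. Thus the three cosets are filled bijectively, yielding a harmonious labeling.

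The main obstacle is the case $n$ even: now $\gcd(6,3n)=6$, so the rim labels $6i+5$ begin colliding after $n/2$ terms and the clean coset construction breaks. I would repair this with a separate labeling, most likely splitting into subcases by $n\bmod 4$. A natural plan is to partition the rim cycle into two arcs whose labels follow different arithmetic progressions, or alternatively to start from Graham and Sloane's harmonious labeling of the wheel $W_n$ (which handles even $n$) and then choose pendant labels greedily to fill the remainder of $Z_{3n}$. The parity difficulty is unsurprising, since it mirrors the well-known obstruction to $C_n$ being harmonious when $n$ is even.
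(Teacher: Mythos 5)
Your construction for odd $n$ is correct: the labels $f(c)=0$, $f(v_i)=3i+1$, $f(u_i)=3i+2$ send the spokes, rim edges, and pendant edges bijectively onto the three cosets of $\langle 3\rangle\cong Z_n$ in $Z_{3n}$, and the verification (distinctness within each class via $2$ being a unit mod $n$) is sound. Note that this is exactly an instance of the coset mechanism the paper uses for $OW_{m,n}$ in Theorem~\ref{thm:open_web} with $H=\langle 3\rangle$ and $\Gamma/H\cong Z_3$; the paper itself does not prove the statement under review (it only cites Seoud and Youssef), and its own machinery likewise recovers only the odd case (Corollary~\ref{cor:harm-open-webs}).

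The genuine gap is the even case, which the theorem asserts for all $n\geq3$ and which your argument does not establish. This is not a minor patch: the coset approach cannot be repaired for even $n$, because any labeling of the rim by elements of a single coset of $\langle 3\rangle$ would force the rim edge labels into one coset isomorphic to $Z_n$, and an even cycle is not $Z_n$-harmonious (Theorem~\ref{thm:GS-odd-cycles}); so a structurally different labeling of the rim is unavoidable. Your two proposed repairs are only sketches. Splitting the rim into two arcs with different progressions is plausible but is precisely the nontrivial construction that would need to be written down and checked. The suggestion to start from a harmonious labeling of $W_n$ and choose pendant labels ``greedily'' conceals a real obstruction: you must find a bijection $\sigma$ from the rim vertices to the $n$ unused edge labels such that the induced pendant vertex labels $\sigma(v_i)-f(v_i)$ are pairwise distinct and avoid the $n+1$ labels already used on the wheel. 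That is a system-of-distinct-representatives condition, not something a greedy choice guarantees. As it stands, the proposal proves the theorem only for odd $n$.
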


\begin{thm}[Gnanajothi~\cite{Gnanajothi}]
\label{thm:Gna-open webs}
The open web $OW_{3,n}$ is harmonious when $n$ is odd and $n\geq 3$.
\end{thm}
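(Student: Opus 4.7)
\noindent My plan is to exhibit an explicit $Z_{5n}$-harmonious labeling of $OW_{3,n}$ that exploits the coset structure of $Z_{5n}$ with respect to its unique subgroup of order $n$. Denote the vertices by an apex $c$, a top cycle $u_0,\dots,u_{n-1}$, a middle cycle $v_0,\dots,v_{n-1}$ joined to the top cycle via spokes $u_iv_i$, and pendant vertices $w_0,\dots,w_{n-1}$ attached via edges $v_iw_i$. The total number of edges is $q=5n$, partitioned into five natural classes of size $n$ each: the top cycle, the middle cycle, the spokes $u_iv_i$, the pendants $v_iw_i$, and the apex edges $cu_i$. Since $Z_{5n}$ has the cyclic subgroup $H=\langle 5\rangle$ of order $n$, splitting $Z_{5n}$ into the five cosets $j+H$ for $j\in\{0,1,2,3,4\}$, the strategy is to place vertex labels so that each of the five edge classes sweeps out one entire coset.

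The candidate labeling is
\[
f(u_i)=5i,\qquad f(v_i)=1+5i,\qquad f(w_i)=2+5i,\qquad f(c)=4,
\]
with all arithmetic mod $5n$. Since the three layers occupy the cosets $0+H$, $1+H$, and $2+H$ completely, and $f(c)=4$ lies in the untouched coset $4+H$, injectivity is immediate. The edge-sum computations split cleanly: the top-cycle edges yield $5(2i+1)\in 0+H$; the middle-cycle edges yield $2+5(2i+1)\in 2+H$; the spokes $u_iv_i$ yield $1+10i\in 1+H$; the pendants $v_iw_i$ yield $3+10i\in 3+H$; and the apex edges $cu_i$ yield $4+5i\in 4+H$. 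To see that each class bijects onto its target coset I invoke the hypothesis $n$ odd: then $\gcd(2,n)=1$, so the maps $i\mapsto 2i$ and $i\mapsto 2i+1$ permute $Z_n$, and consequently $10i\bmod 5n$ and $5(2i+1)\bmod 5n$ enumerate $H$ bijectively. The five image cosets $\{0+H,\,1+H,\,2+H,\,3+H,\,4+H\}$ then partition $Z_{5n}$, giving the desired bijective edge labelling.

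The verification is essentially mechanical once the construction is in hand; I anticipate no real obstacle for odd $n$, because the graph splits exactly into five edge classes of size $n$, $Z_{5n}$ has exactly five cosets of $H$ of size $n$, and the odd-parity hypothesis makes every doubling map a bijection on $Z_n$ simultaneously. The one conceptual point worth flagging is precisely why this coset-filling scheme demands $n$ odd: if $n$ were even, the doubling maps would each miss half of $Z_n$, and all five coset-sweep claims would collapse at once, so any even-$n$ result (not needed here) would require a genuinely different labeling ansatz.
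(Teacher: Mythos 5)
Your labeling is correct and checks out (including the wrap-around edges), and it is essentially the same approach the paper takes: the paper cites Gnanajothi for this particular statement but subsumes it via Theorem~\ref{thm:open_web} and Corollary~\ref{cor:harm-open-webs}, whose proof is exactly your coset-layering idea—place each layer of vertices in a coset of the order-$n$ subgroup so that each of the five edge classes sweeps out one coset of $\langle 5\rangle\leq Z_{5n}$, with $n$ odd guaranteeing the doubling maps on $Z_n$ are bijections. Your construction is just the $m=3$, $\Gamma=Z_{5n}$, $H=\langle 5\rangle$, $\beta=1$ specialization of that general scheme, differing only cosmetically (apex placed in the coset $4+H$ rather than labeled by the identity).
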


\begin{thm}[Seoud and Youssef~\cite{Seoud-Youssef}]
\label{thm:SY-closed webs}
  The closed web $CW_{2,n}$ is harmonious for all odd $n\geq 3$. 
\end{thm}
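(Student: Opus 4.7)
Since $CW_{2,n}$ has $q=4n$ edges, I would look for a $Z_{4n}$-harmonious labelling $f$. Write the vertices as the hub $c$, the top cycle $u_0,\ldots,u_{n-1}$ (joined to $c$ by spokes), the bottom cycle $v_0,\ldots,v_{n-1}$, and the rungs $u_iv_i$; the four edge classes each have $n$ edges. The plan is to split these edges by parity of their sums: set $f(c)=0$ and choose $f(u_i)$ odd and $f(v_i)$ even for every $i$. Then spoke and rung sums are odd while top- and bottom-cycle sums are even, and since $n$ is odd the two cosets of $2Z_{4n}$ have exactly $2n$ elements each, matching the $2n$ odd and $2n$ even edges.

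For the explicit assignment I would take $f(u_i)=2i+1$. The spoke sums then cover $\{1,3,\ldots,2n-1\}$, and the top-cycle sums form $\{4,8,\ldots,4n-4\}\cup\{2n\}$ in $Z_{4n}$; these $n$ values are pairwise distinct because $n$ is odd, so $2n$ is not a multiple of $4$. It remains to pick $f(v_i)=2b_i$ with $b_i\in Z_{2n}\setminus\{0\}$ so that the rung sums $2(i+b_i)+1$ exhaust the missing odd residues $\{2n+1,2n+3,\ldots,4n-1\}$ and the bottom-cycle sums $2(b_i+b_{i+1})$ exhaust the missing even residues. The rung condition is equivalent to $\{i+b_i\bmod 2n\}_{i=0}^{n-1}=\{n,n+1,\ldots,2n-1\}$, which I would solve by writing $b_i\equiv n+\sigma(i)-i\pmod{2n}$ for some permutation $\sigma$ of $\{0,\ldots,n-1\}$. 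Distinctness of the $b_i$ forces $\sigma$ to be a complete mapping of $Z_n$, which exists because $n$ is odd; a natural choice is $\sigma(i)=2i\bmod n$, giving the closed form $b_i=n+i$ for $0\le i\le(n-1)/2$ and $b_i=i$ for $(n+1)/2\le i\le n-1$.

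The main obstacle is then the bottom-cycle coverage condition: one must show that with the $b_i$'s above, the set $\{b_i+b_{i+1}\bmod 2n\}_{i=0}^{n-1}$ consists exactly of the $n$ elements of $Z_{2n}$ whose doubles in $Z_{4n}$ are missing from the top-cycle sums. I expect this to reduce to a short case analysis on whether $i$ and $i+1$ lie in the same half of $\{0,\ldots,n-1\}$ or straddle the midpoint $(n-1)/2$; the small cases $n=3,5,7$ can be checked by direct computation, and in general the three resulting sub-sums $b_i+b_{i+1}$ (namely $2i+2n+1$, $2i+1$, or $i$, depending on the case) should cover the required size-$n$ set. Once this coverage is established, the parity split guarantees that spoke/rung sums do not collide with top/bottom-cycle sums, and $f$ is injective because $f(c)=0$ while the remaining labels form distinct nonzero odd and nonzero even subsets of $Z_{4n}$; thus $f$ is $Z_{4n}$-harmonious and $CW_{2,n}$ is harmonious for every odd $n\ge 3$.
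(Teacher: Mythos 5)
Your construction is correct, but it is a genuinely different route from the one in the paper. The paper does not prove this statement directly (it is quoted from Seoud--Youssef); instead it subsumes it in Theorem~\ref{thm:closed_web} and Corollary~\ref{cor:harm-closed-webs}: take the subgroup $H$ of order $n$ in $\Gamma=Z_{4n}$ (here $H=\langle 4\rangle$, with $\Gamma/H\cong Z_4$ cyclic, generated by $\beta+H$), label the hub with the identity and the $j$-th layer by $g(x_i)+j\beta$ where $g$ is an $H$-harmonious labeling of $C_n$; then the spokes, top cycle, rungs and bottom cycle fill the four cosets of $H$, and the same scheme works for every $m\geq2$ and for any $\Gamma$ with such a subgroup. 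You instead build an explicit $Z_{4n}$-labeling tailored to $m=2$: a parity split (cosets of $2Z_{4n}$ only, not of an index-$4$ subgroup -- note your top-cycle sums mix residues $0$ and $2$ mod $4$), with the bottom labels obtained from a complete mapping of $Z_n$. I checked your closed form $b_i=n+i$ for $0\leq i\leq(n-1)/2$, $b_i=i$ otherwise: the bottom-cycle sums are $2i+1$ for indices within each half, $2n\equiv 0$ at the straddle $i=(n-1)/2$, and $2n-1$ at the wrap edge, so modulo $2n$ they give exactly $\{0\}\cup\{1,3,\dots,2n-1\}\setminus\{n\}$, which is precisely the complement of the top-cycle sums among the even residues; so the coverage you left as an expectation does hold (your parenthetical ``$2i+2n+1$, $2i+1$, or $i$'' has a slip in the third case, and ``forces $\sigma$ to be a complete mapping'' should read ``is guaranteed if $\sigma$ is a complete mapping''). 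The trade-off: your argument is elementary and self-contained but specific to $CW_{2,n}$ and $Z_{4n}$, whereas the paper's coset-filling argument yields at one stroke the harmoniousness of $CW_{m,n}$ for all odd $n$ and all $m\geq2$, and more generally $\Gamma$-harmoniousness for suitable non-cyclic groups.
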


We will generalize results on $OW(m,n)$ and $CW(m,n)$ by showing that they are $\Gamma$-harmonious for certain $\Gamma$.

\section{Superwheels}\label{sec:superwheels}

The superwheel $SW_{k,n}$ is a graph with a cycle of length $C_n$ and has $k$ vertices not adjacent to each other but all  adjacent to every vertex in $C_n$. The number of edges of $SW_{k,n}$ is $n(k+1)$. When $k=1$, then $SW_{k,n}$ is the wheel $W_n$.

\begin{thm}\label{thm:superwheel}
	The superwheel $SW_{k,n}$ is $\Gamma$-harmonious with $\Gamma$ of order $n(k+1)$ if there exists a subgroup $H$ of order $n$ that satisfies conditions in Theorem~\ref{thm:all-cycle}.
\end{thm}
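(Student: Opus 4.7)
The plan is to exploit the coset structure of $H$ in $\Gamma$. Since $|H|=n$ and $|\Gamma|=n(k+1)$, Lagrange's theorem gives exactly $k+1$ cosets of $H$ in $\Gamma$, namely $H,\ \alpha_1+H,\ \alpha_2+H,\ \dots,\ \alpha_k+H$ for some coset representatives $\alpha_1,\dots,\alpha_k$ (chosen one from each non-identity coset). By hypothesis, $H$ satisfies Theorem~\ref{thm:all-cycle}, so $H$ is harmonious in the sense of Definition~\ref{def:harm-group} (as a consequence of Theorem~\ref{thm:harm-groups}); write its harmonious sequence as $(g_1,g_2,\dots,g_n)$.

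Next I would describe the labeling. Label the cycle vertices $v_1,v_2,\dots,v_n$ of $C_n$ consecutively by $f(v_j)=g_j$, and label the $k$ remaining (non-cycle) vertices $u_1,u_2,\dots,u_k$ by $f(u_i)=\alpha_i$. Then all cycle-vertex labels lie in $H$ and are distinct (the $g_j$ are a permutation of $H$), while each $u_i$ receives a label outside $H$ and the $\alpha_i$'s lie in pairwise distinct non-identity cosets, so all vertex labels are distinct.

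Now I would verify that the induced edge labels exhaust $\Gamma$. The cycle edges $v_jv_{j+1}$ (indices mod $n$) receive labels $g_j+g_{j+1}$, which by the definition of a harmonious sequence form precisely the set $H$. For each $i$, the $n$ spoke edges $u_iv_j$ receive labels $\alpha_i+g_j$; as $j$ ranges over $\{1,\dots,n\}$ and $g_j$ ranges over $H$, these labels form exactly the coset $\alpha_i+H$. Taking the union, the set of all edge labels equals $H\cup(\alpha_1+H)\cup\dots\cup(\alpha_k+H)=\Gamma$, and since all of these cosets are disjoint, the induced map $w\colon E\to\Gamma$ is a bijection.

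I do not expect any real obstacle here: the only non-trivial ingredient is the existence of the harmonious sequence on $H$, which is handed to us by the hypothesis and Theorem~\ref{thm:harm-groups}; the rest is the observation that adding a fixed coset representative to all elements of $H$ yields exactly that coset. The only point worth being careful about is that the coset $H$ itself is used for the cycle vertices (not for any $\alpha_i$), so that the vertex labels and the edge labels together do not collide across cosets.
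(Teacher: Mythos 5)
Your proposal is correct and follows essentially the same route as the paper: label the cycle by an $H$-harmonious labeling (equivalently, the harmonious sequence of $H$), label the $k$ hub vertices by representatives of the $k$ nontrivial cosets of $H$, and observe that the cycle edges realize $H$ while the spokes from each hub realize its coset, so the disjoint cosets cover $\Gamma$ exactly once. Your explicit check that vertex labels are injective is a small point the paper leaves implicit, but there is no substantive difference in approach.
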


\begin{proof} 
    Let $v_1,v_2,\dots,v_n$ be the vertices of the $C_n$ in $SW_{k,n}$ and $u_1,u_2,\dots,u_k$ be the the vertices in its center.
    For $ 1\leq j\leq k$ let $E_j=\{u_jv_i|v_i\in V(C_n)\}$. 

	Assume $H$ is a subgroup of order $n$ that satisfies Theorem \ref{thm:all-cycle}. Then we can find a $\Gamma$-harmonious labeling of $C_n$ with $ H$.
    Let $ (h_1,h_2,\dots,h_n)$ be a $H$-harmonious labeling of $C_n$, where for $1\leq i\leq n, v_i$ is labeled with the element $h_i\in H$. We can write $\Gamma$ as the following disjoint union, $\Gamma= H\cup g_1+H\cup g_2+H\cup\dots\cup g_k+H$ for $ g_1,g_2,\dots,g_k$ in $G- H$. For $1\leq j\leq k$, label the central vertex $ u_j$ with $g_j$. Thus, for a fixed $j$, the edge $u_jv_i$ in $E_j$ would be labeled with $g_j+h_i$, which is an element of $g_j +H$.
    
    Since each $ g_j+h_i$ are distinct, there would be no repetition among the edge labels of the edges in $E_j$. 
    Thus, the edges of $C_n$ have labels equal to elements of $H$ and the edges of $E_j$ are labeled with those of $g_j+H$. Since the cosets are disjoint, there would be no repetition in the edge labelings across all edges of $SW_{k,n}$
    Thus, we have our $ \Gamma$-Harmonious labeling of $ SW_{k,n}$.
\end{proof}

We illustrate the method by two examples.

\begin{exm}\label{exm:SW_3,5}
	A $Z_5\oplus Z_5$ labeling of the superwheel $SW_{3,5}$.
\end{exm}

\begin{figure}[h!]\label{fig:SW_{3,5}}
	\centering
	
	\scalebox{0.55}{
\begin{tikzpicture}
    \newcommand{\pathLabels}[1]{%
        \ifcase#1 (0,0)\or (2,0)\or (4,0)\or (6,0)\or (8,0)\fi
    }
    \newcommand{\CenterLabels}[1]{%
        \ifcase#1 (1,0)\or (1,1)\or (0,1)\fi
    }

    \foreach \i in {0,...,4} {
        \node[draw, circle, inner sep=2pt, label=below:{\pathLabels{\i}}] (P\i) at (2*\i,0) {};
    }
    \draw (P0) -- (P1) -- (P2) -- (P3) -- (P4);

    \draw (P0) to[bend right=35] (P4);

    \foreach \i in {0,1,2} {
        \node[draw, circle, inner sep=2pt, label=above:{\CenterLabels{\i}}] (v\i) at (2.5*\i+1,3) {};
    }

    \foreach \i in {0,1,2} {
        \foreach \j in {0,...,4} {
            \draw (v\i) -- (P\j);
        }
    }
\end{tikzpicture}

} 
	\caption{Labeling of $SW_{3,5} $ with $Z_{10}\oplus  Z_2$}
\end{figure}
\newpage
\begin{exm}\label{exm:SW_3,8}
	A $Z_4\oplus Z_2\oplus Z_4$ labeling of the superwheel $SW_{3,8}$.
\end{exm}

\begin{figure}[h!]\label{fig:SW_{3,8}}
	\centering
	
	\scalebox{0.55}{
\begin{tikzpicture}
    \newcommand{\pathLabels}[1]{%
        \ifcase#1 (0,0,0)\or (2,0,0)\or (1,0,0)\or (2,1,0)\or (3,0,0)\or (3,1,0)\or (1,1,0)\or (0,1,0) \fi
    }
    \newcommand{\CenterLabels}[1]{%
        \ifcase#1 (0,0,1)\or (0,0,2)\or (0,0,3)\fi
    }

    \foreach \i in {0,...,7} {
        \node[draw, circle,  inner sep=2pt, label=below:{\pathLabels{\i}}] (P\i) at (2*\i,0) {};
    }
    \draw (P0) -- (P1) -- (P2) -- (P3) -- (P4)--(P5)--(P6)--(P7);

    \draw (P0) to[bend right=27] (P7);

    \foreach \i in {0,1,2} {
        \node[draw, circle, inner sep=2pt, label=above:{\CenterLabels{\i}}] (v\i) at (3.5*\i+3,3) {};
    }

    \foreach \i in {0,1,2} {
        \foreach \j in {0,...,4,5,6,7} {
            \draw (v\i) -- (P\j);
        }
    }
\end{tikzpicture}

} 
	\caption{Labeling of $SW_{3,5} $ with $Z_4\oplus  Z_2\oplus Z_4$}
\end{figure}

\section{Windmill graphs}\label{sec:windmills}

\tikzstyle{none}=[fill=none, shape=circle]
\tikzstyle{new style 1}=[fill=none, draw=black, shape=circle]

\tikzstyle{new edge style 1}=[fill=black, draw={rgb,255: red,191; green,128; blue,64}, -]

The \emph{Dutch windmill graph} $D_n^m$ is a graph consisting of $m$ copies of $C_n$ with a common vertex, called the \emph{central vertex}. 
The $n$-cycles in $D_n^m$ are also called \emph{blades}. 
Therefore, the number of edges of $ D_n^m$ is $nm$.
When $m=1$,  $D_n^m$ is a cycle; thus, in our examination, $m>1$.

We know that $m\mid|\Gamma|$. Therefore, there must be a subgroup $H$ of order $m$ and index $n$. Because $\Gamma$ is Abelian, $H$ is normal. Therefore, $K=\Gamma/H$ is a quotient group. Denote the coset representatives of $K$ by 
$\gamma_{0}=\mathbf{0}, \gamma_{1},\gamma_{2},\dots,\gamma_{n-1}$, where $\mathbf{0}$ is the identity element of $\Gamma$ (and thus of $H$ as well). Since there are $m$ copies of $C_n$, we can denote each $C_n$ as $C^j$ where $0\leq j\leq m-1$. Let $u$ be the central vertex. In each $C^j$, we denote other vertices by $v^j_i$ for $i=1,2,\dots,{n-1}$ such that the the edges in $C^j$ are $e^j_0=uv^j_1, e^j_i=v^j_i v^j_{i+1}$ for $i=1,2,\dots, n-2$, and $e^j_{n-1}=v^j_{n-1}u$.

Before we start constructing the vertex labeling of $D^m_n$ with $\Gamma$, it is convenient to prove the following observation. We will show that there exists a valid labeling of $C^j$ with $K$. The coset representatives of the elements of $K$ in the vertex labeling of $v_i^j$ in the $K$-harmonious labeling of $C^j$ will be later used in a construction of a valid labeling of $D_n^m$ with $\Gamma$.

\begin{obs}
\label{obs:blade labeling odd}
    If $n$ is odd then each $C^j$ has a $K$-harmonious labeling where $u$ is labeled with $H$.
\end{obs}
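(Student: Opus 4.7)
The plan is to apply Theorem~\ref{thm:harm-groups} to the quotient $K=\Gamma/H$, produce a harmonious sequence of $K$, and cyclically rotate it so that the identity element of $K$ lands at the central vertex $u$.

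Since $|K|=n$ is odd and $n\geq 3$, the Sylow $2$-subgroup of $K$ is trivial, $K$ is not an elementary $2$-group, and $K$ is non-trivial. Hence Theorem~\ref{thm:harm-groups} yields a harmonious sequence $(g_1,g_2,\dots,g_n)$, which by Definition~\ref{def:harm-group} is a permutation of all elements of $K$ whose consecutive sums $g_1+g_2,\ g_2+g_3,\ \dots,\ g_{n-1}+g_n,\ g_n+g_1$ exhaust $K$. In particular, there is a unique index $t$ with $g_t=\mathbf{0}_K$, i.e., $g_t$ is the coset $H$.

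A cyclic rotation of the sequence preserves the multiset of consecutive sums (the wrap-around sum $g_n+g_1$ is explicitly included in Definition~\ref{def:harm-group}), so $(g_t,g_{t+1},\dots,g_{t+n-1})$ with subscripts modulo $n$ is again a harmonious sequence, now starting with $H$. I would then set $f(u)=g_t=H$ and $f(v_i^j)=g_{t+i}$ for $i=1,\dots,n-1$; the edge labels along $e_0^j=uv_1^j$, along $e_i^j=v_i^jv_{i+1}^j$ for $1\leq i\leq n-2$, and along $e_{n-1}^j=v_{n-1}^ju$ are then precisely the consecutive sums of the rotated sequence, and hence form a bijection onto $K$. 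There is essentially no obstacle beyond invoking Theorem~\ref{thm:harm-groups}; the only subtle point, namely that cyclic rotations preserve the harmonious property, is immediate from the inclusion of the wrap-around sum in the definition.
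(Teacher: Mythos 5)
Your proof is correct and follows essentially the same route as the paper: invoke Theorem~\ref{thm:harm-groups} (via Theorem~\ref{thm:all-cycle}) for the odd-order Abelian group $K=\Gamma/H$ and place the elements of $K$ consecutively around $C^j$ with the identity coset $H$ at $u$. The only difference is that you spell out the cyclic-rotation step that the paper leaves implicit when it asserts a $K$-harmonious labeling with $u$ labeled by $H$ can be found, which is a welcome bit of added detail.
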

\begin{proof}
    Since $K$ is an Abelian group of order $n$, there exists a $K$-harmonious labeling of each $C^j$. 
    Thus, we can find a $K$-harmonious labeling $g$ of $C^j$ such that $u$ is labeled with $H$ and $v_i^j$ is labeled with $g(v^j_i)=\gamma_i+H$ for $\gamma_i+H\in K=\Gamma/H$, where $\gamma_{i}\notin H$. The induced edge label of edge  $e$ is denoted by $w_g(e)$.
    Notice that while we are labeling the cycle with cosets, the labeling is well-defined since they are elements of the group $K$. 
 

\
  Let $ \Tilde{\gamma}_0+H=\gamma_1+H$ and $\Tilde{\gamma}_{n-1}+H=\gamma_{n-1}+H$
    and for $1\leq i\leq n-2$,
     $\Tilde{\gamma}_{i}+H=\gamma_i+\gamma_{i+1}+H$.
    In this scheme, $w(e_i^j)=\Tilde{\gamma}_i+H$. 
%
  \end{proof}    
Let us look at a specific example how we would use this lemma to construct a vertex labeling of $D_5^3$ with $\Gamma={Z}_5\oplus {Z}_3$ that would induce a $\Gamma$-harmonious labeling.

Here, we have, $ H=\{0\}\oplus{Z}_3$ and $K=\{H,(1,0)+H,(2,0)+H,(3,0)+H,(4,0)+H\}$.
In Figure \ref{D_5^3graph} (a),  we have  a $K$-harmonious labeling for each $C^j$, where $u$ is labeled with $H$, and in each $ C^j$, $ v_i^j$ is labeled with $ (i,0)+H$.
 In Figure \ref{D_5^3graph}(b), $u$ is labeled with $(0,0)$. We have chosen a distinct element of $\{0\}\oplus {Z}_3$ for each $C^j$ specifically, $h_1=(0,0)$ for $C^1$, $h_2=(0,1)$ for $C^2$, and $h_3=(0,2)$ for $C^3$. 
We labeled the vertices $v_i^1$ with $ (1,0)+(0,0),(2,0)+(0,0),(3,0)+(0,0),(4,0)+(0,0)$.
Similarly, we labeled the vertices $v_i^2$ with $\{i,0)+(0,1)\}$ and vertices $v_i^3$ with $\{i,0)+(0,2)\} $. We can check that this construction produces a $\Gamma$-harmonious labeling of $D_5^3$.

\begin{figure}[h!]\label{fig:windmill D^3_5}
    \centering
    
	\scalebox{0.55}{
    \begin{tikzpicture}
    	\begin{pgfonlayer}{nodelayer}
		\node [style=new style 1] (0) at (-1.75, 0) {};
		\node [style=new style 1] (1) at (-3.25, 2) {};
		\node [style=new style 1] (2) at (-0.25, 2) {};
		\node [style=new style 1] (3) at (-3.25, 4) {};
		\node [style=new style 1] (4) at (-0.25, 4) {};
		\node [style=new style 1] (13) at (-4.25, 0.75) {};
		\node [style=new style 1] (14) at (-4.25, -1.75) {};
		\node [style=new style 1] (15) at (-6.25, 0.75) {};
		\node [style=new style 1] (16) at (-6.25, -1.75) {};
		\node [style=new style 1] (17) at (0.75, 0.75) {};
		\node [style=new style 1] (18) at (0.75, -1.75) {};
		\node [style=new style 1] (19) at (2.75, 0.75) {};
		\node [style=new style 1] (20) at (2.75, -1.75) {};
		\node [style=none] (21) at (-1.75, -1) {};
		\node [style=none] (22) at (-4.25, -2.5) {$(4,0)+H$};
		\node [style=none] (23) at (-6.25, -2.5) {$(3,0)+H$};
		\node [style=none] (24) at (-4.25, 2) {$(4,0)+H$};
		\node [style=none] (25) at (-4.25, 0) {$(1,0)+H$};
		\node [style=none] (26) at (0.75, 0) {$(4,0)+H$};
		\node [style=none] (27) at (-3.25, 4.5) {$(3,0)+H$};
		\node [style=none] (28) at (-0.25, 5) {};
		\node [style=none] (29) at (0.75, 2) {};
		\node [style=none] (30) at (0.75, -2.5) {$(1,0)+H$};
		\node [style=none] (31) at (2.75, -2.5) {$(2,0)+H$};
		\node [style=none] (32) at (-7.25, 0.75) {$(2,0)+H$};
		\node [style=none] (33) at (3.75, 0.75) {$(3,0)+H$};
		\node [style=none] (34) at (-1.75, -1) {$H$};
		\node [style=none] (35) at (0.75, 2) {$(1,0)+H$};
		\node [style=none] (36) at (-0.25, 4.5) {$(2,0)+H$};
		\node [style=new style 1] (37) at (-1.5, -10.25) {};
		\node [style=new style 1] (38) at (-3, -8.25) {};
		\node [style=new style 1] (39) at (0, -8.25) {};
		\node [style=new style 1] (40) at (-3, -6.25) {};
		\node [style=new style 1] (41) at (0, -6.25) {};
		\node [style=new style 1] (42) at (-4, -9.5) {};
		\node [style=new style 1] (43) at (-4, -12) {};
		\node [style=new style 1] (44) at (-6, -9.5) {};
		\node [style=new style 1] (45) at (-6, -12) {};
		\node [style=new style 1] (46) at (1, -9.5) {};
		\node [style=new style 1] (47) at (1, -12) {};
		\node [style=new style 1] (48) at (3, -9.5) {};
		\node [style=new style 1] (49) at (3, -12) {};
		\node [style=none] (50) at (-1.5, -11.25) {};
		\node [style=none] (51) at (-4, -12.75) {$(4,1)$};
		\node [style=none] (52) at (-6, -12.75) {$(3,1)$};
		\node [style=none] (53) at (-4, -8.25) {$(4,0)$};
		\node [style=none] (54) at (-4, -10.25) {$(1,1)$};
		\node [style=none] (55) at (1, -10.25) {$(4,2)$};
		\node [style=none] (56) at (-3, -5.75) {$(3,0)$};
		\node [style=none] (57) at (0, -5.25) {};
		\node [style=none] (58) at (1, -8.25) {};
		\node [style=none] (59) at (1, -12.75) {$(1,2)$};
		\node [style=none] (60) at (3, -12.75) {$(2,2)$};
		\node [style=none] (61) at (-7, -9.25) {$(2,1)$};
		\node [style=none] (62) at (4, -9.5) {$(3,2)$};
		\node [style=none] (63) at (-1.5, -11.25) {$(0,0)$};
		\node [style=none] (64) at (1, -8.25) {$(1,0)$};
		\node [style=none] (65) at (0, -5.75) {$(2,0)$};
		\node [style=none] (66) at (-1.75, 3) {$C^1$};
		\node [style=none] (67) at (-5.5, -0.5) {$C^2$};
		\node [style=none] (68) at (1.75, -0.5) {$C^3$};
		\node [style=none] (69) at (-1.5, -7) {$C^1$};
		\node [style=none] (70) at (-5.25, -10.75) {$C^2$};
		\node [style=none] (71) at (2, -10.75) {$C^3$};
		\node [style=none] (72) at (-1.5, -4) {};
		\node [style=none] (73) at (-1.5, -4) {(a)};
		\node [style=none] (74) at (-1.5, -14) {};
		\node [style=none] (75) at (-1.5, -14) {$(b)$};
	\end{pgfonlayer}
	\begin{pgfonlayer}{edgelayer}
		\draw (0) to (2);
		\draw (4) to (2);
		\draw (3) to (4);
		\draw (3) to (1);
		\draw (1) to (0);
		\draw (13) to (0);
		\draw (0) to (14);
		\draw (15) to (16);
		\draw (16) to (14);
		\draw (15) to (13);
		\draw (0) to (18);
		\draw (0) to (17);
		\draw (17) to (19);
		\draw (18) to (20);
		\draw (19) to (20);
		\draw (37) to (39);
		\draw (41) to (39);
		\draw (40) to (41);
		\draw (40) to (38);
		\draw (38) to (37);
		\draw (42) to (37);
		\draw (37) to (43);
		\draw (44) to (45);
		\draw (45) to (43);
		\draw (44) to (42);
		\draw (37) to (47);
		\draw (37) to (46);
		\draw (46) to (48);
		\draw (47) to (49);
		\draw (48) to (49);
	\end{pgfonlayer}
\end{tikzpicture}
} 
 \caption{
 (a) $K$-harmonious labeling of each $C^j$ in $D_5^3$
 where $K=\Gamma/H$, $\Gamma={Z}_5\oplus {Z}_3$ and 
$H=\{0\}\oplus {Z}_3$\newline 
 (b) $\Gamma$-harmonious labeling of $D_5^3$
}
\label{D_5^3graph}
\end{figure}


\begin{thm}\label{thm:oddwindmill}
    When $n$ and $m$ are odd, then $D_n^m$ is $\Gamma$-harmonious for any Abelian group $\Gamma$ of order $mn$.
\end{thm}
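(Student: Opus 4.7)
The plan is to follow the blueprint illustrated by Figure~\ref{D_5^3graph}. Since $\Gamma$ is Abelian of order $mn$, the Fundamental Theorem of Finite Abelian Groups yields a subgroup $H\leq\Gamma$ of order $m$; the quotient $K=\Gamma/H$ then has odd order $n$, and Theorem~\ref{thm:all-cycle} together with Observation~\ref{obs:blade labeling odd} furnishes a $K$-harmonious labeling of the cycle that we will apply identically on every blade: the central vertex $u$ receives the identity coset $H$, and on each blade $C^j$ the remaining vertex $v^j_i$ receives a fixed coset $\gamma_i+H$, chosen so that $\gamma_1+H,\dots,\gamma_{n-1}+H$ are pairwise distinct non-identity cosets whose induced edge cosets exhaust $K$.

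To lift this coset labeling to an actual labeling $f\colon V(D^m_n)\to\Gamma$, I would enumerate the elements of $H$ as $h^1,\dots,h^m$, set $f(u)=\mathbf{0}$, and define $f(v^j_i)=\gamma_i+h^j$. Injectivity of $f$ follows because two equal labels $\gamma_i+h^j=\gamma_{i'}+h^{j'}$ force the same coset $\gamma_i+H=\gamma_{i'}+H$ and hence also $h^j=h^{j'}$. Computing induced edge labels, the two edges of blade $j$ incident to $u$ receive $\gamma_1+h^j$ and $\gamma_{n-1}+h^j$, while an interior edge $e^j_i=v^j_iv^j_{i+1}$ receives $(\gamma_i+\gamma_{i+1})+2h^j$. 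Because the underlying $K$-harmonious labeling is the same on every blade, within any fixed coset of $H$ exactly one edge of each blade appears, and these $m$ edges are always of the same type (incident to $u$, or interior); so their $\Gamma$-labels in that coset differ only through $h^j$ or only through $2h^j$.

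The anticipated obstacle, and the precise place where the odd parity of $m$ is used, is to show that the multiset $\{2h^j\}_{j=1}^m$ still enumerates $H$. This is immediate because multiplication by $2$ is a group automorphism of the odd-order Abelian group $H$, so $h\mapsto 2h$ is a bijection on $H$. Combined with the trivial fact that $\{h^j\}_{j=1}^m=H$ by construction, this makes the $m$ labels falling in each coset pairwise distinct; since the $n$ cosets of $H$ are disjoint, the induced map $w\colon E(D^m_n)\to\Gamma$ is the required bijection. The hypothesis that $n$ is odd enters only at the outset, through Theorem~\ref{thm:all-cycle}, to guarantee the $K$-harmonious labeling of the blades.
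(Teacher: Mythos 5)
Your proposal is correct and takes essentially the same route as the paper: choose a subgroup $H$ of order $m$, use the common $K=\Gamma/H$-harmonious coset labeling of the blades (Observation~\ref{obs:blade labeling odd}) with the central vertex on the identity coset, and lift it via $f(v^j_i)=\gamma_i+h^j$ with the $h^j$ running over $H$. Your coset-by-coset verification, resting on the fact that doubling is an automorphism of the odd-order group $H$, is just a cleaner packaging of the paper's within-blade/across-blade case analysis and makes explicit exactly where the oddness of $m$ enters (compare the remark following Theorem~\ref{thm:evenwindmill}).
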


\begin{proof}
We will use Observation~\ref{obs:blade labeling odd} to construct a vertex labeling $f$ of $D_n^m$ with $\Gamma$ that will induce a $\Gamma$-harmonious labeling of $ D_n^m$. For every $n$-cycle $C^j$, choose a distinct element $h_j$ in $H$. 
This is possible because $|H|=m$.

We start by labeling the central vertex $u$ with $\mathbf{0}$, the identity element of $\Gamma$. If a vertex $v_{i}^j$ in  some $C^j$ is labeled with $g(v^j_i)=\gamma_i+H$ in the $K$-harmonious labeling of $C^j$ constructed in Observation~\ref{obs:blade labeling odd}, then we label it with $f(v^j_i)=\gamma_i+h_j$, where $h_j$ is the chosen element of $H$ for $C^j$.

First we want to show that this construction produces a well-defined vertex labeling. Suppose that for some 
$v^{j_1}_{i_1}\neq v^{j_2}_{i_2}$ we have 
$f(v^{j_1}_{i_1})=f(v^{j_2}_{i_2})$.

$f(v^{j_1}_{i_1})=\gamma_{i_1}+h_{j_1}= \gamma_{i_2}+h_{j_2}=f(v^{j_2}_{i_2})$ and

%
\begin{equation}\label{eq:1}
	(\gamma_{i_1}+h_{j_1})+H=(\gamma_{i_2}+h_{j_2})+H.
\end{equation}
	Because $h_{j_1},h_{j_2}\in H$, we know that 
\begin{equation}\label{eq:2}
	(\gamma_{i_1}+h_{j_1})+H= \gamma_{i_1}+H
\end{equation}
	and
\begin{equation}\label{eq:3}
	(\gamma_{i_2}+h_{j_2})+H= \gamma_{i_2}+H.
\end{equation}
	Combining equations~\ref{eq:1},~\ref{eq:2} and~\ref{eq:3}, we obtain that
$$
	\gamma_{i_1}+H=\gamma_{i_2}+H.
$$
	But then
$$
	g(v^{j_1}_{i_1})=\gamma_{i_1}+H=\gamma_{i_2}+H=g(v^{j_2}_{i_2})
$$ 
	and we have a contradiction, because in Observation~\ref{obs:blade labeling odd} we have shown that $g$ is injective. Moreover, if for some $v^{j_0}_{i_0}$ we have $f(v^{j_0}_{i_0})=\gamma_{i_0}+h_{j_0}= \mathbf{0},$ then $ \gamma_{i_0}\in H$, which again contradicts Observation~\ref{obs:blade labeling odd}. Therefore, this construction induces an injective vertex labeling.

	Second, we must check if this construction induces a $\Gamma$-harmonious labeling of $D_n^m$. We have to check whether there are any repetitions in the edge labels $w_f(e)$ within a blade and across any two blades. 
	
	In $C^j$ when $w_g(e^j_i)=\tilde{\gamma}_i+H$, then $w_f(e^j_i)=\Tilde{\gamma}_i+kh_j$,where $k\in \{1,2\}$. In particular, for edges $uv^j_0$ and $v^j_{n-1}$  we have $k=1$, otherwise, $k=2$. 
	
	For a fixed $j$ and $0\leq i\leq n-1$, we claim that each $w_f(e^j_i)=\Tilde{\gamma}_i+kh_j$ is distinct. For $1\leq i\leq n-2$, each $ \Tilde{\gamma}_i+2h_j$ is distinct because each label $\tilde{\gamma}_i+H$ in $C^j$ was distinct. Moreover, $\Tilde{\gamma}_0+h_j\neq \Tilde{\gamma}_{n-1}+h_j$ for the same reason.
	
	Now suppose that for a fixed $j$ and some $i$,  
	$1\leq i\leq n-2$, we have 
	$\Tilde{\gamma}_0+h_j=\Tilde{\gamma}_{i}+2h_j$. Then 
	$\Tilde{\gamma}_{0}=\Tilde{\gamma}_{i}+h_j$
	and
$$
	w_g(e_0)=\Tilde{\gamma}_{0}+H=(\Tilde{\gamma}_{i}+h_j)+H
		=\Tilde{\gamma}_{i}+H =w_g(e_i),
$$	
	which is a contradiction by Observation~\ref{obs:blade labeling odd}. Similarly for 
	$ 2\leq i\leq n-2, w_f(e^j_{n-1})=\Tilde{\gamma}_{n-1}+h_j\neq 	\Tilde{\gamma}_{i}+2h_j=w_f(e^j_i)$.


	Finally suppose for $0\leq j_1<j_2\leq m$, and $0\leq i_1\leq i_2\leq n-1$ two edge labels $w_f(e^{j_1}_{i_1})$ and $w_f(e^{j_2}_{i_2})$ across  cycles $C^{j_1}$ and $C^{j_2}$ are the same. 
 
 
	Then
	$\Tilde{\gamma}_{i_1}+k_1 h_{j_1}= \Tilde{\gamma}_{i_2}+k_2 h_{j_2}$, where $k_1,k_2\in \{1,2\}$. Then similarly as above we have
$$
	\Tilde{\gamma}_{i_1}=\Tilde{\gamma}_{i_2}+k_2h_{j_2}-k_1 h_{j_1}
		=\Tilde{\gamma}_{i_2}+h^*
$$ 
	for some $h^*\in H$. It follows that
$$
	w_g(e_{i_1})=\Tilde{\gamma}_{i_1}+H=(\Tilde{\gamma}_{i_2}+h^*)+H
		=\Tilde{\gamma}_{i^2}+H =w_g(e_{i_2}),
$$	
	a contradiction again.	Thus, each edge label is distinct and the proof is complete. 
\end{proof}
When $n$ is even, we have a weaker theorem.

\begin{thm}
\label{thm:evenwindmill}
For $n$ even and $m$ odd, $D_n^m$ is $\Gamma$-harmonious when $|\Gamma|=mn$, and
$\Gamma$ has a  subgroup $H$ or order $m$ such that $K=\Gamma/H$ satisfies the conditions in Theorem \ref{thm:all-cycle}.
\end{thm}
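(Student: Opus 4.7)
The plan is to follow the proof of Theorem~\ref{thm:oddwindmill} almost verbatim, substituting a version of Observation~\ref{obs:blade labeling odd} that is valid for even $n$. The only point in the earlier argument that exploited parity of $n$ was invoking harmoniousness of $K=\Gamma/H$, and this was used solely to produce a $K$-harmonious labeling of each blade in which the central vertex $u$ received the identity coset $H$. Under the hypothesis of the present theorem, Theorem~\ref{thm:all-cycle} still gives that $C_n$ is $K$-harmonious, so the overall strategy remains available.

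The first step, and the one I expect to be the main obstacle, is to show that this $K$-harmonious labeling of $C_n$ can be chosen so that $u$ receives the identity of $K$. The labeling produced by Theorem~\ref{thm:all-cycle} is obtained by placing a harmonious sequence $(g_1,g_2,\dots,g_n)$ of $K$ consecutively around the cycle. Since this sequence is a permutation of $K$, the identity equals some $g_t$, and the cyclic rotation $(g_t,g_{t+1},\dots,g_n,g_1,\dots,g_{t-1})$ has exactly the same set of consecutive pairwise sums (including the wrap-around sum) and so is again a harmonious sequence. Placing this shifted sequence on $C_n$ with $u$ at position~$1$ gives the labeling I want, and the auxiliary elements $\tilde\gamma_i$ together with the induced edge labels $\tilde\gamma_i+H$ can then be defined exactly as in Observation~\ref{obs:blade labeling odd}.

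With this analogue in hand, the rest of the argument is a direct transcription of the proof of Theorem~\ref{thm:oddwindmill}. I would choose distinct $h_1,\dots,h_m\in H$ (possible because $|H|=m$), label $u$ with $\mathbf{0}$, and label each $v_i^j$ with $\gamma_i+h_j$. Injectivity of $f$ follows by reducing modulo $H$ back to the injective $K$-harmonious labeling. Distinctness of edge labels within a single blade and across two distinct blades is checked identically: any purported collision reduces modulo $H$ to a coincidence of $K$-edge labels, forcing the indices $i_1,i_2$ to agree; what remains inside $H$ is a relation $k_1h_{j_1}=k_2h_{j_2}$ with $k_1,k_2\in\{1,2\}$, and the oddness of $m$ (so that $H$ contains no element of order~$2$) forces $h_{j_1}=h_{j_2}$, contradicting the choice of distinct $h_j$'s. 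No use of the parity of $n$ is required past the first step, so the proof closes.
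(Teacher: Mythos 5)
Your proposal is correct and follows essentially the same route as the paper, whose proof of this theorem is just the observation that the hypothesis on $K=\Gamma/H$ yields a $K$-harmonious labeling of each blade with $u$ on the identity coset, after which the construction of Theorem~\ref{thm:oddwindmill} is repeated verbatim. If anything, you are more explicit than the paper: the rotation argument for placing the identity at $u$, and the cancellation $2h_{j_1}=2h_{j_2}\Rightarrow h_{j_1}=h_{j_2}$ (valid since $|H|=m$ is odd, and the only place $m$'s parity enters), are exactly the points the paper leaves implicit or relegates to the remark following the theorem.
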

\begin{proof}
    Let $m$ be odd and $n$ be even.
    If $\Gamma$ has a subgroup of order $m$ such that $K=\Gamma/H$ satisfies the conditions in Theorem \ref{thm:all-cycle}.
    Then, each $C^j$ is $K$-harmonious labeling where $u$ is labeled with $H$. 
   The construction then is same as in Theorem \ref{thm:oddwindmill}.
\end{proof}

When $m$ is even, this construction does not work.
As discussed in Theorem \ref{thm:oddwindmill}, the label of edge $e_i^j$ is $\Tilde{\gamma}_i+kh_j$ for $k=\{1,2\}$ and $h_j\in H$. We know $|H|=m$. Thus in $H$, we can have two distinct elements $h_j$ and $h_{j'}$ such that $2h_j=2h_{j'}$.

\section{Generalized prisms}\label{sec:prisms}

For generalized prisms, our result is again more general for $n$ odd. That is due to the fact that even cycles are not $\Gamma$-harmonious for all $\Gamma$ of the proper order, while for $n$ odd is it $\Gamma$-harmonious for all Abelian groups $\Gamma$ of order $n$ (see Theorem~\ref{thm:harm-groups} or~\ref{thm:all-cycle}).

Recall that the number of edges of the generalized prism $Y_{m,n}$ is $(2m-1)n$. The main idea of our construction is the following. Given a group $\Gamma$ of order $(2m-1)n$ and a subgroup $H$ of $\Gamma$ of order $n$ satisfying assumptions of Theorem~\ref{thm:all-cycle} and assume that there is an $H$-harmonious labeling of $C_n$. Then we label the vertices of each $C_n$ by elements of a coset $\beta+H$ where $\beta$ is a generator of the cyclic quotient group $\Gamma/H$ in a way corresponding to the original $H$-harmonious labeling. This way the edge labels of every ``layer'', that is, either edges of a particular cycle or all edges between two consecutive cycles form a coset of $\Gamma$.

We illustrate the method in two examples.


\begin{exm}\label{exm:C_5 and Y_3,5}
	We present a labeling of $Y_{3,5}$ with $Z_5\oplus Z_5$, starting with a labeling of $C_5$ with $Z_5$.	
\end{exm}


\begin{figure}[h!]\label{fig: C_5 with Z_5}
	\centering
	
	\scalebox{0.55}{\begin{tikzpicture}
    \def\Radiusone{3.5}
    \def\Radiusoneinv{4.0}
 \def\labelsone{{"0", "1", "2", "3", "4"}}

    \foreach \i in {0,1,2,3,4} {
        \node[draw, circle, inner sep=2pt] (layerone\i) at (72*\i:\Radiusone cm) {};
    }
    \draw (layerone0) -- (layerone1) -- (layerone2) -- (layerone3) -- (layerone4) -- (layerone0);

\foreach \i in {0,1,2,3,4} {
        \node at (72*\i:\Radiusoneinv cm) {\pgfmathparse{\labelsone[\i]}\pgfmathresult};
    }  
\end{tikzpicture}
}
	
	\caption{Labeling of $C_5$ with $Z_5$}
\end{figure}
\begin{figure}[h!]\label{fig:C_5 with H}
	\centering
	
	\scalebox{0.55}{\begin{tikzpicture}
    \def\Radiusone{3.5}
    \def\Radiusoneinv{4.2}
 \def\labelsone{{"$(0,0)$", "$(1,0)$", "$(2,0)$", "$(3,0)$", "$(4,0)$"}}
    \foreach \i in {0,1,2,3,4} {
        \node[draw, circle, inner sep=2pt] (layerone\i) at (72*\i:\Radiusone cm) {};
    }
    \draw (layerone0) -- (layerone1) -- (layerone2) -- (layerone3) -- (layerone4) -- (layerone0);

\foreach \i in {0,1,2,3,4} {
        \node at (72*\i:\Radiusoneinv cm) {\pgfmathparse{\labelsone[\i]}\pgfmathresult};
    }  
\end{tikzpicture}
} 
	\caption{Labeling of $C_{5} $ with $Z_5\oplus \{0\}$}
\end{figure}
\begin{figure}[h!]\label{fig:genprism Y_{3,5}}
	\centering
	
	\scalebox{0.55}{
 \begin{tikzpicture}
    \def\Radiusone{2.0}
    \def\Radiusoneinv{2.3}
    \def\Radiustwo{3.5}
    \def\Radiustwoinv{3.8}
    \def\Radiusthree{5}
    \def\Radiusthreeinv{5.5}  
 \def\labelsone{{"(0,0)", "(1,0)", "(2,0)", "(3,0)", "(4,0)"}}
\def\labelstwo{{"(1,1)", "(2,1)", "(3,1)", "(4,1)", "(0,1)"}}
\def\labelsthree{{"(2,2)", "(3,2)", "(4,2)", "(0,2)", "(1,2)"}}   

    \foreach \i in {0,1,2,3,4} {
        \node[draw, circle, inner sep=2pt] (layerone\i) at (72*\i:\Radiusone cm) {};
    }
    \draw (layerone0) -- (layerone1) -- (layerone2) -- (layerone3) -- (layerone4) -- (layerone0);

\foreach \i in {0,1,2,3,4} {
        \node at (72*\i+12:\Radiusoneinv cm) {\pgfmathparse{\labelsone[\i]}\pgfmathresult};
    }

    \foreach \i in {0,1,2,3,4} {
        \node[draw, circle, inner sep=2pt] (layertwo\i) at (72*\i:\Radiustwo cm) {};
    }
    \draw (layertwo0) -- (layertwo1) -- (layertwo2) -- (layertwo3) -- (layertwo4) -- (layertwo0);
\foreach \i in {0,1,2,3,4} {
        \node at (72*\i+7:\Radiustwoinv cm) {\pgfmathparse{\labelstwo[\i]}\pgfmathresult};
    }

    \foreach \i in {0,1,2,3,4} {
        \node[draw, circle, inner sep=2pt] (layerthree\i) at (72*\i:\Radiusthree cm) {};
    }
    \draw (layerthree0) -- (layerthree1) -- (layerthree2) -- (layerthree3) -- (layerthree4) -- (layerthree0);
\foreach \i in {0,1,2,3,4} {
        \node at (72*\i:\Radiusthreeinv cm) {\pgfmathparse{\labelsthree[\i]}\pgfmathresult};
    }

    \foreach \i in {0,1,2,3,4} {
        \draw (layerone\i) -- (layertwo\i);
         \draw (layertwo\i) -- (layerthree\i);
        
    }
    
\end{tikzpicture}
}
	
	\caption{Labeling of $Y_{3,5}$ with $Z_5\oplus Z_5$}
\end{figure}

\newpage

\begin{exm}\label{exm:C_8 and Y_3,8}
	We present a labeling of $Y_{3,8}$ with $Z_4\oplus Z_2\oplus Z_5$, starting with a labeling of $C_8$ with $Z_4\oplus Z_2$.	
\end{exm}


\begin{figure}[h!]\label{fig: C_8 with Z_8}
	\centering
	
	\scalebox{0.55}{
\begin{tikzpicture}
 \def\Radiusone{3.5}
    \def\Radiusoneinv{4.2}

      \def\labelsone{{"(0,0)", "(2,0)", "(1,0)", "(2,1)", "(3,0)", "(3,1)", "(1,1)", "(0,1)"}}

    \foreach \i in {0,...,7} {
        \node[draw, circle, inner sep=2pt] (layerone\i) at (45*\i:\Radiusone cm) {};
    }
    \draw (layerone0) -- (layerone1) -- (layerone2) -- (layerone3) -- (layerone4) -- (layerone5) -- (layerone6) -- (layerone7) -- (layerone0);

    \foreach \i in {0,...,7} {
        \node at (45*\i:\Radiusoneinv cm) {\pgfmathparse{\labelsone[\i]}\pgfmathresult};
    }

\end{tikzpicture}
}
	
	\caption{Labeling of $C_8$ with $Z_4\oplus Z_2$}
\end{figure}

\begin{figure}[h!]\label{fig:genprism Y_{3,8}}
	\centering
	
	\scalebox{0.5}{
\begin{tikzpicture}
    \def\Radiusone{2.0}
    \def\Radiusoneinv{2.4}
    \def\Radiustwo{3.5}
    \def\Radiustwoinv{3.9}
    \def\Radiusthree{5}
    \def\Radiusthreeinv{5.7}

    \def\labelsone{{"(0,0,0)", "(2,0,0)", "(1,0,0)", "(2,1,0)", "(3,0,0)", "(3,1,0)", "(1,1,0)", "(0,1,0)"}}
    
    \def\labelstwo{{ "(2,0,1)", "(1,0,1)", "(2,1,1)", "(3,0,1)", "(3,1,1)", "(1,1,1)", "(0,1,1)","(0,0,1)"}}
    \def\labelsthree{{ "(1,0,2)", "(2,1,2)", "(3,0,2)", "(3,1,2)", "(1,1,2)", "(0,1,2)","(0,0,2)", "(2,0,2)"}}

    \foreach \i in {0,...,7} {
        \node[draw, circle, inner sep=2pt] (layerone\i) at (45*\i:\Radiusone cm) {};
    }
    \draw (layerone0) -- (layerone1) -- (layerone2) -- (layerone3) -- (layerone4) -- (layerone5) -- (layerone6) -- (layerone7) -- (layerone0);

    \foreach \i in {0,...,7} {
        \node at (45*\i+15:\Radiusoneinv cm) {\pgfmathparse{\labelsone[\i]}\pgfmathresult};
    }

    \foreach \i in {0,...,7} {
        \node[draw, circle, inner sep=2pt] (layertwo\i) at (45*\i:\Radiustwo cm) {};
    }
    \draw (layertwo0) -- (layertwo1) -- (layertwo2) -- (layertwo3) -- (layertwo4) -- (layertwo5) -- (layertwo6) -- (layertwo7) -- (layertwo0);

    \foreach \i in {0,...,7} {
        \node at (45*\i+10:\Radiustwoinv cm) {\pgfmathparse{\labelstwo[\i]}\pgfmathresult};
    }

    \foreach \i in {0,...,7} {
        \node[draw, circle, inner sep=2pt] (layerthree\i) at (45*\i:\Radiusthree cm) {};
    }
    \draw (layerthree0) -- (layerthree1) -- (layerthree2) -- (layerthree3) -- (layerthree4) -- (layerthree5) -- (layerthree6) -- (layerthree7) -- (layerthree0);

    \foreach \i in {0,...,7} {
        \node at (45*\i:\Radiusthreeinv cm) {\pgfmathparse{\labelsthree[\i]}\pgfmathresult};
    }

    \foreach \i in {0,...,7} {
        \draw (layerone\i) -- (layertwo\i);
        \draw (layertwo\i) -- (layerthree\i);
    }
\end{tikzpicture}
} 
	\caption{Labeling of $Y_{3,8} $ with $Z_4\oplus Z_2\oplus Z_5$}
	
\end{figure}

\newpage

\begin{thm}\label{thm:all_prisms}
	Let $n\geq3,m\geq2$ and $\Gamma$ be an Abelian group of order $n(2m-1)$. If $\Gamma$ has a subgroup $H$ of order $n$ that satisfies the assumptions of Theorem~\ref{thm:all-cycle}, and the quotient group $\Gamma/H$ is cyclic, then prism $Y_{m,n}\cong P_m\Box C_n$ admits a $\Gamma$-harmonious labeling.
\end{thm}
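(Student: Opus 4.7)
The plan is to imitate the explicit labelings illustrated in Examples~\ref{exm:C_5 and Y_3,5} and~\ref{exm:C_8 and Y_3,8}. Applying Theorem~\ref{thm:all-cycle} to $H$, I first fix an $H$-harmonious labeling $(h_0,h_1,\dots,h_{n-1})$ of $C_n$, so that the consecutive sums $h_i+h_{(i+1)\bmod n}$ exhaust $H$. Because $\Gamma/H$ is cyclic of order $2m-1$, I then pick $\beta\in\Gamma$ with $\beta+H$ generating $\Gamma/H$; the $2m-1$ cosets of $H$ in $\Gamma$ are exactly $k\beta+H$ for $k=0,1,\dots,2m-2$.

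Denoting the vertex of the $k$-th cycle of $Y_{m,n}$ in position $i$ by $v_i^k$, the candidate labeling is
\[
   f(v_i^k)=h_{(i+k)\bmod n}+k\beta,\qquad 0\le k\le m-1,\ 0\le i\le n-1.
\]
Injectivity is immediate, since the vertex labels within cycle $C^k$ form a permutation of the coset $k\beta+H$, and these $m$ cosets are pairwise disjoint. The main verification, carried out layer by layer, is that edges inside $C^k$ receive labels of the form $h_{i+k}+h_{i+k+1}+2k\beta$ and, by the $H$-harmonious property of $(h_j)$, fill the coset $2k\beta+H$; while rung edges $v_i^kv_i^{k+1}$ receive $h_{i+k}+h_{i+k+1}+(2k+1)\beta$ and fill $(2k+1)\beta+H$. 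As $k$ varies, the coefficients $0,1,\dots,2m-2$ of $\beta$ cover all $2m-1$ cosets, so the $(2m-1)n$ edge labels give a bijection $E(Y_{m,n})\to\Gamma$, as required.

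The delicate point, and the reason for the index shift $i\mapsto i+k$ between consecutive cycles, is the treatment of rungs when $n$ is even. Without the shift, the rung $v_i^kv_i^{k+1}$ would receive $2h_i+(2k+1)\beta$, and the set $\{2h_i:0\le i\le n-1\}$ equals $H$ only when the doubling map on $H$ is a bijection, i.e.\ when $|H|$ is odd. Shifting the cycle labels by one position in each successive layer replaces the problematic sum $2h_i$ by the \emph{consecutive} sum $h_{i+k}+h_{i+k+1}$, which by the harmonious property of $(h_j)$ ranges over all of $H$ regardless of parity. This single trick is what lets one construction cover both alternatives of Theorem~\ref{thm:all-cycle} simultaneously, and it is the step I expect to require the most care to state cleanly.
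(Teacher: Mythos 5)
Your proposal is correct and is essentially the paper's own construction: your index shift $f(v_i^k)=h_{(i+k)\bmod n}+k\beta$ is exactly equivalent to the paper's device of labeling $f(x^j_i)=g(x_i)+j\beta$ while taking the rung edges ``diagonal'' ($x^j_i x^{j+1}_{i+1}$), so that rung sums are consecutive sums $w(x_ix_{i+1})+(2j+1)\beta$ rather than doubled labels. The layer-by-layer coset argument (cycles filling the even cosets $2k\beta+H$, rungs the odd cosets $(2k+1)\beta+H$, all $2m-1$ cosets distinct) matches the paper's proof.
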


\begin{proof}
	We will use the following notation. Each $n$-cycle $C^j$ for $j=0,1,\dots,m-1$ will consist of vertices $x^j_i$ with $i=1,2,\dots,n$ and edges $x^j_ix^j_{i+1}$, where $i=1,2,\dots, n$ and addition in the subscript is performed modulo $n$. The paths $P^i$ will consist of vertices $x^0_i, x^1_{i+1},\dots,x^{m-1}_{i+m-1}$ and edges 
	$x^j_i x^{j+1}_{i+1}$ where $i=1,2,\dots, n, j=0,1,\dots,m-2$ and addition in the subscript is again performed modulo $n$. Thus, the path edges can be seen as ``diagonal'', which is done for computational convenience.

	Let ${H}$ be a subgroup $\Gamma$ of order $n$ that satisfies the assumptions of Theorem~\ref{thm:all-cycle} and $K=\Gamma/{H}$ be cyclic of order $2m-1$ generated by a coset $\beta+H$.

	Hence there exists an $H$-harmonious labeling $g$ of $C_n=x_1,x_2,\dots,x_n$. 
	For each $x^j_i$ we now define
	$$
		f(x^j_i)=g(x_i) +j\beta.
	$$
	Then the label $w(x^j_i x^j_{i+1})$ of an edge $x^j_i x^j_{i+1}$ in cycle $C^j$ is defined as
	\begin{align*}
		w(x^j_i x^j_{i+1})	&= f(x^j_i) +f(x^j_{i+1})\\
							&= (g(x_i) +j\beta) +(g(x_{i+1}) +j\beta)\\ 
							&= g(x_i) +g(x_{i+1}) +2j\beta \\
							&= w(x_i x_{i+1}) +2j\beta.
	\end{align*}
	Now the set of all edge labels in $C^j$ is
	\begin{align*}
		W^j	&=\{w(x^j_i x^j_{i+1})\mid i=1,2,\dots,n	\} \\
			&=\{w(x_i x_{i+1}) +2j\beta \mid i=1,2,\dots,n	\}.
	\end{align*}

	Because $g$ is an $H$-harmonious labeling, the set of all weights $w(x_i x_{i+1})$ forms the subgroup $H$ itself and 
	\begin{align*}
		W^j	&=\{w(x_i x_{i+1}) +2j\beta \mid i=1,2,\dots,n	\} =2j\beta+H,
	\end{align*}
	an even coset in  $\Gamma/H$. Consequently, the labels of all edges in cycles $C^0,C^1,\dots, C^m$ form all $m$ even cosets 
	$H, 2\beta +H,\dots, 2(m-1)\beta +H$ of the cyclic group $\Gamma/H$.

	For paths, the label of $x^j_i x^{j+1}_{i+1}$ is defined as
	\begin{align*}
		w(x^j_i x^{j+1}_{i+1})	&= f(x^j_i) +f(x^{j+1}_{i+1})\\ 
							&= (g(x_i) +j\beta) +(g(x_{i+1}) +(j+1)\beta) \\
							&= g(x_i) +g(x_{i+1}) +(2j+1)\beta \\
							&= w(x_i x_{i+1}) +(2j+1)\beta.	
	\end{align*}
	The set of labels of the path edges between cycles $C^j$ and $C^{j+1}$ 	for $j=0,1,\dots,m-2$ is then
	\begin{align*}
		U^j	&=\{w(x^j_i x^{j+1}_{i+1})\mid i=1,2,\dots,n	\} \\
			&=\{w(x_i x_{i+1}) +(2j+1)\beta \mid i=1,2,\dots,n	\}
	\end{align*}
	the coset $(2j+1)\beta+H$. Therefore, the weights of all path edges form all $m-1$ odd cosets $\beta+H, 3\beta +H,\dots, (2m-3)\beta +H$ of $\Gamma/H$.
	
	Because we have $j=0,1,\dots,m-1$, we obtained $2m-1$ distinct cosets and their union forms the group $\Gamma$. This completes the proof.
\end{proof} 

For convenience, we express the above result in more explicit form, describing the group $\Gamma$ in the nested form.

\begin{cor}\label{cor:odd-prisms}
	Let $n\geq3$ be odd, $m\geq2$ and $\Gamma$ an Abelian group of order $(2m-1)n$ with a cyclic subgroup $K$ of order $2m-1$. Then the prism $Y_{m,n}\cong C_n\Box P_m$ admits a $\Gamma$-harmonious labeling.    
\end{cor}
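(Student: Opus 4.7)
The plan is to reduce the corollary to Theorem~\ref{thm:all_prisms} by producing a subgroup $H$ of $\Gamma$ of order $n$ whose quotient $\Gamma/H$ is cyclic of order $2m-1$, and then verifying that such $H$ automatically meets the hypothesis of Theorem~\ref{thm:all-cycle}.

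First, I would invoke Theorem~\ref{thm:fact-group-isom-to-subgroup}(2) with the given cyclic subgroup $K$ of order $2m-1$. That theorem guarantees the existence of a subgroup $H\le\Gamma$ with $K\cap H=\{e\}$ and $\Gamma/H\cong K$. In particular, $\Gamma/H$ is cyclic of order $2m-1$, so
\[
	|H|=\frac{|\Gamma|}{|\Gamma/H|}=\frac{(2m-1)n}{2m-1}=n,
\]
which is exactly the order needed to label a copy of $C_n$.

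Next, I would check the hypothesis of Theorem~\ref{thm:all-cycle} for this $H$. Since $n$ is odd, condition~(1) of that theorem is satisfied vacuously (no parity obstruction arises), so $C_n$ admits an $H$-harmonious labeling for every Abelian $H$ of order $n$. In particular, the subgroup $H$ produced above supports such a labeling.

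With both hypotheses of Theorem~\ref{thm:all_prisms} in hand, namely that $\Gamma$ contains a subgroup $H$ of order $n$ satisfying Theorem~\ref{thm:all-cycle} and that $\Gamma/H$ is cyclic, I would apply Theorem~\ref{thm:all_prisms} directly to conclude that $Y_{m,n}\cong C_n\Box P_m$ admits a $\Gamma$-harmonious labeling. I do not anticipate any genuine obstacle here; the entire argument is essentially a bookkeeping exercise of matching the corollary's hypotheses to the theorem's. The only point that deserves a brief explicit remark is that Theorem~\ref{thm:fact-group-isom-to-subgroup}(2) is what lets us go from the mere existence of a cyclic subgroup $K$ of order $2m-1$ to the existence of a subgroup $H$ whose index realizes that cyclic quotient — without it, a subgroup of order $n$ inside $\Gamma$ need not have cyclic quotient.
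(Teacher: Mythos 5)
Your proposal is correct and follows essentially the same route as the paper's own proof: invoke Theorem~\ref{thm:fact-group-isom-to-subgroup}(2) to obtain a subgroup $H$ of order $n$ with $\Gamma/H\cong K$ cyclic, note that oddness of $n$ makes Theorem~\ref{thm:all-cycle} apply to $H$, and conclude via Theorem~\ref{thm:all_prisms}. Your added remark on why the cyclic subgroup $K$ must be converted into a cyclic quotient is a fair clarification, though calling condition~(1) of Theorem~\ref{thm:all-cycle} ``vacuously'' satisfied is a slight misnomer --- it is satisfied directly since $n$ is odd.
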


\begin{proof}
    By Theorem~\ref{thm:fact-group-isom-to-subgroup} part 2, there exists a subgroup $H$ such that $K\approx \Gamma/H$. Indeed, $H$ is of order $n$ and $n$ is odd. Thus, by Theorem~\ref{thm:all-cycle} there exists a $H$-harmonious labeling of $C_n$. The result now follows from Theorem~\ref{thm:all_prisms}.
\end{proof}

\newpage
\begin{cor}\label{cor:even-prisms}
	Let $n\geq4$ be even, $m\geq2$ and $\Gamma$ an Abelian group {of order $(2m-1)n$}  
	written in the nested form as $\Gamma=Z_{n_1}\oplus Z_{n_2}\oplus\dots\oplus Z_{n_t}$ where $t\geq2$, with a cyclic subgroup $K$ of order $2m-1$. If
	\begin{itemize}
	\item[(1)] 	$n_1\equiv0\pmod4$ and $n_2$ is even, or 
	\item[(2)]	$n_2\geq4$ is even,
	\end{itemize}
	then the prism $Y_{m,n}\cong C_n\Box P_m$ admits a $\Gamma$-harmonious labeling.    
\end{cor}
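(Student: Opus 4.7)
The strategy follows Corollary~\ref{cor:odd-prisms}: invoke Theorem~\ref{thm:fact-group-isom-to-subgroup}(2) on the cyclic subgroup $K \leq \Gamma$ to produce a subgroup $H \leq \Gamma$ with $|H| = n$ and $\Gamma/H \cong K$ (cyclic), and then apply Theorem~\ref{thm:all_prisms}. The extra task, compared with the odd case, is to verify that $H$ fulfills the hypothesis of Theorem~\ref{thm:all-cycle} when $n$ is even---equivalently, by Theorem~\ref{thm:harm-groups}, that the Sylow $2$-subgroup of $H$ is non-cyclic and $H$ is not an elementary $2$-group.

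The key observation is that since $|K| = 2m-1$ is odd, $|H|$ and $|\Gamma|$ share the same $2$-part, so the Sylow $2$-subgroup of $H$ coincides with $\Gamma_2$, the Sylow $2$-subgroup of $\Gamma$ itself. From the nested form $\Gamma = Z_{n_1} \oplus \cdots \oplus Z_{n_t}$ we read off $\Gamma_2 \cong Z_{2^{a_1}} \oplus \cdots \oplus Z_{2^{a_t}}$, where $a_i$ is the $2$-adic valuation of $n_i$; since $n_{i+1} \mid n_i$, we have $a_1 \geq a_2 \geq \cdots \geq a_t$. Under either hypothesis $n_2$ is even, so $a_2 \geq 1$, and combined with $t \geq 2$ this makes $\Gamma_2$ non-cyclic, settling the first requirement.

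For the elementary-$2$-group question I would split on the two cases. In case~(1), $a_1 \geq 2$, so $\Gamma_2 \leq H$ already contains an element of order $4$ and $H$ is not elementary. In case~(2), if additionally $a_2 \geq 2$ the previous argument still applies, so the genuinely new situation is $n_2 = 2d$ with $d \geq 3$ odd. Here I would bring in the odd-order part: decomposing each $Z_{n_i}$ via coprime factors yields a nested form $\Gamma_{\mathrm{odd}} \cong Z_{d_1} \oplus \cdots \oplus Z_{d_t}$ with $d_i = n_i/2^{a_i}$ and $d_{i+1} \mid d_i$, and now $d_2 = d \geq 3$ forces $\Gamma_{\mathrm{odd}}$ to be non-cyclic. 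Hence $|K| < |\Gamma_{\mathrm{odd}}|$, so the odd part of $H$---an index-$(2m-1)$ subgroup of $\Gamma_{\mathrm{odd}}$, of order $|\Gamma_{\mathrm{odd}}|/(2m-1)$---is non-trivial and contributes an element of odd prime order to $H$, ruling out $H$ being an elementary $2$-group. With $H$ verified, Theorem~\ref{thm:all_prisms} finishes the argument. The main obstacle is exactly this last case: the harmonious condition on $H$ cannot be read off $\Gamma_2$ alone, so one has to exploit the non-cyclicity of $\Gamma_{\mathrm{odd}}$ that is forced indirectly by the hypothesis $n_2 \geq 4$.
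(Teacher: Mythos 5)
Your argument is correct, and its skeleton is the same as the paper's: produce $H$ with $\Gamma/H\cong K$ via Theorem~\ref{thm:fact-group-isom-to-subgroup}(2), check that $H$ meets the even-cycle condition of Theorem~\ref{thm:all-cycle}, and conclude with Theorem~\ref{thm:all_prisms}. The difference lies in how the condition on $H$ is verified. The paper first uses Observation~\ref{obs:cyclic subgroup} to get $(2m-1)\mid n_1$ and then works with an explicit decomposition $H\cong Z_{n_1/(2m-1)}\oplus Z_{n_2}\oplus\dots\oplus Z_{n_t}$, from which a subgroup $Z_4\oplus Z_2$ (case~(1)) or $Z_2\oplus Z_{2s}$ with $s=n_2/2\geq2$ (case~(2)) is read off in one line each. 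You avoid any knowledge of the internal structure of $H$: since $[\Gamma:H]=2m-1$ is odd, $H$ contains the entire Sylow $2$-subgroup $\Gamma_2$ of $\Gamma$, and $\Gamma_2$ is non-cyclic because $a_1\geq a_2\geq1$ in the nested form. The price is the subcase $n_2\equiv2\pmod 4$ of case~(2), where the $2$-part alone cannot exclude $H$ being an elementary $2$-group; you close this gap correctly by noting that $d_2=n_2/2\geq3$ forces the odd part of $\Gamma$ to be non-cyclic, hence strictly larger than the cyclic subgroup $K$, so the odd part of $H$ (of order $|\Gamma_{\mathrm{odd}}|/(2m-1)>1$) is non-trivial. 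Both routes are sound: yours applies verbatim to \emph{any} $H$ with $\Gamma/H\cong K$, whereas the paper's stated form of $H$ is an additional structural claim (justified by choosing $H$ inside the $Z_{n_1}$ summand) that, once granted, handles case~(2) uniformly and avoids your extra odd-part argument.
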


\begin{proof}
	{By Observation~\ref{obs:cyclic subgroup}, $2m-1$ divides $n_1$. By Theorem~\ref{thm:fact-group-isom-to-subgroup} part 2, there is a subgroup $H$ such that $K\approx \Gamma/H$. Moreover, $H$ can be written as
	$H=Z_{n'_1}\oplus Z_{n_2}\oplus\dots\oplus Z_{n_t}$, where $n'_1=n_1/(2m-1)$.   
	
	When $n_1\equiv0\pmod4$ and $n_2$ is even, then also 
	$n'_1\equiv0\pmod4$ and $H$ contains a subgroup isomorphic to $Z_4\oplus Z_2$ and satisfies assumptions of Theorem~\ref{thm:harm-groups}. The result then follows from Theorems~\ref{thm:all-cycle} and ~\ref{thm:all_prisms}.}

	When $n_2\geq4$ is even, then $n_1$ is even as well. And because $n_2\geq4$, $H$ contains a subgroup isomorphic to $Z_2\oplus Z_{2s}$ for $s\geq2$. Therefore, the result follows again from Theorems~\ref{thm:harm-groups},~\ref{thm:all-cycle} and ~\ref{thm:all_prisms}.
\end{proof}

\section{Generalized web graphs}\label{sec:webs}

For computation convenience, we slightly change the notation here and number the cycles of the generalized prism $Y_{m,n}$ as $C^1,C^2,\dots, C^m$.

A \emph{generalized closed web} is then a graph arising from  $Y_{m,n}$ by adding a vertex $x^0_{0}$ and joining it by an edge to every vertex of the top cycle $C^1$ of $Y_{m,n}$. The remaining vertices will follow the notation from Section~\ref{sec:prisms}.

The number of edges and order of $\Gamma$ is now $2mn$.

\begin{thm}\label{thm:closed_web}
	Let $n\geq3,m\geq2$ and $\Gamma$ be an Abelian group of order $2mn$. If $\Gamma$ has a subgroup $H$ of order $n$ that satisfies assumptions of Theorem~\ref{thm:all-cycle} and the quotient group $\Gamma/H$ is cyclic, 
	then the generalized closed web $CW_{m,n}$ admits a $\Gamma$-harmonious labeling. 
\end{thm}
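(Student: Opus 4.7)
The approach is to mimic the construction of Theorem~\ref{thm:all_prisms}, exploiting the fact that $\Gamma/H$ is now cyclic of order $2m$ rather than $2m-1$, so we have exactly one extra coset at our disposal — precisely what we need to accommodate the new apex vertex $x^0_0$ and the $n$ edges joining it to $C^1$.

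Let $\beta+H$ generate $\Gamma/H$, and let $g$ be an $H$-harmonious labeling of $C_n$ on vertices $x_1,\dots,x_n$, which exists by Theorem~\ref{thm:all-cycle}. I would define
\[
f(x^j_i)=g(x_i)+j\beta\qquad\text{for } j=1,2,\dots,m,\ i=1,2,\dots,n,
\]
and $f(x^0_0)=\mathbf{0}$. Repeating the calculation from Theorem~\ref{thm:all_prisms}, the edges of $C^j$ receive labels forming the coset $2j\beta+H$ (for $j=1,\dots,m$), while the diagonal path edges between $C^j$ and $C^{j+1}$ form the coset $(2j+1)\beta+H$ (for $j=1,\dots,m-1$). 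Together these supply the $2m-1$ cosets $2\beta+H,3\beta+H,\dots,(2m-1)\beta+H$ together with $H$ itself, the latter arising as $2m\beta+H$ from the edges of $C^m$. The only coset left uncovered is $\beta+H$.

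This remaining coset will be supplied by the apex edges:
\[
w(x^0_0\,x^1_i)=\mathbf{0}+g(x_i)+\beta=g(x_i)+\beta,
\]
and as $i$ varies, $g(x_i)$ ranges over all of $H$ (since $g$ is a bijection of $V(C_n)$ onto $H$), so these $n$ labels are exactly the elements of $\beta+H$. Hence the induced edge labels realize every element of $\Gamma$ exactly once.

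It remains to verify injectivity of $f$. The vertices of $C^j$ lie in the coset $j\beta+H$, and these $m$ cosets are pairwise distinct for $j=1,\dots,m$ because $\beta+H$ has order $2m$ in $\Gamma/H$; for the same reason, $f(x^0_0)=\mathbf{0}\in H$ lies outside every $j\beta+H$ with $1\le j\le m$. I do not anticipate a real obstacle — once one recognizes that the ``missing'' coset in the prism argument is $\beta+H$, setting $f(x^0_0)=\mathbf{0}$ is essentially forced, and the remaining verification is parallel to the prism case.
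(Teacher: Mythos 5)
Your proposal is correct and coincides with the paper's own proof: the same labeling $f(x^j_i)=g(x_i)+j\beta$ with $f(x^0_0)=\mathbf{0}$, with the cycle and diagonal path edges covering all cosets of $H$ except $\beta+H$, which is exactly filled by the apex edges. The only difference is cosmetic — you spell out vertex injectivity explicitly, which the paper leaves implicit.
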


\begin{proof}
	
	We will use the following notation. The central vertex of degree $n$ will be denoted by $x^0_{0}$. Each $n$-cycle $C^j$ for $j=1,2,\dots,m$ will consist of vertices $x^j_i$ with $i=1,2,\dots,n$ and edges $x^j_ix^j_{i+1}$, where $i=1,2,\dots, n$ and addition in the subscript is performed modulo $n$. The paths $P^i$ will consist of vertices $x^0_0, x^1_{i},x^2_{i+1}\dots,x^{m-1}_{i+m-2}$ and edges 
	$x^0_0x^1_{i}$ and $x^j_{i+j-1} x^{j+1}_{i+j}$ where $i=1,2,\dots, n, \ j=1,2,\dots,m-1$ and addition in the subscript is again performed modulo $n$. 
	
	The number of edges in $CW_{m,n}$ is  $2mn$, which implies $|\Gamma|=2mn$. 

	By our assumption, $H$ satisfies conditions of Theorem~\ref{thm:all-cycle} and therefore there exists an $H$-harmonious labeling $g$ of $C_n=x_1,x_2,\dots,x_n$. 
	We first label the central vertex by the identity element of $\Gamma$, that is,
	$$
		f(x^0_0) = e.
	$$
	The remaining vertices will be labeled as
	$$
		f(x^j_i)=g(x_i) +j\beta.
	$$
	Then the label $w(x^j_i x^j_{i+1})$ of an edge $x^j_i x^j_{i+1}$ in cycle $C^j$ is defined as
	\begin{align*}
		w(x^j_i x^j_{i+1})	&= f(x^j_i) +f(x^j_{i+1})\\
							&= g(x_i) +g(x_{i+1}) +2j\beta \\
							&= w(x_i x_{i+1}) +2j\beta	
	\end{align*}
	and the set of all edge labels in $C^j$ is
	\begin{align*}
		W^j	&=\{w(x^j_i x^j_{i+1})\mid i=1,2,\dots,n	\} \\
			&=\{w(x_i x_{i+1}) +2j\beta \mid i=1,2,\dots,n	\} \\
			&= 2j\beta +H,
	\end{align*}
	an even coset in the quotient group $\Gamma/H$, because the set of weights of the edges in the cycle $C_n$ forms the subgroup $H$. This follows from our assumption that $H$ allows an $H$-harmonious labeling $g$. 
	
	Notice that in the cycle $C^m$ we have
	\begin{align*}
		W^m	&= 2m\beta +H = H,
	\end{align*}
	because $\Gamma/H$ is cyclic of order $2m$. Therefore, the labels of all edges in cycles $C^1,C^2,\dots, C^m$ form all $m$ even cosets 
	$H, 2\beta +H,\dots, (2m-2)\beta +H$ of the cyclic group $\Gamma/H$.

	For paths, the label of $x^j_i x^{j+1}_{i+1}$ is defined as
	\begin{align*}
		w(x^j_i x^{j+1}_{i+1})	&= f(x^j_i) +f(x^{j+1}_{i+1})\\ 
							&= g(x_i) +g(x_{i+1}) +(2j+1)\beta \\
							&= w(x_i x_{i+1}) +(2j+1)\beta.
	\end{align*}
	The set of labels of the edges incident with the central vertex is
	\begin{align*}
		U^0	&=\{w(x^0_0 x^{1}_{i})\mid i=1,2,\dots,n	\} \\
			&=\{f(x^0_0) + f(x^1_i) \mid i=1,2,\dots,n	\} \\
			&=\{e + g(x_i) +\beta \mid i=1,2,\dots,n	\} \\
			&= \beta +H,
	\end{align*}
	because $g$ is the $H$-harmonious labeling of the cycle $C_n$ and therefore a bijection from the vertex set of $C_n$ to $H$.

	For the remaining path edges, the set of labels between cycles $C^j$ and $C^{j+1}$ 	for $j=1,2,\dots,m-1$ is 
	\begin{align*}
		U^j	&=\{w(x^j_i x^{j+1}_{i+1})\mid i=1,2,\dots,n	\} \\
			&=\{w(x_i x_{i+1}) +(2j+1)\beta \mid i=1,2,\dots,n	\} \\
			&= (2j+1) +H.
	\end{align*}
	Hence, the weights of all path edges form all $m$ odd cosets $\beta+H, 3\beta +H,\dots, (2m-1)\beta +H$ of $\Gamma/H$.
	
	We obtained $2m$ distinct cosets and their union forms the group $\Gamma$. This completes the proof.
\end{proof}

\begin{exm}\label{exm:CW_3,5}
	A $Z_5\oplus Z_6$-harmonious labeling of the closed web $CW_{3,5}$.
\end{exm}

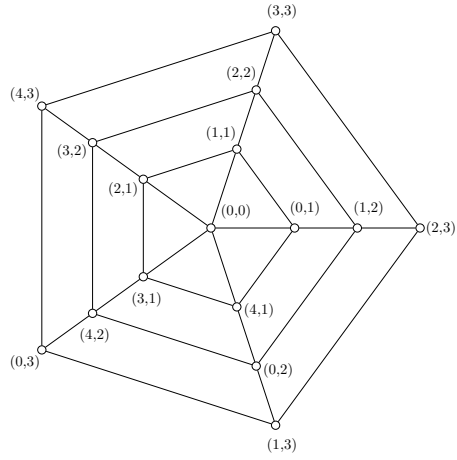
\begin{figure}[h!]\label{fig:CW_{3,5}}
	\centering
	
	\scalebox{0.55}{\begin{tikzpicture}
      \def\Radiusone{2.0}
    \def\Radiusoneinv{2.3}
    \def\Radiustwo{3.5}
    \def\Radiustwoinv{3.8}
    \def\Radiusthree{5}
    \def\Radiusthreeinv{5.5}  
 \def\labelsone{{"(0,1)", "(1,1)", "(2,1)", "(3,1)", "(4,1)"}}
\def\labelstwo{{"(1,2)", "(2,2)", "(3,2)", "(4,2)","(0,2)" }}
\def\labelsthree{{"(2,3)", "(3,3)", "(4,3)", "(0,3)", "(1,3)"}}

    \foreach \i in {0,1,2,3,4} {
        \node[draw, circle, inner sep=2pt] (layerone\i) at (72*\i:\Radiusone cm) {};
    }
    \draw (layerone0) -- (layerone1) -- (layerone2) -- (layerone3) -- (layerone4) -- (layerone0);

\foreach \i in {0,1,2,3,4} {
        \node at (72*\i+12:\Radiusoneinv cm) {\pgfmathparse{\labelsone[\i]}\pgfmathresult};
    }

    \foreach \i in {0,1,2,3,4} {
        \node[draw, circle, inner sep=2pt] (layertwo\i) at (72*\i:\Radiustwo cm) {};
    }
    \draw (layertwo0) -- (layertwo1) -- (layertwo2) -- (layertwo3) -- (layertwo4) -- (layertwo0);
\foreach \i in {0,1,2,3,4} {
        \node at (72*\i+7:\Radiustwoinv cm) {\pgfmathparse{\labelstwo[\i]}\pgfmathresult};
    }

    \foreach \i in {0,1,2,3,4} {
        \node[draw, circle, inner sep=2pt] (layerthree\i) at (72*\i:\Radiusthree cm) {};
    }
    \draw (layerthree0) -- (layerthree1) -- (layerthree2) -- (layerthree3) -- (layerthree4) -- (layerthree0);
\foreach \i in {0,1,2,3,4} {
        \node at (72*\i:\Radiusthreeinv cm) {\pgfmathparse{\labelsthree[\i]}\pgfmathresult};
    }

    \foreach \i in {0,1,2,3,4} {
        \draw (layerone\i) -- (layertwo\i);
         \draw (layertwo\i) -- (layerthree\i);
        
    }
    \node[draw, circle, inner sep=2pt] (center) at (0,0) {};
    \node (centerlabel) at (0.6,0.4) {(0,0)};
    \foreach \i in {0,1,2,3,4} {
        \draw (center) -- (layerone\i);
    }
    
\end{tikzpicture}

} 
	\caption{Labeling of $CW_{3,5} $ with $Z_5\oplus  Z_6$}
	
\end{figure}
\begin{exm}\label{exm:CW_3,8}
	A $Z_4\oplus Z_2\oplus Z_6$-harmonious labeling of the closed web $CW_{3,8}$.
\end{exm}

\begin{figure}[h!]\label{fig:CW_{3,8}}
	\centering
	
	\scalebox{0.55}{
\begin{tikzpicture}
    \def\Radiusone{2.0}
    \def\Radiusoneinv{2.4}
    \def\Radiustwo{3.5}
    \def\Radiustwoinv{3.9}
    \def\Radiusthree{5}
    \def\Radiusthreeinv{5.7}

    \def\labelsone{{"(0,0,1)", "(2,0,1)", "(1,0,1)", "(2,1,1)", "(3,0,1)", "(3,1,1)", "(1,1,1)", "(0,1,1)"}}
    
    \def\labelstwo{{"(2,0,2)", "(1,0,2)", "(2,1,2)", "(3,0,2)", "(3,1,2)", "(1,1,2)", "(0,1,2)","(0,0,2)" }}
    \def\labelsthree{{ "(1,0,3)", "(2,1,3)", "(3,0,3)", "(3,1,3)", "(1,1,3)", "(0,1,3)","(0,0,3)", "(2,0,3)"}}

    \foreach \i in {0,...,7} {
        \node[draw, circle, inner sep=2pt] (layerone\i) at (45*\i:\Radiusone cm) {};
    }
    \draw (layerone0) -- (layerone1) -- (layerone2) -- (layerone3) -- (layerone4) -- (layerone5) -- (layerone6) -- (layerone7) -- (layerone0);

    \foreach \i in {0,...,7} {
        \node at (45*\i+15:\Radiusoneinv cm) {\pgfmathparse{\labelsone[\i]}\pgfmathresult};
    }

    \foreach \i in {0,...,7} {
        \node[draw, circle, inner sep=2pt] (layertwo\i) at (45*\i:\Radiustwo cm) {};
    }
    \draw (layertwo0) -- (layertwo1) -- (layertwo2) -- (layertwo3) -- (layertwo4) -- (layertwo5) -- (layertwo6) -- (layertwo7) -- (layertwo0);

    \foreach \i in {0,...,7} {
        \node at (45*\i+10:\Radiustwoinv cm) {\pgfmathparse{\labelstwo[\i]}\pgfmathresult};
    }

    \foreach \i in {0,...,7} {
        \node[draw, circle, inner sep=2pt] (layerthree\i) at (45*\i:\Radiusthree cm) {};
    }
    \draw (layerthree0) -- (layerthree1) -- (layerthree2) -- (layerthree3) -- (layerthree4) -- (layerthree5) -- (layerthree6) -- (layerthree7) -- (layerthree0);

    \foreach \i in {0,...,7} {
        \node at (45*\i:\Radiusthreeinv cm) {\pgfmathparse{\labelsthree[\i]}\pgfmathresult};
    }

    \foreach \i in {0,...,7} {
        \draw (layerone\i) -- (layertwo\i);
        \draw (layertwo\i) -- (layerthree\i);
    }

    \node[draw, circle, inner sep=2pt] (center) at (0,0) {};
    \node (centerlabel) at (0.94,0.3) {(0,0,0)};
    \foreach \i in {0,1,2,3,4,5,6,7} {
        \draw (center) -- (layerone\i);
    }
\end{tikzpicture}
} 
	\caption{Labeling of $CW_{3,8} $ with $Z_4\oplus  Z_2\oplus Z_6$}
\end{figure}
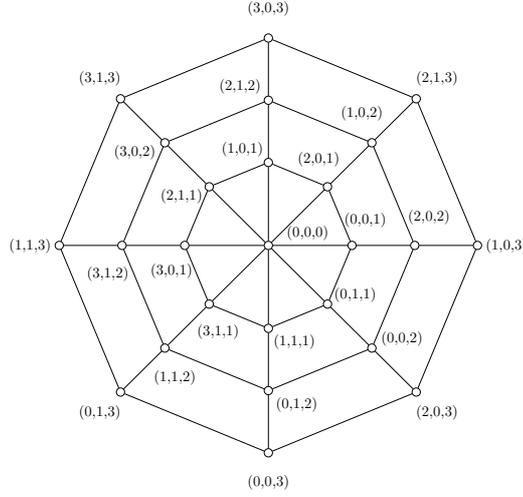

Recall that Seoud and Youssef~\cite{Seoud-Youssef} proved that closed webs $CW_{2,n}$ are harmonious for all odd $n$. A special case of the previous theorem generalizes their result as follows.

\begin{cor}\label{cor:harm-closed-webs}
	The closed web $CW_{m,n}$ is harmonious for all odd $n\geq3$ and any $m\geq2$.
\end{cor}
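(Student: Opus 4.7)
The plan is to derive this corollary as a direct instance of Theorem~\ref{thm:closed_web}, once the right subgroup of $Z_{2mn}$ is identified. By definition, ``harmonious'' means $Z_{2mn}$-harmonious, since $CW_{m,n}$ has exactly $2mn$ edges, so I would set $\Gamma=Z_{2mn}$ and check the three hypotheses of Theorem~\ref{thm:closed_web}.

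First, I would produce the required subgroup of order $n$ by taking the unique cyclic subgroup $H=\langle 2m\rangle_{2mn}$; since $Z_{2mn}$ is cyclic and $n\mid 2mn$, this subgroup exists, is isomorphic to $Z_n$, and has order exactly $n$. Next, because $n$ is odd, Theorem~\ref{thm:all-cycle}(1) immediately gives that $C_n$ admits an $H$-harmonious labeling, so the second hypothesis of Theorem~\ref{thm:closed_web} is met. Finally, the quotient $\Gamma/H$ has order $2m$ and, as a homomorphic image of a cyclic group, is itself cyclic, verifying the last hypothesis.

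Having checked all the assumptions, I would invoke Theorem~\ref{thm:closed_web} to conclude that $CW_{m,n}$ admits a $Z_{2mn}$-harmonious labeling, which by Definition~\ref{def:gamma-harmonious} is precisely a harmonious labeling. There is no real obstacle here — the only thing worth double-checking is the existence of a subgroup of order $n$ regardless of whether $\gcd(n,2m)=1$, but this is automatic in a cyclic group whenever the order divides the group order, so no coprimality assumption is needed. The odd parity of $n$ is used only to apply Theorem~\ref{thm:all-cycle} to the cycle $C_n$.
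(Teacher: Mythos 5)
Your proposal is correct and matches the paper's intended argument: the corollary is stated there as a direct special case of Theorem~\ref{thm:closed_web} with $\Gamma=Z_{2mn}$, exactly as you instantiate it (cyclic subgroup $H\cong Z_n$ exists since $n\mid 2mn$, $n$ odd gives the $H$-harmonious labeling of $C_n$ via Theorem~\ref{thm:all-cycle}, and $\Gamma/H$ is cyclic of order $2m$). Your explicit choice $H=\langle 2m\rangle_{2mn}$ just makes the routine verification concrete; there is no gap.
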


We again express the previous result of Theorem~\ref{thm:closed_web} in terms of the nested form of $\Gamma$.

\begin{cor}\label{cor:odd-closed-webs}
	Let $n\geq3$ be odd, $m\geq2$ and $\Gamma$ an Abelian group of order $2mn$ with a cyclic subgroup of order $2m$. Then the closed web $CW_{m,n}$ is $\Gamma$-harmonious.
\end{cor}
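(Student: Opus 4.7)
The plan is to mirror the deduction of Corollary~\ref{cor:odd-prisms} from Theorem~\ref{thm:all_prisms}, applying Theorem~\ref{thm:closed_web} in place of Theorem~\ref{thm:all_prisms}. Given $\Gamma$ of order $2mn$ with a cyclic subgroup $K$ of order $2m$, I would first invoke Theorem~\ref{thm:fact-group-isom-to-subgroup}(2) to produce a subgroup $H$ of $\Gamma$ satisfying $K\cap H=\{e\}$ and $\Gamma/H\cong K$. Counting orders, $|H|=|\Gamma|/|\Gamma/H|=2mn/(2m)=n$.

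Next I would check the two hypotheses of Theorem~\ref{thm:closed_web}. The quotient group $\Gamma/H\cong K$ is cyclic of order $2m$ by construction, so the second hypothesis holds. For the first, $H$ is an Abelian group of order $n$, and since $n$ is odd it automatically falls under case~(1) of Theorem~\ref{thm:all-cycle}; hence $C_n$ admits an $H$-harmonious labeling.

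With both conditions verified, Theorem~\ref{thm:closed_web} directly produces a $\Gamma$-harmonious labeling of $CW_{m,n}$, completing the argument.

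The only real point to watch is the order bookkeeping and the appeal to Theorem~\ref{thm:fact-group-isom-to-subgroup}(2), which converts the given cyclic \emph{subgroup} $K$ into a cyclic \emph{quotient} $\Gamma/H$ of the correct order; once this is done, no calculations remain and the result is a transparent corollary.
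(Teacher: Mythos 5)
Your argument is correct and is essentially the paper's own proof: the authors also deduce this corollary by using Theorem~\ref{thm:fact-group-isom-to-subgroup}(2) to turn the cyclic subgroup of order $2m$ into a cyclic quotient $\Gamma/H$ with $|H|=n$, note that odd $n$ puts $H$ under case~(1) of Theorem~\ref{thm:all-cycle}, and then apply Theorem~\ref{thm:closed_web}. No gaps to report.
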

\begin{proof}
	Similarly as in the proof of Corollary~\ref{cor:odd-prisms}, this follows from Theorems~\ref{thm:harm-groups},~\ref{thm:all-cycle}, and ~\ref{thm:closed_web}.
\end{proof}

For even $n$, the explicit expression is broken into several cases.

\begin{cor}\label{cor:even-closed-webs}
	Let $n\geq4$ be even, $m\geq2$ and $\Gamma$ an Abelian group of order $2mn$ written in the nested form as $\Gamma=Z_{n_1}\oplus Z_{n_2}\oplus\dots\oplus Z_{n_t}$ where $t\geq2$, with a cyclic subgroup of order $2m$. If

	\begin{itemize}
	\item[(1)] 	$n_1\equiv0\pmod{8m}$ and $n_2\equiv0\pmod{2}$, or
	\item[(2)] 	$n_1\equiv0\pmod{4m}$, $n_2\equiv0\pmod{2}$, $n_2\geq4$, or
	\item[(3)] 	$n_1\equiv0\pmod{2m}$, $n_2\equiv0\pmod{4}$  and $n_3\equiv0\pmod{2}$, or
	\item[(4)] 	$n_1\equiv0\pmod{2m}$, $n_3\equiv0\pmod{2}$ and $n_t>1$ is odd, 	
	\end{itemize}
	then the closed web $CW_{m,n}$ is $\Gamma$-harmonious.
\end{cor}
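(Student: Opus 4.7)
The plan is to follow the template laid out for the prism case in Corollary~\ref{cor:even-prisms}. Since $\Gamma$ has a cyclic subgroup of order $2m$, Observation~\ref{obs:cyclic subgroup} forces $2m\mid n_1$. Applying Theorem~\ref{thm:fact-group-isom-to-subgroup}(2) to this cyclic subgroup yields a complementary subgroup $H\leq\Gamma$ with $\Gamma/H$ cyclic of order $2m$, and we may arrange
$$
H \cong Z_{n'_1}\oplus Z_{n_2}\oplus \dots\oplus Z_{n_t},\qquad n'_1 = n_1/(2m),
$$
so that $|H|=n$. By Theorem~\ref{thm:closed_web}, it then suffices to verify that $H$ satisfies the hypothesis of Theorem~\ref{thm:all-cycle} for the even cycle $C_n$, which in view of Theorem~\ref{thm:harm-groups} amounts to exhibiting a subgroup of $H$ isomorphic to $Z_2\oplus Z_{2s}$ with $s\geq 2$.

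For the first three cases the required subgroup can be read off directly from the first two or three direct summands of $H$. In Case (1), $n'_1\equiv 0\pmod 4$ while $n_2$ is even, so $Z_{n'_1}\oplus Z_{n_2}$ contains $Z_4\oplus Z_2\cong Z_2\oplus Z_4$. In Case (2), $n'_1$ is even and $n_2\geq 4$ is even, so $Z_{n'_1}\oplus Z_{n_2}$ contains $Z_2\oplus Z_{n_2}$, which is of the form $Z_2\oplus Z_{2s}$ with $s=n_2/2\geq 2$. In Case (3), $n_2$ is divisible by $4$ and $n_3$ is even, so $Z_{n_2}\oplus Z_{n_3}$ contains $Z_4\oplus Z_2$.

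The hard part will be Case (4), where the parities of the summands only yield $Z_2\oplus Z_2$ inside the Sylow $2$-subgroup of $H$, which by itself is not of the required shape. The extra leverage must come from the odd tail: if $p$ is any odd prime dividing $n_t$, then $Z_{n_t}$ contains a copy of $Z_p$, and combining this $Z_p$ with a $Z_2$ subgroup of $Z_{n_3}$ via the Chinese Remainder Theorem gives $Z_{2p}$. Pairing it with a $Z_2$ subgroup of $Z_{n_2}$ (which is even because $n_3\mid n_2$ and $n_3$ is even) produces $Z_2\oplus Z_{2p}$ with $s=p\geq 3$, as required. Having produced the required subgroup in all four cases, Theorem~\ref{thm:all-cycle} provides an $H$-harmonious labeling of $C_n$, and Theorem~\ref{thm:closed_web} concludes that $CW_{m,n}$ is $\Gamma$-harmonious.
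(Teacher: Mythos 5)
Your proposal is correct and follows essentially the same route as the paper: reduce to finding $H\cong Z_{n_1/(2m)}\oplus Z_{n_2}\oplus\dots\oplus Z_{n_t}$ of order $n$ with $\Gamma/H$ cyclic of order $2m$, then verify in each case that $H$ contains a subgroup $Z_2\oplus Z_{2s}$, $s\geq2$, and invoke Theorems~\ref{thm:harm-groups}, \ref{thm:all-cycle} and~\ref{thm:closed_web}. The only (immaterial) difference is in Case (4), where the paper notes directly that $n_t\mid n_3$ forces $n_3=2s$ with $s\geq n_t\geq3$ and uses $Z_2\oplus Z_{n_3}$, whereas you assemble $Z_2\oplus Z_{2p}$ from $Z_{n_2}$, $Z_{n_3}$ and an odd prime factor of $n_t$ via the Chinese Remainder Theorem; both verifications are valid.
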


\begin{proof} 
	We again denote $n'_1=n_1/(2m)$. By similar reasoning as in the proof of Corollary~\ref{cor:even-prisms}, $\Gamma$ has a subgroup $H\approx Z_{n'_1}\oplus Z_{n_2}\oplus\dots\oplus Z_{n_t}$.
	
	In Case (1),  because $n_1\equiv0\pmod{8m}$, $n'_1\equiv0\pmod{4}$ and $H$ has a subgroup isomorphic to $Z_4\oplus Z_2$. Indeed, $\Gamma/H$ is cyclic of order $2m$. 	
	
	In Case (2), because $n_1\equiv0\pmod{4m}$, $n'_1\equiv0\pmod{2}$ and $H$ has a subgroup isomorphic to $Z_2\oplus Z_{2s}$ with $s\geq 2$ because $n_2\geq4$.

	In Case (3), because $n_2\equiv0\pmod{4}$  and $n_3\equiv0\pmod{2}$, $H$ has a subgroup isomorphic to $Z_4\oplus Z_2$. 
	
	Finally, in Case (4), $n_2$ is even because $n_3$ is even. Because $n_t>1$ is odd, we must have $n_3=2s$ where $s\geq n_t\geq3$, because $n_t$ divides $n_3$.

	Therefore, the proof again follows from Theorems~\ref{thm:harm-groups},~\ref{thm:all-cycle} and~\ref{thm:closed_web}.
\end{proof}

A \emph{generalized open web} $OW_{m,n}$ is the graph arising from the generalized closed web $CW_{m,n}$ by removing the edges of the bottom $n$-cycle $C^m$. The number of edges and order of $\Gamma$ is then $(2m-1)n$.

	The proof of the following theorem is just a slight modification of the proof of Theorem~\ref{thm:closed_web} and is left to the reader.

\begin{thm}\label{thm:open_web} 
	Let $n\geq3,m\geq2$ and $\Gamma$ be an Abelian group of order $(2m-1)n$. If $\Gamma$ has a subgroup $H$ of order $n$ that satisfies the assumptions of Theorem~\ref{thm:all-cycle} and the quotient group $\Gamma/H$ is cyclic, 
	then the generalized open web $OW_{m,n}$ admits a $\Gamma$-harmonious labeling. 
\end{thm}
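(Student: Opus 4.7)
The plan is to mimic the proof of Theorem~\ref{thm:closed_web} with the crucial difference that the cyclic quotient now has odd order $2m-1$ instead of $2m$, and there are no edges on the bottom cycle $C^m$ to account for. I keep the vertex labeling $f(x^0_0)=e$ and $f(x^j_i)=g(x_i)+j\beta$, where $g$ is an $H$-harmonious labeling of $C_n$ (which exists by hypothesis and Theorem~\ref{thm:all-cycle}) and $\beta+H$ is a generator of $\Gamma/H$. Injectivity of $f$ follows as before: vertices on the $j$-th layer lie in the coset $j\beta+H$, and the $m+1$ cosets $H,\beta+H,\dots,m\beta+H$ are distinct because $|\Gamma/H|=2m-1\geq m+1$ for $m\geq 2$; within a layer, injectivity is inherited from $g$.

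The edge labels split cleanly into three families, each forming one coset of $H$. First, the $n$ central edges $x^0_0 x^1_i$ contribute $e+g(x_i)+\beta$, yielding the coset $\beta+H$. Second, for each cycle $C^j$ with $j=1,2,\dots,m-1$ (there is no $C^m$ to consider), the cycle edge labels form $2j\beta+H$, exactly as in the closed web computation. Third, for the path edges between $C^j$ and $C^{j+1}$ with $j=1,2,\dots,m-1$, the labels form $(2j+1)\beta+H$. All of these calculations use the fact that the weights $w(x_ix_{i+1})$ from the $H$-harmonious labeling $g$ exhaust $H$.

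The coset representatives that appear are therefore
\[
\beta,\ 2\beta,\ 3\beta,\ \dots,\ (2m-2)\beta,\ (2m-1)\beta,
\]
a total of $2m-1$ consecutive multiples of $\beta$. Because $\beta+H$ generates the cyclic group $\Gamma/H$ of order $2m-1$, these are precisely the $2m-1$ distinct cosets of $H$ in $\Gamma$ (with $(2m-1)\beta+H=H$ absorbing the identity coset, contributed by the last layer of path edges). Counting $n$ edge labels per coset gives $(2m-1)n=|\Gamma|$ pairwise distinct labels that together exhaust $\Gamma$, so $f$ is $\Gamma$-harmonious.

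The main obstacle is purely notational: one must be careful that the ``bottom'' cycle disappears and that, as a result, the cyclic quotient has odd rather than even order, so the chain of coset representatives $\beta,2\beta,\dots,(2m-1)\beta$ wraps around to $H$ at exactly the right moment. Once this is observed the verification is a direct translation of the closed-web argument with one fewer cycle and with the generator's order adjusted from $2m$ to $2m-1$.
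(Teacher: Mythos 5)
Your proof is correct and is precisely the ``slight modification of the proof of Theorem~\ref{thm:closed_web}'' that the paper leaves to the reader: the same labeling $f(x^0_0)=e$, $f(x^j_i)=g(x_i)+j\beta$, with the central edges giving $\beta+H$, the cycles $C^1,\dots,C^{m-1}$ giving the cosets $2\beta+H,\dots,(2m-2)\beta+H$, and the path layers giving $3\beta+H,\dots,(2m-1)\beta+H=H$, which together exhaust the cyclic quotient of order $2m-1$. Your explicit verification of vertex injectivity (using $2m-1\geq m+1$) is a small addition the paper's closed-web argument also leaves implicit, and it is correct.
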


\begin{exm}\label{exm:OW_3,5}
	A $Z_5\oplus Z_5$-harmonious labeling of the open web $OW_{3,5}$.
\end{exm}

\begin{figure}[h!]\label{fig:OW_{3,5}}
	\centering
	
	\scalebox{0.55}{\begin{tikzpicture}
    \def\Radiusone{2.0}
    \def\Radiusoneinv{2.3}
    \def\Radiustwo{3.5}
    \def\Radiustwoinv{3.8}
    \def\Radiusthree{5}
    \def\Radiusthreeinv{5.5}  
 \def\labelsone{{"(0,1)", "(1,1)", "(2,1)", "(3,1)", "(4,1)"}}
\def\labelstwo{{"(1,2)", "(2,2)", "(3,2)", "(4,2)","(0,2)" }}
\def\labelsthree{{"(2,3)", "(3,3)", "(4,3)", "(0,3)", "(1,3)"}}

    \foreach \i in {0,1,2,3,4} {
        \node[draw, circle, inner sep=2pt] (layerone\i) at (72*\i:\Radiusone cm) {};
    }
    \draw (layerone0) -- (layerone1) -- (layerone2) -- (layerone3) -- (layerone4) -- (layerone0);

\foreach \i in {0,1,2,3,4} {
        \node at (72*\i+12:\Radiusoneinv cm) {\pgfmathparse{\labelsone[\i]}\pgfmathresult};
    }

    \foreach \i in {0,1,2,3,4} {
        \node[draw, circle, inner sep=2pt] (layertwo\i) at (72*\i:\Radiustwo cm) {};
    }
    \draw (layertwo0) -- (layertwo1) -- (layertwo2) -- (layertwo3) -- (layertwo4) -- (layertwo0);
\foreach \i in {0,1,2,3,4} {
        \node at (72*\i+7:\Radiustwoinv cm) {\pgfmathparse{\labelstwo[\i]}\pgfmathresult};
    }

    \foreach \i in {0,1,2,3,4} {
        \node[draw, circle, inner sep=2pt] (layerthree\i) at (72*\i:\Radiusthree cm) {};
    }
   
\foreach \i in {0,1,2,3,4} {
        \node at (72*\i:\Radiusthreeinv cm) {\pgfmathparse{\labelsthree[\i]}\pgfmathresult};
    }

    \foreach \i in {0,1,2,3,4} {
        \draw (layerone\i) -- (layertwo\i);
         \draw (layertwo\i) -- (layerthree\i);
        
    }
    \node[draw, circle, inner sep=2pt] (center) at (0,0) {};
    \node (centerlabel) at (0.6,0.4) {(0,0)};
    \foreach \i in {0,1,2,3,4} {
        \draw (center) -- (layerone\i);
    }
    
\end{tikzpicture}

} 
	\caption{Labeling of $OW_{3,5} $ with $Z_5\oplus  Z_5$}
\end{figure}
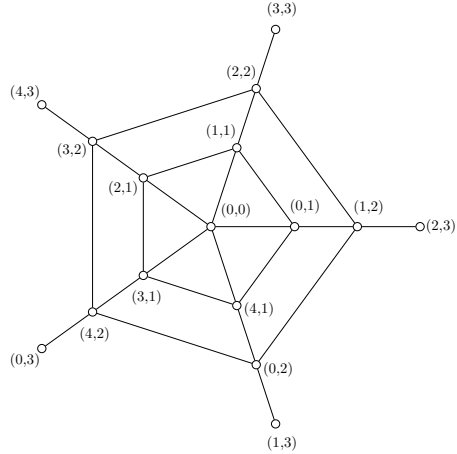
\begin{exm}\label{exm:OW_3,8}
	A $Z_4\oplus Z_2\oplus Z_5$-harmonious labeling of the open web $OW_{3,5}$.
\end{exm}

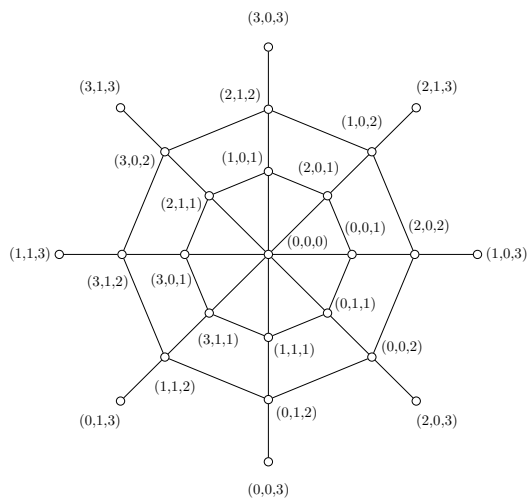
\begin{figure}[h!]\label{fig:OW_{3,8}}
	\centering
	
	\scalebox{0.55}{
\begin{tikzpicture}
        \def\Radiusone{2.0}
    \def\Radiusoneinv{2.4}
    \def\Radiustwo{3.5}
    \def\Radiustwoinv{3.9}
    \def\Radiusthree{5}
    \def\Radiusthreeinv{5.7}

    \def\labelsone{{"(0,0,1)", "(2,0,1)", "(1,0,1)", "(2,1,1)", "(3,0,1)", "(3,1,1)", "(1,1,1)", "(0,1,1)"}}
    
    \def\labelstwo{{"(2,0,2)", "(1,0,2)", "(2,1,2)", "(3,0,2)", "(3,1,2)", "(1,1,2)", "(0,1,2)","(0,0,2)" }}
   \def\labelsthree{{ "(1,0,3)", "(2,1,3)", "(3,0,3)", "(3,1,3)", "(1,1,3)", "(0,1,3)","(0,0,3)", "(2,0,3)"}}

    \foreach \i in {0,...,7} {
        \node[draw, circle, inner sep=2pt] (layerone\i) at (45*\i:\Radiusone cm) {};
    }
    \draw (layerone0) -- (layerone1) -- (layerone2) -- (layerone3) -- (layerone4) -- (layerone5) -- (layerone6) -- (layerone7) -- (layerone0);

    \foreach \i in {0,...,7} {
        \node at (45*\i+15:\Radiusoneinv cm) {\pgfmathparse{\labelsone[\i]}\pgfmathresult};
    }

    \foreach \i in {0,...,7} {
        \node[draw, circle, inner sep=2pt] (layertwo\i) at (45*\i:\Radiustwo cm) {};
    }
    \draw (layertwo0) -- (layertwo1) -- (layertwo2) -- (layertwo3) -- (layertwo4) -- (layertwo5) -- (layertwo6) -- (layertwo7) -- (layertwo0);

    \foreach \i in {0,...,7} {
        \node at (45*\i+10:\Radiustwoinv cm) {\pgfmathparse{\labelstwo[\i]}\pgfmathresult};
    }

    \foreach \i in {0,...,7} {
        \node[draw, circle, inner sep=2pt] (layerthree\i) at (45*\i:\Radiusthree cm) {};
    }

    \foreach \i in {0,...,7} {
        \node at (45*\i:\Radiusthreeinv cm) {\pgfmathparse{\labelsthree[\i]}\pgfmathresult};
    }

    \foreach \i in {0,...,7} {
        \draw (layerone\i) -- (layertwo\i);
        \draw (layertwo\i) -- (layerthree\i);
    }

    \node[draw, circle, inner sep=2pt] (center) at (0,0) {};
    \node (centerlabel) at (0.94,0.3) {(0,0,0)};
    \foreach \i in {0,1,2,3,4,5,6,7} {
        \draw (center) -- (layerone\i);
    }
\end{tikzpicture}
} 
	\caption{Labeling of $OW_{3,8} $ with $Z_4\oplus  Z_2\oplus Z_5$}
\end{figure}



Seoud and Youssef~\cite{Seoud-Youssef} and Gnanajothi~\cite{Gnanajothi} proved that open webs $OW_{m,n}$ are harmonious for all odd $n$ and $m=2$ and 3, respectively. We state a generalization following directly from the previous theorem as a corollary.

\begin{cor}\label{cor:harm-open-webs}
	The open web $OW_{m,n}$ is harmonious for all odd $n$ and any $m\geq2$.
\end{cor}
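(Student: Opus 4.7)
The plan is to derive this corollary as an immediate specialization of Theorem~\ref{thm:open_web} by taking $\Gamma=Z_{(2m-1)n}$ (recall that ``harmonious'' means $Z_q$-harmonious for $q$ equal to the number of edges, and here $q=(2m-1)n$). So the goal reduces to exhibiting a subgroup $H\le\Gamma$ of order $n$ that satisfies the hypotheses of Theorem~\ref{thm:all-cycle}, such that the quotient $\Gamma/H$ is cyclic.

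First I would pick $H=\langle 2m-1\rangle_{(2m-1)n}$, the unique subgroup of $Z_{(2m-1)n}$ of order $n$; it exists and is unique because in a cyclic group there is exactly one subgroup of each order dividing the group order, and $n\mid (2m-1)n$. This $H$ is itself cyclic of order $n$ (isomorphic to $Z_n$), and since $n$ is odd, case~(1) of Theorem~\ref{thm:all-cycle} applies, so $C_n$ admits an $H$-harmonious labeling.

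Next I would verify the quotient condition. The quotient $\Gamma/H$ has order $(2m-1)n/n=2m-1$, and is a quotient of a cyclic group, hence cyclic. Thus both hypotheses of Theorem~\ref{thm:open_web} are satisfied with $\Gamma=Z_{(2m-1)n}$ and $H=\langle 2m-1\rangle$, and so $OW_{m,n}$ admits a $Z_{(2m-1)n}$-harmonious labeling, which by Definition~\ref{def:gamma-harmonious} is exactly a harmonious labeling.

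There is essentially no obstacle here; the only thing to be careful about is confirming that the cyclic group of order $(2m-1)n$ does have a (unique) subgroup of order $n$ and that the quotient is cyclic of order $2m-1$, both of which are standard facts for cyclic groups. The entire argument is two or three lines once Theorem~\ref{thm:open_web} is in hand.
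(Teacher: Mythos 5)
Your proposal is correct and matches the paper's (implicit) argument: the paper treats this as an immediate consequence of Theorem~\ref{thm:open_web} by taking $\Gamma=Z_{(2m-1)n}$, whose unique (cyclic) subgroup $H$ of odd order $n$ satisfies Theorem~\ref{thm:all-cycle} and has cyclic quotient of order $2m-1$. Your explicit choice $H=\langle 2m-1\rangle$ and the verification of both hypotheses is exactly this reasoning, just written out in full.
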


For convenience, we again express the result of Theorem~\ref{thm:open_web} in a more explicit form, describing the group $\Gamma$ in the nested form.

\begin{cor}\label{cor:odd-open-webs}
	Let $n\geq3$ be odd, $m\geq2$ and $\Gamma$ an Abelian group of order $(2m-1)n$ with a cyclic subgroup of order $2m-1$. Then the open web $OW_{m,n}$ admits a $\Gamma$-harmonious labeling.    
\end{cor}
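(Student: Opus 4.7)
The plan is to mirror the proofs of Corollary~\ref{cor:odd-prisms} and Corollary~\ref{cor:odd-closed-webs}, since the statement is the analogue for open webs and follows in the same way from Theorem~\ref{thm:open_web}. The goal is simply to verify that the hypotheses of Theorem~\ref{thm:open_web} hold, namely that $\Gamma$ contains a subgroup $H$ of order $n$ admitting an $H$-harmonious labeling of $C_n$, and that the quotient $\Gamma/H$ is cyclic.

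First I would apply Theorem~\ref{thm:fact-group-isom-to-subgroup}(2) to the given cyclic subgroup $K$ of order $2m-1$: this produces a subgroup $H\leq\Gamma$ with $K\cap H=\{e\}$ and $\Gamma/H\cong K$. In particular $\Gamma/H$ is cyclic of order $2m-1$, and by Lagrange $|H|=|\Gamma|/|K|=(2m-1)n/(2m-1)=n$. Next, since $n$ is odd, Theorem~\ref{thm:all-cycle} (case 1) guarantees that $C_n$ is $H$-harmonious, so $H$ satisfies the hypothesis required by Theorem~\ref{thm:open_web}.

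With both hypotheses in place — an order-$n$ subgroup $H$ of $\Gamma$ meeting the assumptions of Theorem~\ref{thm:all-cycle}, and a cyclic quotient $\Gamma/H$ — Theorem~\ref{thm:open_web} directly produces a $\Gamma$-harmonious labeling of $OW_{m,n}$, completing the proof.

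There is really no obstacle here: everything is assembled from prior results. The only point to be careful about is the direction of Theorem~\ref{thm:fact-group-isom-to-subgroup}, namely that one needs part (2) (subgroup $K$ lifts to a quotient) rather than part (1), exactly as in the corresponding step of Corollary~\ref{cor:odd-prisms}.
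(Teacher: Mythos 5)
Your proposal is correct and follows essentially the same route as the paper: the paper's proof of this corollary is the one-line remark that it follows from Theorems~\ref{thm:harm-groups}, \ref{thm:all-cycle} and~\ref{thm:open_web}, with the details being exactly the argument you give (mirroring Corollary~\ref{cor:odd-prisms}: apply Theorem~\ref{thm:fact-group-isom-to-subgroup}(2) to get $H$ of order $n$ with $\Gamma/H\cong K$ cyclic, then use oddness of $n$ and Theorem~\ref{thm:all-cycle}). No issues.
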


\begin{proof}
	This follows directly from Theorems~\ref{thm:harm-groups},~\ref{thm:all-cycle} and ~\ref{thm:open_web}.
\end{proof}

\begin{cor}\label{cor:even-open-webs}
	Let $n\geq4$ be even, $m\geq2$ and $\Gamma$ an Abelian group of order $(2m-1)n$  with a cyclic subgroup of order $2m-1$ written in the nested form as $\Gamma=Z_{n_1}\oplus Z_{n_2}\oplus\dots\oplus Z_{n_t}$, where $t\geq2$. If
	\begin{itemize}
	\item[(1)] 	$n_1\equiv0\pmod4$ and $n_2$ is even, or 
	\item[(2)]	$n_2\geq4$ is even,
	\end{itemize}
	then the web $OW_{m,n}$ admits a $\Gamma$-harmonious labeling.    
\end{cor}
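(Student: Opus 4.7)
The plan is to reduce this corollary to Theorem~\ref{thm:open_web} by extracting inside $\Gamma$ a subgroup $H$ of order $n$ that satisfies the assumptions of Theorem~\ref{thm:all-cycle} and has cyclic quotient $\Gamma/H$. Because $|\Gamma|=(2m-1)n$ is exactly the same expression that appears in Corollary~\ref{cor:even-prisms}, the extraction of $H$ used there transports to our situation without change; only the final invocation differs, replacing Theorem~\ref{thm:all_prisms} by Theorem~\ref{thm:open_web}.

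First, using the given cyclic subgroup $K$ of order $2m-1$, I would invoke Theorem~\ref{thm:fact-group-isom-to-subgroup}(2) to produce a subgroup $H\leq\Gamma$ with $K\cap H=\{e\}$ and $\Gamma/H\cong K$. This forces $|H|=n$ and makes $\Gamma/H$ cyclic of order $2m-1$, so the quotient hypothesis of Theorem~\ref{thm:open_web} is immediate. Observation~\ref{obs:cyclic subgroup} then gives $(2m-1)\mid n_1$, and writing $H$ in nested form yields $H\cong Z_{n_1'}\oplus Z_{n_2}\oplus\dots\oplus Z_{n_t}$ with $n_1'=n_1/(2m-1)$. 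Since $2m-1$ is odd, $\gcd(2m-1,2)=1$, so the $2$-adic valuations of $n_1$ and $n_1'$ coincide; this is the key quantitative observation.

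Second, I would verify in each case that $H$ contains a subgroup isomorphic to $Z_2\oplus Z_{2s}$ with $s\geq 2$, so that condition (2) of Theorem~\ref{thm:all-cycle} applies to $C_n$ with the group $H$. In Case (1), $n_1\equiv 0\pmod 4$ forces $n_1'\equiv 0\pmod 4$, which, combined with $n_2$ even, places a copy of $Z_4\oplus Z_2$ inside $H$. In Case (2), $n_2\geq 4$ being even forces $n_1$, hence $n_1'$, to be even as well, and then $Z_2\oplus Z_{n_2}\leq H$ has the required form.

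With $H$ in hand, Theorem~\ref{thm:all-cycle} produces an $H$-harmonious labeling of $C_n$, and Theorem~\ref{thm:open_web} then delivers the desired $\Gamma$-harmonious labeling of $OW_{m,n}$. The only place where anything could go wrong is the inheritance of the $2$-part by $n_1'$, and this is immediate because $2m-1$ is odd; there is no substantive obstacle beyond this routine bookkeeping.
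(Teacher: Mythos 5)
Your proposal is correct and follows essentially the same route as the paper: the paper proves this corollary by declaring its proof identical to that of Corollary~\ref{cor:even-prisms}, i.e., use Observation~\ref{obs:cyclic subgroup} and Theorem~\ref{thm:fact-group-isom-to-subgroup}(2) to get $H\cong Z_{n_1/(2m-1)}\oplus Z_{n_2}\oplus\dots\oplus Z_{n_t}$ with cyclic quotient, check in each case that $H$ contains $Z_2\oplus Z_{2s}$, $s\geq2$, and then apply Theorems~\ref{thm:all-cycle} and~\ref{thm:open_web}. Your explicit remark that $2m-1$ is odd, so $n_1'$ inherits the $2$-part of $n_1$, is exactly the (implicit) justification the paper relies on.
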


The proofs are identical to the proofs of Corollaries~\ref{cor:odd-prisms} and~\ref{cor:even-prisms}, respectively.

\section{Conclusion}\label{sec:conclusion}

We explored $\Gamma$-harmonious labeling, a generalization of a well-known notion of harmonious labeling, for several families of cycles-related graphs. In most cases, our results are only partial. Therefore, we state some open problems here.

There are two main directions for further research. One is dealing with graphs based on even cycles where the group $\Gamma$ does not contain a subgroup isomorphic to $Z_4\oplus Z_2$. 

\begin{prb}\label{prb:even-cycles}
	Determine for what Abelian groups $\Gamma$ not containing a subgroup of order $n$ isomorphic to $Z_2\oplus Z_{2s}, s\geq2$ there exists a $\Gamma$-harmonious labeling of generalized prisms $Y_{m,n}$, open and closed webs $OW_{m,n}$, $CW_{m,n}$ and superwheels $SW_{k,n}$ for even $n$.
\end{prb}

For Dutch windmill graphs, we only covered  the cases when the number of cycles is odd. When the cycle length was even, we have an additional restriction on the quotient group. We propose the following two problems on Dutch windmill graphs.

\begin{prb}\label{prb:windmill1}
	Determine for what Abelian groups $\Gamma$ there exists a $\Gamma$-harmonious labeling of Dutch windmill graphs $D^m_{n}$ for even $m$.
\end{prb}

\begin{prb}\label{prb:windmill2}
	Determine for what Abelian groups $\Gamma$ there exists a $\Gamma$-harmonious labeling of Dutch windmill graphs $D^m_{n}$ for odd $m$ and even $n$ when there does not exist a subgroup $ H$ of order $m$ such that $K=\Gamma/H$ satisfies the conditions in Theorem \ref{thm:all-cycle}.
\end{prb}

The other direction concerns graphs $Y_{m,n}$ and $OW_{m,n}$ when $\Gamma$ does not have a cyclic subgroup of order $2m-1$ or $CW_{m,n}$ when $\Gamma$ does not have a cyclic subgroup of order $2m$.

\begin{prb}\label{prb:no-cyclic}
	Determine for what Abelian groups $\Gamma$ not containing a cyclic subgroup 
	\begin{enumerate}
	\item 
		of order $2m-1$ there exists a $\Gamma$-harmonious labeling of generalized prisms $Y_{m,n}$ and open webs $OW_{m,n}$
	\item 
		of order $2m$ there exists a $\Gamma$-harmonious labeling of closed webs $CW_{m,n}$.
	\end{enumerate}
\end{prb}

We also want to express out thanks to Alexa Hedtke and Johnathan Koch for their contribution to our results in the early stages of this project.
Johnathan Koch wrote a program in python which we used to check if a $\Gamma$-harmonious labeling existed for a given graph and if such labeling existed, it gave us the exact labeling.
The program helped us to find counterexamples and investigate patterns. The program can be found here \cite{John}.

\vskip1cm

\noindent

\newpage


\end{document}